\newtheorem{thm}{Theorem}[section]
\newtheorem{lma}[thm]{Lemma}
\newtheorem{cor}[thm]{Corollary}
\newtheorem{prp}[thm]{Proposition}
\renewcommand{\tilde}{\widetilde}
\theoremstyle{remark}
\newtheorem{rmk}[thm]{Remark}
\theoremstyle{definition}
\newtheorem{dfn}[thm]{Definition}
\newtheorem{exm}[thm]{Example}
\newcommand{\Z}{\mathbb{Z}}
\newcommand{\C}{\mathbb{C}}
\newcommand{\OP}{\operatorname}
\newcommand{\Sym}{\OP{Sym}}
\newcommand{\Symp}{\mathrm{Symp}}
\newcommand{\Mod}{\mathrm{Mod}}
\newcommand{\id}{\mathrm{id}}
\newcommand{\inv}{\mathrm{inv}}
\newcommand{\Imin}{I_\mathrm{min}}
\mathchardef\ordinarycolon\mathcode`\:
\newcommand*{\da@rightarrow}{\mathchar"0\hexnumber@\symAMSa 4B }
\newcommand*{\da@leftarrow}{\mathchar"0\hexnumber@\symAMSa 4C }
\newcommand*{\xdashrightarrow}[2][]{%
  \mathrel{%
    \mathpalette{\da@xarrow{#1}{#2}{}\da@rightarrow{\,}{}}{}%
  }%
}
\newcommand{\xdashleftarrow}[2][]{%
  \mathrel{%
    \mathpalette{\da@xarrow{#1}{#2}\da@leftarrow{}{}{\,}}{}%
  }%
}
\newcommand*{\da@xarrow}[7]{%
  % #1: below
  % #2: above
  % #3: arrow left
  % #4: arrow right
  % #5: space left 
  % #6: space right
  % #7: math style 
  \sbox0{$\ifx#7\scriptstyle\scriptscriptstyle\else\scriptstyle\fi#5#1#6\m@th$}%
  \sbox2{$\ifx#7\scriptstyle\scriptscriptstyle\else\scriptstyle\fi#5#2#6\m@th$}%
  \sbox4{$#7\dabar@\m@th$}%
  \dimen@=\wd0 %
  \ifdim\wd2 >\dimen@
    \dimen@=\wd2 %   
  \fi
  \count@=2 %
  \def\da@bars{\dabar@\dabar@}%
  \@whiledim\count@\wd4<\dimen@\do{%
    \advance\count@\@ne
    \expandafter\def\expandafter\da@bars\expandafter{%
      \da@bars
      \dabar@ 
    }%
  }%  
  \mathrel{#3}%
  \mathrel{%   
    \mathop{\da@bars}\limits
    \ifx\\#1\\%
    \else
      _{\copy0}%
    \fi
    \ifx\\#2\\%
    \else
      ^{\copy2}%
    \fi
  }%   
  \mathrel{#4}%
}
\newcommand{\bq}{\begin{equation}}
\newcommand{\eq}{\end{equation}}
\newcommand{\Fuk}{\mathcal{F}uk}
\newcommand{\AutFuk}{Aut\Fuk}
\title{On symplectic stabilisations and mapping classes}
\begin{document}

\author{Ailsa Keating}

\begin{abstract}

We are interested in comparing properties of symplectic mapping class groups of symplectic manifolds of dimension four or higher with properties of classical mapping class groups of surfaces. 
For $n \geq 2$, consider a configuration of Lagrangian $S^n$s in a Weinstein domain $M^{2n}$. If it is analogous, in some sense that we make precise, to a configuration of exact Lagrangian $S^1$s on a surface $\Sigma$, we show that any relation between Dehn twists in the $S^n$s must also hold between the $S^1$s. Such analogous pairs of configurations include plumbings of $T^\ast S^1$s and $T^\ast S^n$s with the same plumbing graph, and vanishing cycles for a two-variable singularity and for its stabilisation. 
We give a number of corollaries for subgroups of symplectic mapping class groups. 
\end{abstract}

\maketitle

%\tableofcontents

\section{Introduction and statement of the main theorem}

Given an $A_n$ chain of Lagrangian spheres in a Liouville domain $M$, the associated Dehn twists generate a braid group in the symplectic mapping class group of $W$, $\pi_0 \Symp^c(M)$ \cite{Seidel-Thomas, Khovanov-Seidel}. This generalises the classical story for braid groups generated by Dehn twists on Riemann surfaces, including (real) two-dimensional Liouville domains. We also know that a pair of Dehn twists on a Liouville domain of arbitrary  high dimension generate a free subgroup of its symplectic mapping class group under analogous conditions to the two-dimensional case \cite{Keating}.

These motivate the following question: how do properties of symplectic mapping class groups of symplectic manifolds of dimension at least four compare with those of the `classical' two-dimensional mapping class groups? In the present paper, to make this precise,  we focus on cases where there is a clear basis for comparison: some pairs of symplectic manifolds $(M^{2n}, \Sigma^2)$ for which there are configurations of Lagrangian spheres in $M$ and $\Sigma$, say, respectively, $V_i$ and $v_i$,  $i=1,\ldots, k$, with analogous intersection patterns between the $V_i$ and the $v_i$, in a sense that will be defined below. We want to compare relations between the Dehn twists $\tau_{V_i}$, in the symplectic mapping class group $\pi_0 \Symp^c (M)$, and the Dehn twists $\tau_{v_i}$ in $\pi_0 \Symp^c(\Sigma) = \Mod(\Sigma, \partial)$, the mapping class group of $\Sigma$. We also restrict ourselves to Liouville domains (in fact, Weinstein domains), for which Floer and Fukaya-theoretic tools are much further developed.

%Motivation: given an $A_n$ chain of spheres in a Weinstein manifold, get a free action of the braid group, by Seidel-Thomas and Khovanov-Seidel. In contrast, for $E_6$, get different actions: there are relations on the Riemann surface that don't hold in higher dimensions.

What are pairs $(M, \Sigma)$ with analogous configurations of exact Lagrangians spheres? Let's start with two classes of examples.

\begin{exm} \emph{Milnor fibres of stabilisations of two-variable singularities.}

Let $f: \C^2 \to \C$ be a two-variable isolated singularity. Pick a Morsification $\tilde{f}: \C^2 \to \C$, and let $a_1, \ldots, a_\mu$ be the critical values of $\tilde{f}$; let $a$ be a regular value. Given a distinguished collection of 
vanishing paths $\gamma_i$ from $a_i$ to $a$, $i=1, \ldots, \mu$, we get a collection $v_1, \ldots, v_\mu$ of vanishing cycles in the Milnor fibre $M_f$ of $f$, a Weinstein domain; these are exact Lagrangian spheres. 

Now consider the stabilisation of $f$, $$F: \C^{2+k} \to \C \,\, , \,\,F(x, y, z_1, \ldots, z_k) = f(x,y) + z_1^2 + \ldots + z_k^2.$$ 
Then $\tilde{f}(x,y) + z_1^2 + \ldots + z_k^2$ is a Morsification of $F$, with, by construction, critical values $a_i$. Let $V_i$ be the vanishing cycle in the Milnor fibre $M_F$  associated to the path $\gamma_i$. We will want to compare properties of Dehn twists in the $V_i$ and the $v_i$. 
\end{exm}

\begin{exm} \emph{Plumbings of $T^\ast S^n$'s.}

Pick a plumbing $\Sigma$ of copies of $T^\ast S^1$. This is determined by a decorated graph $G$. Vertices correspond to $T^\ast S^1$s, and each edge to a plumbing gluing. 
There are two sets of decorations --  first, for each edge, an orientation: for fixed choices of orientation on the $S^1$s, this records whether the plumbing gluing is $(p_1, q_1) = (-q_2, p_2)$ or $(q_2, -p_2)$, where the zero-section coordinates $q_i$ have positive orientation. 
(Notice that  the choice of orientation of each of the $S^1$s is auxiliary: the resulting plumbing does not depend on these. In particular, changing the orientations of all of the edges coming out of a fixed vertex does not change the resulting symplectic manifold, which means that this data only matters in the presence of cycles in $G$.)  
The second decoration is only needed for plumbings of $T^\ast S^1$s (and not in higher dimensions):  for each vertex,  a cyclic ordering of the edges incident to it; given an orientation of the corresponding $S^1$, this gives the order in which to perform the gluings as one travels along the meridional $S^1$.

 Given such a $G$, together with the first set of decorations (an orientation of each edge), for any fixed choice of $n$, one can also construct instead a plumbing of copies of $T^\ast S^n$, say $M$. (The second decoration is no longer needed: $S^n  \backslash \{pt \sqcup pt \}$ is only disconnected in the case $n=1$.) Such pairs $(M, \Sigma)$ will also fall within our framework.
\end{exm}

We will see that these two examples are both special cases of what we call Lefschetz stabilisations:

\begin{dfn}
Given a Liouville domain $F^{2n}$ and a collection of exact Lagrangian spheres $v_1, \ldots, v_k$ in $F$,
 a \emph{Lefschetz one-stabilisation} of $(F, \{ v_i \})$ 
is a pair $(M^{2n+2}, \{ V_i \} )$ consisting of a Liouville domain $M$ and a collection of Lagrangian spheres $V_i$ in $M$, $i=1, \ldots, k$, such that:
\begin{itemize}
\item $M$ is the total space of a Lefschetz fibration (with corners smoothed) with fibre $F$ and base a complex disc, say $\pi: M \to \mathbb{D}$. 
\item $\pi$ has $2k$ critical points, and distinguished collection of vanishing cycles $v_{\sigma(1)}$, \ldots, $v_{\sigma(k)}$, $v_{\sigma(1)}$, \ldots, $v_{\sigma(k)}$, for some permutation $\sigma$ of $\{ 1, \ldots, k \}$. 
\item $V_{\sigma(i)}$ is the matching cycle corresponding to the matching path between the $i^{th}$ and $(i+k)^{th}$ critical points. 
\end{itemize}
Somewhat abusively, we will also call $(F, M; \{ (v_i, V_i ) \}_{i=1, \ldots, k})$ as above a Lefschetz one-stabilisation.
 \end{dfn}

After deformation, we can arrange for the critical values to be at the $(2k)^{th}$ roots of unity, with vanishing paths the straight line segments to the origin. We will assume thereafter that this is the case. Note that the $V_i$ only intersect at $\pi^{-1}(0) = F$.

An example of a one-stabilisation of a two-dimensional Liouville domain is given in Figure \ref{fig:stab1}.

\begin{figure}[htb]
\begin{center}
\includegraphics[scale=0.38]{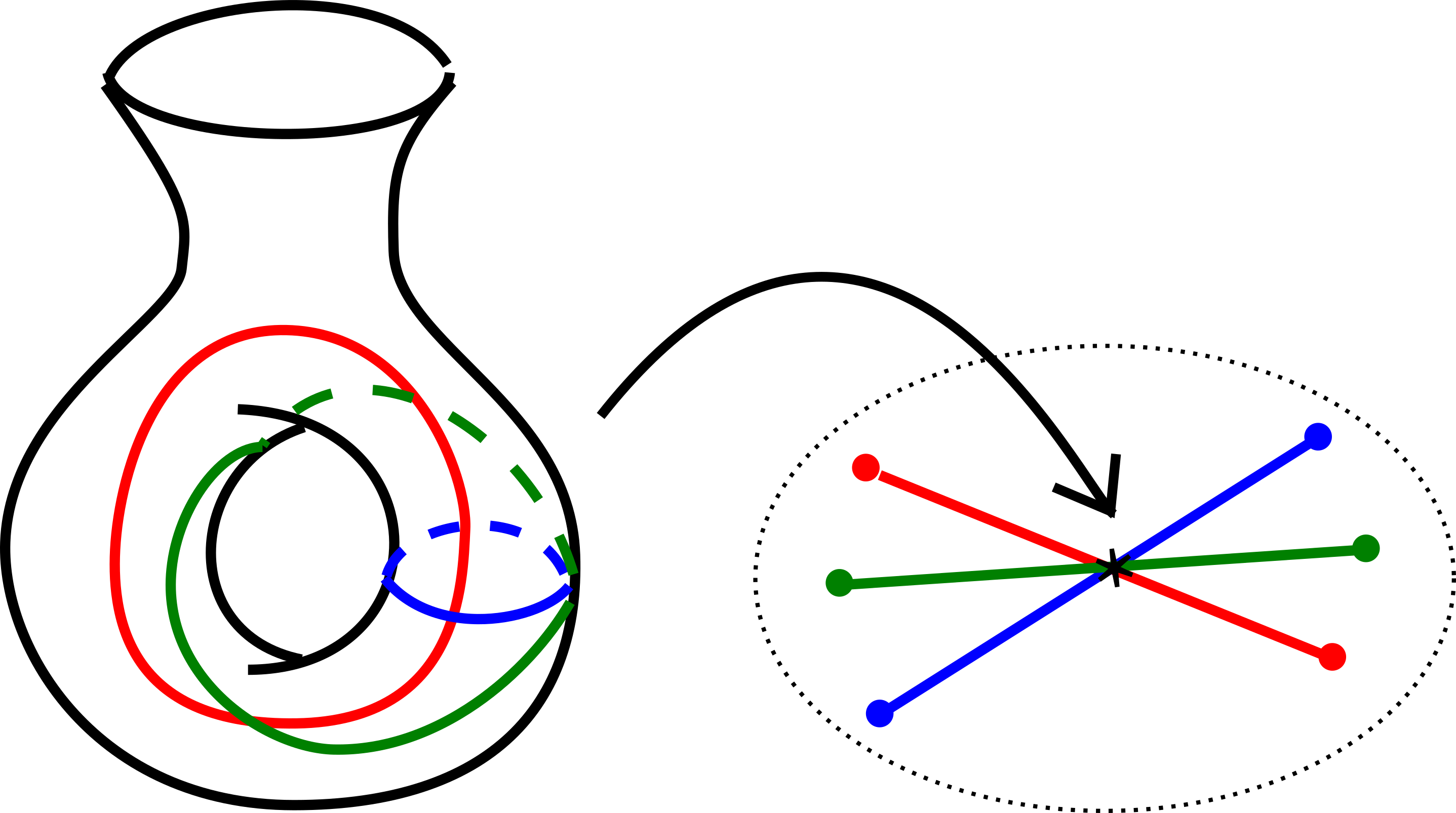}
% or [scale=0.85]
\caption{Example of a Lefschetz one-stabilisation. The $v_i$ are the curves on the central fibre, and the $V_i$ the corresponding matching cycles.}
\label{fig:stab1}
\end{center}
\end{figure}

\begin{dfn}
Let  $(F^{2l}, \{ v_i \}_{i=1, \ldots, k})$ and $(M^{2n}, \{V_i\}_{i=1, \ldots, k})$
 be Liouville domains with collections of $k$ exact Lagrangian spheres. We say that $(F, M; \{ (v_i, V_i ) \}  )$ is a Lefschetz $(n-l)$--stabilisation (or just a Lefschetz stabilisation) if there is a sequence of Lefschetz one-stabilisations starting with $(F, \{ v_i \})$ and ending with $(M, \{ V_i \})$. 
\end{dfn}

\begin{rmk}
%\begin{itemize} \item
The Lagrangian spheres in each dimension are effectively unordered: in particular, we do not ask that we use the same permutation for two successive Lefschetz fibrations. 
\end{rmk}

\begin{rmk}
%\item 
We are allowing repeats of the same Lagrangian sphere in the collection $V_i$, although this will not be particularly interesting for us. Of course, the multiplicity of a given object in $\{ v_i \}$ changes the possible Lefschetz stabilisations.
%\end{itemize}
\end{rmk}

Let's check that our two classes of examples do indeed fall within this framework.

\begin{exm} \emph{Milnor fibres of stabilisations of singularities.}

As before, let $f: \C^n \to \C$ be an isolated hypersurface singularity, $\tilde{f}$ a Morsification of it, and $F: \C^{n+1} \to \C$ the stabilisation of $f$: $F( \mathbf{z}, w) = f( \mathbf{z}) + w^2$. Assuming the perturbation $\tilde{f}$ was chosen to be sufficiently small, there is a regular value $a$ of $f$ such that the Milnor fibres $M_f$ and $M_F$ are naturally Liouville submanifolds of $\{ \tilde{f}(\mathbf{z}) = a \}$ and $W:=\{ \tilde{f}(\mathbf{z}) + w^2 = a \}$, respectively. Now the map $\pi: W \to \C$, $(\mathbf{z}, w) \to w$ induces a suitable Lefschetz fibration on $M_F$. 
\end{exm}

\begin{exm} \emph{Plumbings of $T^\ast S^n$s.}

For notational simplicity, let's restrict ourselves to the case of a plumbing along a decorated graph $G$ consisting of a single cycle. 
Let $e_1, \ldots, e_n$ be the vertices of $G$ (ordered by going around the cycle). As each vertex has valency two, there are no choices to be made for the second piece of gluing data (cyclic ordering of edges about each vertex). Pick an arbitrary piece of first data (i.e.~choice of orientation of each edge); by swapping orientations of vertices, we can arrange to have edges arranged positively from $e_1$ to $e_2$, $e_2$ to $e_3$, $\ldots$, $e_{n-1}$ to $e_n$, but have no control on the edge from $e_n$ to $e_1$. 
Let $\sigma = \text{Id}$ if it is oriented positively, and $\sigma = (n-1 , n)$ otherwise. 

Let $\Sigma$ be obtained by plumbing $T^\ast S^1$s according to $G$; call $v_i$ the exact $S^1$ associated to $e_i$. 
Let $W$ be the total space of a Lefschetz fibration over $\C$ with smooth fibre $\Sigma$,  $2n$ critical points, and  distinguished ordered collection of vanishing cycles $v_{\sigma(1)}, \ldots, v_{\sigma(n)}, v_{\sigma(1)}, \ldots, v_{\sigma(n)}$. 
Now notice that up to smoothing corners, $W$ is precisely the plumbing of $T^\ast S^2$s according to $G$. If we iterate, we would get the plumbing of $T^\ast S^3$s along $G$, and so on.

%\begin{itemize}
%\item  
%\item 
%\item
%\end{itemize}
\end{exm}

We are now ready to state our main theorem.

\begin{thm} \label{thm:main}
Let $(\Sigma; \{ v_i \})$ be a real two-dimensional Liouville domain, together with a collection of exact $S^1$s $v_i$. Let $(M^{2n}; \{ V_i \})$ be any Liouville domain and collection of Lagrangian spheres such that $(\Sigma, M^{2n}; \{ (v_i, V_i) \} )$ is an $(n-1)$--stabilisation. Then any relation between the Dehn twists $\tau_{V_i} \in \pi_0 \Symp^c (M)$ must also hold between $\tau_{v_i} \in \pi_0 \Symp^c(\Sigma)$:
$$
\prod_j \tau^{m_j}_{V_{i_j}}=  \id  \in \pi_0 \Symp^c (M) \Rightarrow 
\prod_j \tau^{m_j}_{v_{i_j}}=  \id  \in \pi_0 \Symp^c (\Sigma).
$$
\end{thm}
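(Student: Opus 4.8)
The plan is to detect relations between Dehn twists by their action on the Fukaya category, and to show that this action cannot lose information when passing from the surface $\Sigma$ to its Lefschetz stabilisation $M$. By induction on the number of one-stabilisations, it suffices to treat a single Lefschetz one-stabilisation $(F, M; \{(v_i, V_i)\})$: I want to show that if $\prod_j \tau_{V_{i_j}}^{m_j}$ is symplectically isotopic to the identity on $M$, then $\prod_j \tau_{v_{i_j}}^{m_j}$ is symplectically isotopic to the identity on $F$; applying this repeatedly collapses the whole tower down to the surface. (At the bottom, $F=\Sigma$ and there is nothing to prove.)

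The key tool is Seidel's description of the Fukaya category of a Lefschetz fibration. Let $\mathcal{A}$ be the $A_\infty$-subcategory of the Fukaya category $\mathcal{F}uk(F)$ generated by the objects $v_1, \ldots, v_k$, and let $\mathcal{B}$ be the subcategory of $\mathcal{F}uk(M)$ generated by $V_1, \ldots, V_k$. Because the $V_{\sigma(i)}$ are matching cycles for matching paths joining the $i$th and $(i+k)$th critical points, with distinguished vanishing cycles $v_{\sigma(1)}, \ldots, v_{\sigma(k)}, v_{\sigma(1)}, \ldots, v_{\sigma(k)}$, the standard computation of Floer cohomology of matching cycles (as a cone, or via the long exact sequence relating $HF(V_i, V_j)$ to $HF(v_i, v_j)$ on the fibre) should give a quasi-isomorphism, or at least a cohomologically faithful $A_\infty$-functor, $\Phi\colon \mathcal{A} \to \mathcal{B}$ sending $v_i \mapsto V_i$. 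Under this functor, the Dehn twist $\tau_{v_i}$ acts on $\mathcal{A}$ by the algebraic twist $T_{v_i}$ (Seidel's twist long exact sequence), and $\tau_{V_i}$ acts on $\mathcal{B}$ by $T_{V_i}$, and these are intertwined by $\Phi$. A symplectomorphism isotopic to the identity acts trivially on the Fukaya category up to quasi-isomorphism of functors; hence $\prod_j \tau_{V_{i_j}}^{m_j} \simeq \id$ forces $\prod_j T_{V_{i_j}}^{m_j} \simeq \id$ as an $A_\infty$-autoequivalence of $\mathcal{B}$, and pulling back along $\Phi$ gives $\prod_j T_{v_{i_j}}^{m_j} \simeq \id$ on $\mathcal{A}$.

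The remaining, and more delicate, step is to upgrade a trivial action on the Fukaya category back to a genuine symplectic isotopy on the \emph{surface} $\Sigma$. This is where two-dimensionality is essential: for surfaces with boundary one knows that the natural map from $\pi_0\Symp^c(\Sigma) = \Mod(\Sigma, \partial)$ to the group of $A_\infty$-autoequivalences of $\mathcal{F}uk(\Sigma)$ (or of the wrapped category, or an appropriate full subcategory containing all the $v_i$ and enough arcs) is injective, by results on faithfulness of the Fukaya-categorical representation of surface mapping class groups. So from $\prod_j T_{v_{i_j}}^{m_j} \simeq \id$ I can conclude $\prod_j \tau_{v_{i_j}}^{m_j} = \id$ in $\Mod(\Sigma, \partial)$, provided I have arranged the generating collection to be large enough to see the whole mapping class group; if the $v_i$ alone do not generate, I enlarge the configuration by adding auxiliary curves and check the stabilisation picture is unaffected, or instead argue directly that a product of twists in the $v_i$ acting trivially on the curves it can act on is already trivial.

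I expect the main obstacle to be the first half of the middle step: constructing the functor $\Phi$ and proving it is cohomologically faithful, i.e.\ genuinely identifying $\mathcal{A}$ with (a subcategory of) $\mathcal{B}$ compatibly with the twist operations. The matching-cycle Floer cohomology computation is standard in outline, but one must be careful that the $A_\infty$-structure on the $V_i$ — higher products counting discs in the total space $M$ — is faithfully recorded by, and not strictly larger than, the structure on the $v_i$ in the fibre; controlling these higher products (perhaps by a grading/filtration argument coming from the fibration, or by realising $\mathcal{B}$ as a category of modules/twisted complexes over $\mathcal{A}$ à la Seidel's "directed" formalism) is the technical heart of the argument. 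Once $\Phi$ is in hand and compatibility with twists is checked, the rest is bookkeeping plus the cited surface-faithfulness input.
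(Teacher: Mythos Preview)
Your strategy hinges on producing an $A_\infty$-functor $\Phi\colon \mathcal{A}\to\mathcal{B}$ (with $\mathcal{A}=\langle v_i\rangle\subset\Fuk(F)$, $\mathcal{B}=\langle V_i\rangle\subset\Fuk(M)$) which is a quasi-equivalence intertwining the spherical twists, so that triviality of $\prod T_{V_{i_j}}^{m_j}$ can be ``pulled back'' to triviality of $\prod T_{v_{i_j}}^{m_j}$. This step cannot work. If such a $\Phi$ existed, conjugation by $\Phi$ would identify the subgroup of $\AutFuk$ generated by the $T_{v_i}$ with that generated by the $T_{V_i}$, and relations would transfer in \emph{both} directions. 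But the cautionary examples in the paper (Wajnryb's $E_6$ relation, Labru\`ere's relation for the four-valent plumbing) exhibit relations among the $\tau_{v_i}$ on the surface which fail in higher dimensions even at the level of the Fukaya category: the computation $\dim HF(\tau_A\tau_C D,\tau_B\tau_C E)=2$ for $n\ge 3$ shows the corresponding twist functors do not commute in $\AutFuk(M)$. So $\mathcal{A}$ and $\mathcal{B}$ are genuinely inequivalent in a twist-compatible way, and your ``technical heart'' is not a technicality but an obstruction. Weakening $\Phi$ to merely cohomologically faithful does not help either: from $\Phi\circ T\simeq\Phi$ with $\Phi$ only faithful you cannot conclude $T\simeq\id$.

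The paper's argument is therefore of a different nature. It does not build a functor between the two Fukaya categories; instead it exploits the $\Z/2$-involution $\iota$ on each one-stabilisation (rotation of the base by $\pi$), under which the matching cycles $V_i$ and their Dehn twists are equivariant with fixed locus $v_i$, $\tau_{v_i}$. Seidel--Smith's equivariant Floer theory then gives a \emph{one-directional} rank inequality $\dim HF(\Phi(V_i),V_j)\ge \dim HF(\phi(v_i),v_j)$, with equality mod $2$. This asymmetry is exactly what is needed and is consistent with the failure of the converse. The inequality is then fed into a surface-specific argument using Lagrangian \emph{arcs}: after enlarging $\Sigma$ by handle attachments so that arcs become exact circles, one shows $\phi$ must fix every arc up to isotopy rel boundary (because otherwise a suitable auxiliary arc would violate the rank inequality in the $0$/$1$ range), and a map fixing a disc-decomposing arc system is isotopic to the identity. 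Your instinct that faithfulness on the surface side requires non-compact objects is correct, but the mechanism for transporting information down from $M$ is an equivariant spectral-sequence inequality, not a categorical equivalence.
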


Up to deformation, any such surface $\Sigma$, and thus by construction $M$, is a (Wein)Stein domain.

In Theorem \ref{thm:main} both $\Symp^c(M)$ and $\Symp^c(\Sigma)$ are equipped with the $C^\infty$ topology, inherited from the compactly supported diffeomorphism groups. In particular, $\pi_0 \Sym^c (\Sigma)$ consists of equivalence classes of compactly supported symplectomorphisms up to symplectic (and not Hamiltonian) isotopy -- and so, by Moser's trick, agrees with the mapping class group $Mod(\Sigma, \partial)$. For $M$, this depends on whether or not $H^1(M)$ vanishes; note however that even if it doesn't, in this particular setting we can always construct Weinstein domains $M'$ with $H^1(M')=0$ and exact open embeddings $M \subset M'$, which we will use.

 Our proof uses Seidel and Smith's work on $\Z/2$--equivariant Floer theory \cite{SeidelSmith}. With care, one would expect to be able to use e.g.~tools from \cite{HLS} to prove that the conclusion of our theorem holds for a broader collection of pairs $(\Sigma, M)$; we have not pursued this here. 

There is a variant of our main theorem at the level of the group quasi-isomorphisms of the Fukaya category of compact Lagrangians, Theorem \ref{thm:main_noncompact}. 
These theorems allow us to `lift' various results about mapping class groups to the higher-dimensional symplectic setting; these corollaries are collected in Section \ref{sec:corollaries}. We record one here:

\begin{cor}(Theorem \ref{thm:virtual_RAAGs})
Fix a group $A$ that is virtually special in the sense of Haglund and Wise, for instance, the fundamental group of any hyperbolic 3-manifold. Then in each dimension greater than two there exist infinitely many simply connected Weinstein domains $M$ such that $A$ embeds into the group of quasi-isomorphisms of $\Fuk(M)$, the Fukaya category of $M$.
\end{cor}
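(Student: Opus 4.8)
The plan is to deduce the corollary from Theorem~\ref{thm:main} (or rather its $\Fuk$-variant, Theorem~\ref{thm:main_noncompact}) together with known facts about surface mapping class groups. The key input on the surface side is that if $A$ is virtually special, then $A$ embeds into $\Mod(\Sigma,\partial)$ for some surface $\Sigma$ with boundary: indeed, virtually special groups embed into right-angled Artin groups, and every right-angled Artin group embeds into some surface mapping class group by the construction of Koberda (realising the RAAG via Dehn twists along an appropriate configuration of simple closed curves, with commuting generators corresponding to disjoint curves and non-adjacent generators to curves intersecting enough to generate a free subgroup). The hyperbolic 3-manifold case is then the special case that such fundamental groups are virtually special by Agol--Wise.

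\medskip

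First I would fix such a configuration of exact $S^1$s $v_1,\dots,v_k$ in a Weinstein surface $\Sigma$ whose Dehn twists generate a copy of $A$ inside $\Mod(\Sigma,\partial) = \pi_0\Symp^c(\Sigma)$. Next, for each target dimension $2n$ with $n\geq 2$, I would take the iterated Lefschetz stabilisation: form $(M^{2n};\{V_i\})$ as an $(n-1)$-stabilisation of $(\Sigma;\{v_i\})$, choosing the stabilisations so that the resulting $M$ is simply connected (the excerpt already observes one can contract into a Weinstein $M'$ with $H^1=0$; more directly, plumbing-type stabilisations of a surface kill $\pi_1$ after one step, and the vanishing cycle picture lets one arrange $M$ simply connected). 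Then I would consider the subgroup of $\pi_0\Symp^c(M)$, or of the group of quasi-isomorphisms of $\Fuk(M)$, generated by the $\tau_{V_i}$.

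\medskip

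The main step is to show this subgroup is again isomorphic to $A$, i.e.\ that there is no \emph{extra} collapsing upstairs. One direction is Theorem~\ref{thm:main}: any relation among the $\tau_{V_i}$ descends to a relation among the $\tau_{v_i}$, hence (since the latter generate $A$ freely in the RAAG presentation) must be a consequence of the defining relations of $A$. For the other direction — that the defining relations of $A$ actually \emph{do} hold among the $\tau_{V_i}$ — I would note that disjoint $v_i$ stabilise to disjoint (or at least Hamiltonian-isotopically disjoint) $V_i$, so the commuting relations persist; since $A$'s presentation as a subgroup of the RAAG involves only commutators of generators, this suffices. Combining the two directions gives an isomorphism $A \cong \langle \tau_{V_i}\rangle \hookrightarrow \pi_0\Symp^c(M)$, and composing with the natural map to quasi-isomorphisms of $\Fuk(M)$ (using that Dehn twists act on the Fukaya category and a relation in the symplectic mapping class group that becomes trivial on $\Fuk(M)$ would, by the $\Fuk$-variant Theorem~\ref{thm:main_noncompact}, already be trivial downstairs) yields the embedding into $\Aut\Fuk(M)$.

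\medskip

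Finally, to get \emph{infinitely many} such $M$ in each dimension, I would vary the stabilisation data — e.g.\ stabilise further, or take connect sums / additional plumbing summands disjoint from the configuration — producing infinitely many pairwise non-symplectomorphic simply connected Weinstein domains each containing the required configuration; alternatively one invokes that there are already infinitely many topological types of Weinstein $M^{2n}$ arising this way, distinguished by their symplectic cohomology or by the underlying smooth manifold. The main obstacle I anticipate is the bookkeeping needed to guarantee simple connectivity of $M$ while preserving the full configuration of Lagrangian spheres and the disjointness relations among them; the core algebraic content is essentially immediate from Theorem~\ref{thm:main} once the surface-level embedding $A\hookrightarrow\Mod(\Sigma,\partial)$ is in hand.
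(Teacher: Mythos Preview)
Your proposal has a genuine gap at the very first step. You assert that ``virtually special groups embed into right-angled Artin groups,'' but this is false: \emph{virtually} special means only that some finite-index subgroup embeds into a RAAG, not that the group itself does. The easiest counterexample is any finite group (virtually special via the trivial subgroup, but RAAGs are torsion-free). So for a general virtually special $A$ you cannot simply realise $A$ inside $\Mod(\Sigma,\partial)$ via Koberda's construction and then stabilise; all you get from Koberda and Proposition~\ref{prp:embed_RAAG} is that a finite-index subgroup $H_0 \lhd A$ embeds.

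The paper bridges this gap with a genuinely new ingredient that your sketch is missing: the Kaloujnine--Krasner theorem, which embeds $A$ into the wreath product $\Gamma \wr G$ where $G = A/H_0$ and $\Gamma$ is the RAAG containing $H_0$. One then has to realise not just $\Gamma$ but the full wreath product inside $\AutFuk(M)$. This requires building, following Bridson, a $G$-equivariant surface $\Sigma_G$ (gluing $|G|$ copies of the RAAG surface to a surface carrying a free $G$-action by boundary connect sum), and then constructing $G$-\emph{equivariant} Lefschetz stabilisations (Lemma~\ref{lma:equivariant_stabilisation}) so that $G$ acts on $M$ permuting the embedded copies of $\Gamma$. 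The semidirect product structure of $\Gamma \wr G$ then matches the action of $G$ by conjugation on $\prod_g \Gamma_g \subset \AutFuk(M)$. None of this is visible in your outline, and it is where most of the work lies; the part you describe essentially reproves Corollary~\ref{cor:RAAGs}, which is only the RAAG case.
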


%\begin{rmk}
%We will see that for the conclusion of Theorem \ref{thm:main} to hold, it is enough for $\prod_j \tau^{m_j}_{V_{i_j}}$ to be the identity in $\pi_0 \Symp^c (M')$, for any Liouville domain $M'$ for which there is a (codimension zero) inclusion of Liouville domains $M \subset M'$. 
%\end{rmk}

\subsection{Two cautionary examples}

The converse to Theorem \ref{thm:main} is known to be false: relations in the $\tau_{v_i}$ need not hold in the $\tau_{V_i}$. Here are two examples where this fails. 

\subsubsection{Relations in $E_6$ configurations}

Consider the Milnor fibre of the $E_6$ singularity, $\Sigma =  \{x^3+y^4=1\}$. 
Wajnryb \cite{Wajnryb} proved that the Artin group (i.e.~generalised braid group) of type $E_6$ does not embed into the mapping class group of  $\Sigma$. On the other hand, Seidel (\cite[Remark 20.7]{Seidel_book} and \cite[Corollary 6.5]{Seidel_suspending}) shows that Wajnryb's relation does \emph{not} hold for $E_6$ Milnor fibres of sufficiently high dimension: the varieties $\{ z_0^3 + z_1^4 + z^2_2 + \ldots + z_n^2 =1\}$ for $ n \geq  3$. Moreover, Qiu and Woolf \cite{Qiu-Woolf} show that in fact the $E_6$ Artin group acts freely on the Fukaya category of such Milnor fibres; in particular, it embeds into their symplectic mapping class groups.

\subsubsection{The Labru\`ere relation in a four-valent plumbing}\label{sec:Labruere}

Consider a graph $G$ with five vertices: a single central four-valent one and four leaves, such that the associated plumbing of $T^\ast S^1$s is given on the left of Figure \ref{fig:Labruere}. 
The Artin-Tits group associated to this graph, say $H$, has a generator for each of the vertices, say $\sigma_a,  \ldots, \sigma_e$, with the following relations: two generators commute if there is no edge between the corresponding vertices, and have a braid relation if there is. 
In \cite{Labruere}, Labru\`ere showed that the natural map from $H$ to the mapping class group of $\Sigma$, given by mapping $\sigma_a$ to the Dehn twist $\tau_a$, etc., has non-trivial kernel. In the case at hand, the relation in \cite[Section 2.2]{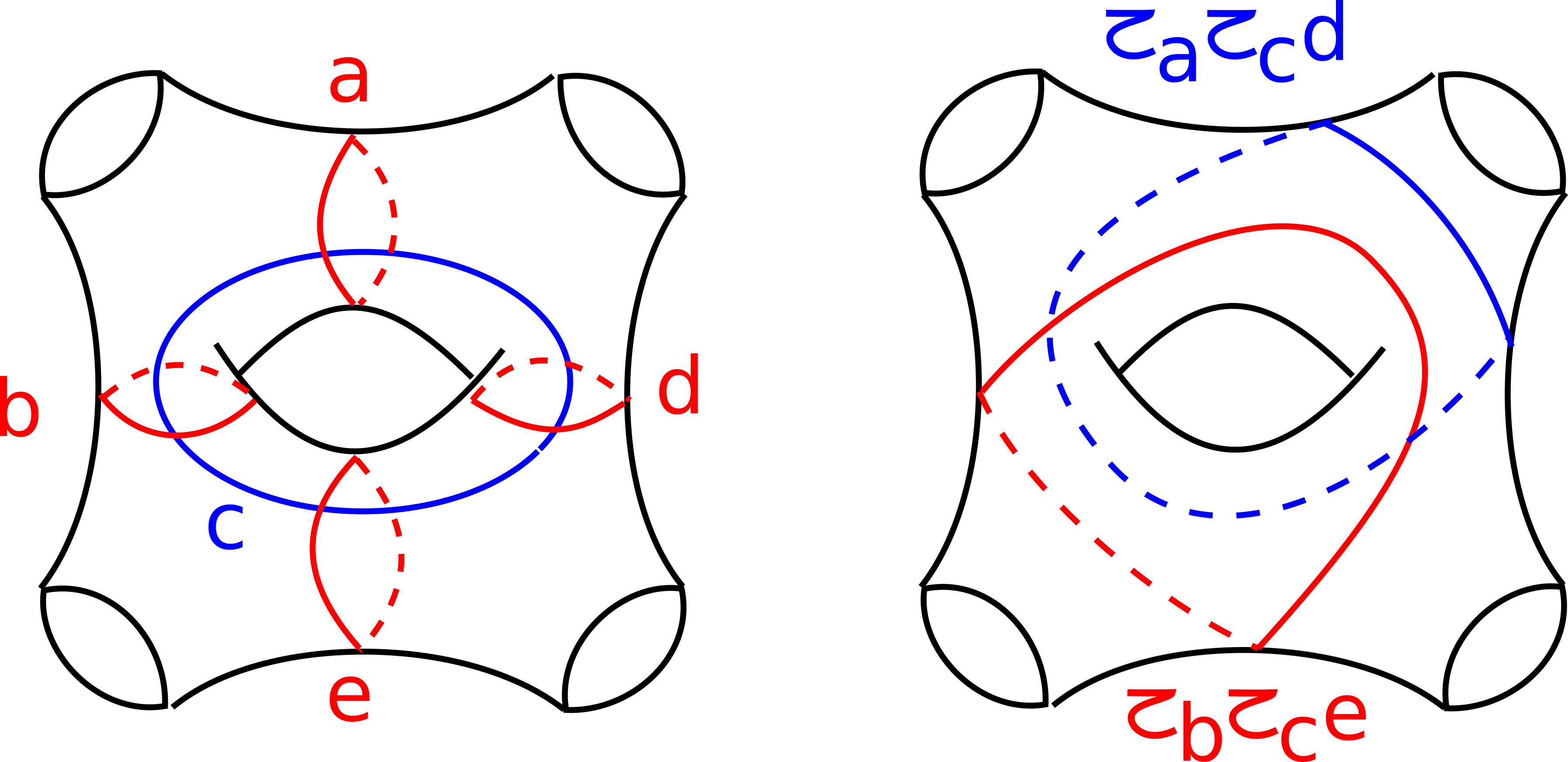} boils down to the fact that the Dehn twists in $\tau_a \tau_c d$ and $\tau_b \tau_c e$ commute, whereas their natural preimages in $H$ do not. (Many thanks to Jonny Evans for spotting this version of the relation, which is simpler than the more general one described in the article.) The corresponding two curves are given on the right-hand side of Figure \ref{fig:Labruere}. 

\begin{figure}[htb]
\begin{center}
\includegraphics[scale=0.37]{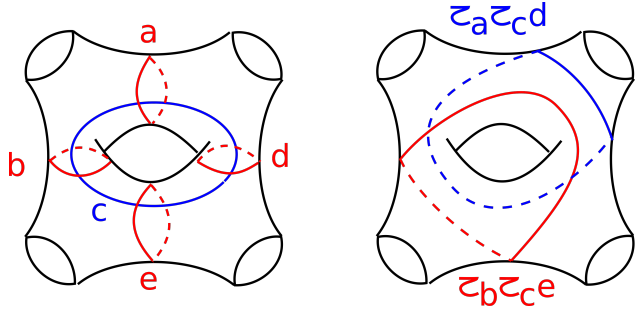}
% or [scale=0.85]
\caption{Curve configurations for Labru\`ere's relation.}
\label{fig:Labruere}
\end{center}
\end{figure} 

This relation fails to hold in higher dimensions for analogous reasons to the $E_6$ case: consider the corresponding plumbing of $T^\ast S^n$s, for some $n \geq 3$, say $M$. By \cite[Corollary 6.5]{Seidel_suspending}, the subcategory of the Fukaya category of $M$ generated by the five zero-sections is formal; label the corresponding objects by $A, B, C, D$ and $E$. 
We have that 
$$
\xymatrix{
\tau_A \tau_C D \cong \big\{ \, p_D^{\vee}  \otimes p_A \otimes A \ar[r]^-{1 \otimes ev} &
p_D^{\vee} \otimes C \ar[r]^-{ev} & D  \big\} 
}
$$
and 
$$
\xymatrix{
\tau_B \tau_C E \cong \big\{ \, p_E^{\vee} \otimes p_B \otimes B \ar[r]^-{1 \otimes ev} &
p_E^{\vee} \otimes C \ar[r]^-{ev} & E \big\}
}
$$
where $p_A \in CF(A,C)$ is the (unique) generator of the Floer chain complex $CF(A,C)$, corresponding to the transverse intersection point between $A$ and $C$,  $p_A^{\vee} \in CF(C, A)$ is its dual, and similarly for $p_B$, $p_D$ and $p_E$. 

Using formality, one can then calculate that 
$$
dim \, HF (\tau_A \tau_C D, \tau_B \tau_C E) = 2
$$
and so the Dehn twists in $\tau_A \tau_C D$ and $\tau_B \tau_C E$ generate a free subgroup of automorphisms of the Fukaya category instead of commuting.

\subsection*{Conventions}

Given two compact Lagrangians $L_0, L_1$ in a Liouville domain $M$, $HF(L_0, L_1)$ will denote the Floer cohomology group between them with $\Z/2$ coefficients and no gradings. The Fukaya category $\Fuk(M)$ will be defined as in \cite[Section 9]{Seidel_book}: objects associated to compact exact Lagrangians, $\Z/2$ coefficients, no gradings. We'll denote by $\AutFuk(M)$ the group of quasi-isomorphisms of the associated category $\text{Tw} \Fuk (M)$, also definted as in \cite{Seidel_book}.

\subsection*{Acknowledgements} 
I am very grateful to Jonny Evans for numerous discussions regarding higher-dimensional symplectic mapping class groups, encouragements, and feedback on an early version of the draft. In particular, much of Section \ref{sec:Labruere} and Remark \ref{rmk:Mess} stems from conversations with him.

Many thanks also to Henry Wilton for bringing Bridson's work \cite{Bridson} to my attention, and for explanations regarding wreath products. I am also grateful to both him and Ivan Smith for comments on an earlier version of this article.

I was partially supported by NSF grant DMS--1505798, and by NSF grant DMS--1128155 whilst at the Institute for Advanced Study. Thanks to the Institute for a very enjoyable semester, and to Helmut Hofer for his role in making it happen.

\section{Proof of the main theorem}

\subsection{Lagrangian arcs}

\begin{dfn}
An arc on a real two-dimensional Liouville domain $\Sigma$ is the image of an embedding 
$([0,1], \partial) \to (\Sigma, \partial)$ such that $(0,1)$ has image in the interior of $\Sigma$.
A \emph{Lagrangian arc} in  $\Sigma$ is an arc that is invariant under the Liouville flow in a small collar neighbourhood of $\partial \Sigma$.
\end{dfn}

\begin{dfn}
Assume that $c_1$ and $c_2$ are arcs in $\Sigma$ with disjoint boundaries, or embedded $S^1$'s. The \emph{minimal intersection number} of $c_1$ and $c_2$, say $\Imin(c_1, c_2)$, is the minimum of the unsigned intersection numbers between representatives of the isotopy classes rel boundary of $c_1$ and $c_2$. 
\end{dfn}

\begin{lma}\label{lma:arc_detection}

Let $a \subset \Sigma$ be a Lagrangian arc on $\Sigma$. Then there exists a Lagrangian arc $c \subset \Sigma$ with $\Imin (a,c) = 1$. Moreover, given any Lagrangian arc $b$ disjoint from $a$ in a neighbourhood of one of its boundary points,  $c$ can also be arranged to be disjoint from $b$.
\end{lma}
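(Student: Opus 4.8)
The plan is to work purely topologically on the surface $\Sigma$; the Liouville/Lagrangian condition only matters near $\partial\Sigma$, and can be imposed at the end by an isotopy supported near the boundary. First I would reduce to the case where $a$ is a fixed arc: choose a collar $\partial\Sigma\times[0,1)$ in which $a$ is a flow line, and note that any arc can be homotoped (keeping endpoints on $\partial\Sigma$, keeping it Lagrangian near the boundary) to such a standard form. So we need: given an embedded arc $a$ with endpoints on $\partial\Sigma$, find an embedded arc $c$ with $\Imin(a,c)=1$, disjoint from a given arc $b$ disjoint from $a$. The idea is to build $c$ by taking a tiny transverse arc through a single point of $a$ and then extending both ends out to $\partial\Sigma$ while avoiding $a$ and $b$.

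The key steps, in order. (1) Pick a point $p$ in the interior of $a$, and in a small disc neighbourhood $D$ of $p$ take a short embedded arc $c_0$ crossing $a$ transversely exactly once, with $\partial c_0\subset\partial D$, and $D$ chosen small enough to be disjoint from $b$. (2) Cut $\Sigma$ along $a$: since $a$ is a properly embedded arc, $\Sigma\smallsetminus a$ is a surface $\Sigma'$ (with corners/boundary), and the two endpoints of $c_0$ lie on the two copies of $a$ in $\partial\Sigma'$, or possibly on the same copy. In $\Sigma'$ extend each endpoint of $c_0$ by an embedded arc to a point of the ``old'' boundary $\partial\Sigma$, choosing these extension arcs to be disjoint from each other and from $b$ — this is possible because $b$ is a properly embedded arc in $\Sigma'$, hence its complement is connected enough (or one routes around it) to connect any interior point to $\partial\Sigma$; one must also ensure the two extensions have distinct endpoints on $\partial\Sigma$ so that $c$ is genuinely an arc and not a loop. (3) The resulting curve $c$, pushed back into $\Sigma$, is an embedded arc meeting $a$ transversely in one point, so $\Imin(a,c)\le 1$; and $\Imin(a,c)\ge 1$ because $c$ has an endpoint not on $a$ yet, having been built to cross $a$, is not isotopic rel boundary into a component of $\Sigma\smallsetminus a$ — more precisely the algebraic count of intersections with $a$, computed in the double cover or via the $\Z/2$ intersection pairing $H_1(\Sigma,\partial\Sigma)\times H_1(\Sigma\smallsetminus a,\partial)$, is nonzero, forcing $\Imin(a,c)=1$. (4) Finally isotope $c$ near $\partial\Sigma$ to make it invariant under the Liouville flow in a collar, which changes nothing about intersection numbers with $a$ or disjointness from $b$ (both of which can be arranged to already be in standard collar form).

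The main obstacle I anticipate is step (2)–(3): making sure the extension arcs can simultaneously avoid $b$ \emph{and} produce an arc $c$ whose intersection number with $a$ is provably exactly one, rather than merely at most one. The subtlety is topological bookkeeping about which boundary components and which sides of $a$ the endpoints of $c_0$ land on — in particular if both endpoints of $c_0$ are forced onto the same component of $\partial\Sigma$, or if $\Sigma\smallsetminus a$ is disconnected (which happens precisely when $a$ is separating), one has to argue more carefully that the $\Z/2$-homological intersection number is still odd, using that $c$ connects the two sides of $a$. I would handle the lower bound cleanly by the parity argument: $[c]$ has nonzero image under the boundary map $H_1(\Sigma,\partial\Sigma;\Z/2)\to H_0(\Sigma\smallsetminus a;\Z/2)$ relative to a regular neighbourhood of $a$, equivalently $c$ pairs nontrivially with the class of $a$ in the mod-2 intersection form, and this is an isotopy invariant, so no representative of $c$ can be disjoint from $a$.
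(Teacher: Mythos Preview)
Your argument is essentially correct, but it is considerably more elaborate than the paper's. The paper avoids all of your steps (2)--(3) entirely by choosing $c$ to be \emph{local}: pick one endpoint of $a$ on $\partial\Sigma$, take a collar neighbourhood $\nu$ of that boundary component, and let $c$ be a short arc contained in $\nu$ crossing $a$ transversally once (so both endpoints of $c$ lie on the same boundary circle, on opposite sides of $a$'s endpoint). Disjointness from $b$ is then automatic by shrinking $c$ into a sufficiently small neighbourhood of $a$, since $b$ is disjoint from $a$. No cutting along $a$, no routing of extension arcs around $b$, and no connectivity bookkeeping is needed.

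What your approach buys is an explicit justification of the lower bound $\Imin(a,c)\ge 1$ via the mod-$2$ intersection pairing; the paper leaves this implicit (it is immediate once one observes that the endpoints of $c$ separate the endpoint of $a$ on the boundary circle, so the algebraic intersection number is odd). What the paper's approach buys is that the construction is one line and the potential obstacle you flag in steps (2)--(3) --- whether the extension arcs can simultaneously avoid $b$ and land on $\partial\Sigma$ with the right parity --- simply never arises. Your worry about the separating/non-separating dichotomy for $a$, and about which components of $\Sigma'\setminus b$ meet $\partial\Sigma$, is real bookkeeping that the local construction sidesteps entirely.
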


\begin{proof}
Note that any embedded curve $([0,1], \partial) \to (\Sigma, \partial)$ can be isotoped rel boundary to a Lagrangian arc: the curve just needs to be rectified in a collar neighbourhood of the boundary. 

Consider one of the boundary points of $a$. Let $\nu$ be a collar neighbourhood of the boundary component it belongs to. Then we can choose a small Lagrangian arc $c$, contained in $\nu$, that intersects $a$ transversally in a single point. Moreover, given another Lagrangian arc $b$ disjoint from $a$  in a neighbourhood of one of its boundary points, by choosing $c$ to also lie in a sufficiently small neighbourhood of this boundary point of $a$, we can arrange for $c$ and $b$ to be disjoint.
%
%Let $\nu$ be an open tubular neighbourhood of $c$.
%Consider $\Sigma \backslash \nu$; this has at most two path-connected components. Moreover, it can only have two path connected components in the case where both boundary points on $c$ lie on the same boundary component of $\Sigma$; in that case, notice that both components of $\Sigma \backslash \nu$ must have as part of their boundary pieces of the boundary of $\Sigma$. In either case, starting with a small segment in $\nu$ transverse to $c$, we see that it can be extended in $\Sigma \nu$ on both sides, without self-intersection, to reach $\partial \Sigma$. This proves the first part of the claim.
%
%For the second part of the claim, suppose we are given a Lagrangian arc $b$ disjoint from $c$. Pick an $a$ as above. After perturbation if necessary, $a$ and $b$ intersect transversally, away from $c$. To obtain a suitable new Lagrangian arc, say $a'$, one can proceed as follows: starting at the point $a \pitchfork c$, travel out along $a$ (on both sides of $\nu$) until you first hit $b$, or get to $\partial \Sigma$; keep this embedded segment, and discard the rest of $a$. Now, on each side where you hit $b$, to get to $\partial \Sigma$ you can instead follow a path parallel to $b$. Note that even if the two subsets of $a'$ run parallel at this point, one can still readily arrange for $a'$ to be embedded. 
\end{proof}

Consider two Lagrangian arcs in $\Sigma$ with disjoint boundaries, say $c_1$ and $c_2$. We will use $HF(c_1, c_2)$ to denote the unwrapped Floer cohomology group of $c_1$ and $c_2$, with $\Z/2$ coefficients and no gradings. (If you attach two one-handles to $\Sigma$, one at the boundary of each of $c_1$ and $c_2$, then there is a natural isomorphism $HF(c_1, c_2) \cong HF(s_1, s_2)$, where $s_i$ is the union of $c_i$ and the core of the corresponding handle.)

We will repeatedly use the following elementary fact, of which we recall a proof. 

\begin{lma}\label{lem:Imin=HF}
Assume $a$ and $b$ are Lagrangian arcs, with disjoint boundaries. Then
$$
\Imin (a, b) = \dim HF (a,b).
$$
\end{lma}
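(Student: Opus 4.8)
The plan is to establish the equality $\Imin(a,b) = \dim HF(a,b)$ by the standard two-sided argument: first show that the Floer differential vanishes for a minimal-position representative, so $\dim HF(a,b) = \Imin(a,b)$; and conversely observe that $\dim HF(a,b)$ is an invariant that bounds the geometric intersection number from below, so no representative can do better than $\dim HF(a,b)$ intersection points.

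\textbf{Step 1: Setting up a good model.} First I would isotope $a$ and $b$ rel boundary so that they are Lagrangian arcs meeting transversally and minimally, i.e.\ with exactly $\Imin(a,b)$ intersection points. Since the boundaries are disjoint, the unwrapped Floer complex $CF(a,b)$ is generated by these interior intersection points, so as a vector space it has dimension $\Imin(a,b)$; it remains to compute the differential. The cleanest way to organize this is via the handle-attachment reformulation already noted in the excerpt: attach a $1$-handle at the boundary of each of $a$ and $b$ to get closed exact curves $s_a, s_b$ in a larger surface $\Sigma'$, with $HF(a,b) \cong HF(s_a, s_b)$ and with $s_a, s_b$ still in minimal position (the handle attachment does not create new intersections and preserves minimality since the arcs' endpoints were already disjoint). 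Now I am in the familiar world of Floer theory of curves on surfaces.

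\textbf{Step 2: The differential vanishes.} The key point is that for two curves on a surface in minimal position, every immersed bigon (holomorphic strip of index $1$) is obstructed. Concretely: a nontrivial contribution to the Floer differential between intersection points $p$ and $q$ would come from an immersed bigon with corners at $p$ and $q$; by standard surface topology (the bigon criterion, e.g.\ as in Farb--Margalit or Abouzaid's surface Fukaya category paper), the existence of such a bigon would allow one to reduce the number of intersection points by an isotopy, contradicting minimality. Hence $d = 0$ and $\dim HF(a,b) = \Imin(a,b)$. Alternatively, and perhaps more self-containedly, one can invoke that Floer cohomology is invariant under Hamiltonian isotopy and compactly supported symplectic isotopy, together with the fact that for curves in minimal position on a surface one can find a Riemannian metric making them geodesics, for which no nonconstant holomorphic bigons exist.

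\textbf{Step 3: The reverse inequality.} For the converse, $\dim HF(a,b)$ is an invariant of the isotopy classes rel boundary of $a$ and $b$ (it does not depend on the choice of representatives used to compute it), whereas for \emph{any} transverse representatives $a', b'$ of these classes the Floer complex has dimension equal to the geometric intersection number $|a' \cap b'|$, which is at least $\dim HF(a', b') = \dim HF(a,b)$. Taking the minimum over all representatives gives $\Imin(a,b) \geq \dim HF(a,b)$, and combined with Step 2 we get equality. \textbf{The main obstacle} is really just being careful about which version of Floer theory is in play — these are non-compact (arc) Lagrangians, so one must make sure the unwrapped Floer cohomology is well-defined and isotopy-invariant with the boundary conditions as stated; the handle-attachment trick from the excerpt is precisely what lets us reduce to the compact closed-curve case where everything is classical, so I would lean on that reduction to avoid re-proving invariance from scratch.
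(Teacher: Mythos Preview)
Your overall strategy matches the paper's: put the arcs in minimal position, then use the bigon criterion to see that the Floer differential vanishes. The gap is in Step~1 (and implicitly in the invariance claim of Step~3): you isotope $a$ rel boundary to achieve minimal intersection with $b$, but this is only a \emph{smooth} isotopy, not a priori a Hamiltonian one. A smooth isotopy $a_t$ of arcs rel boundary carries a flux --- the signed area swept out --- and when this is nonzero the isotopy is not generated by a compactly supported Hamiltonian, so invariance of $HF$ does not apply. The handle-attachment reduction does not rescue you here: after closing up, that same flux is exactly the obstruction to $s_{a_1}$ being exact (equivalently, to $s_a$ and $s_{a_1}$ being Hamiltonian isotopic), so you are back to the same problem for closed curves.

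The paper confronts this head on. It first chooses an auxiliary arc $c$ meeting $a$ transversally in a single interior point and disjoint from $b$. Given the smooth minimising family $a_t$, at each time it pushes $a_t$ along a tubular neighbourhood of $c$ by an amount chosen to cancel the accumulated flux between $a$ and $a_t$; since $c$ misses $b$, this correction creates no new intersections with $b$. The corrected family $a'_t$ is then a genuine compactly supported Hamiltonian isotopy with $a'_1 \pitchfork b$ still consisting of $\Imin(a,b)$ points. After that, the bigon argument (the paper cites \cite[Proposition~3.10]{FLP}) finishes exactly as in your Step~2. So your proof becomes correct once you insert this flux-cancellation step; without it, the assertion that ``$\dim HF(a,b)$ is an invariant of the isotopy classes rel boundary'' is precisely what remains to be proved.
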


%Should be true with exact Lag S^1s but messier proof, and I don't think we need it.

\begin{proof}
Pick an auxiliary Lagrangian arc $c$ that is disjoint from $b$, and intersects $a$ transversally at an interior point. 
Add half-infinite cylindrical ends to $\Sigma$ in the standard way; call the resulting Liouville manifold $\tilde{\Sigma}$; by abuse of notation, we will still denote by $a, b$ and $c$ the obvious completions of the Lagrangian arcs.

First, we check that after a compactly supported \emph{Hamiltonian} isotopy of $\tilde{\Sigma}$, we can arrange for $a$ and $b$ to intersect transversally in $\Imin(a, b)$ points. 
To do this, start with  a smooth one-parameter family $a_t$, $t \in [0,1]$, of arcs such that $a_0=a$, $a_t$ agrees with $a$ outside the interior of $\Sigma$ for all $t$, and $a_1$ intersects $b$ minimally. 
Without loss of generality these can be deformed to be Lagrangian arcs, and in such a way that $c$ intersects each of them transversally in one point. 
At each time $t$, we can deform $a_t$ in a tubular neighbourhood of $c$ by `pushing' it along $c$ (with direction depending on the sign of the flux between $a$ and $a_t$) to cancel out the flux between $a$ and $a_t$ -- call the result $a'_t$; this can be done smoothly in $t$. 
As $c$ is disjoint from $b$, we can arrange for this not to introduce intersection points with $b$. 
Thus $a'_1$ and $b$ intersect in $\Imin(a,b)$ intersection points, and, by construction, there is a compactly supported Hamiltonian isotopy of $\Sigma$ taking $a$ to $a'_1$. 

Now, there can't be any non-constant holomorphic disc between any pair of intersection points in $a'_1 \pitchfork b$, by minimality of the intersection number $I(a'_1, b)$ and \cite[Proposition 3.10]{FLP}.
\end{proof}

\subsection{Symplectic involutions and intersection numbers}

We start by recording some useful features of Lefschetz stabilisations.

Given a Lefschetz one-stabilisation, the involution $z \mapsto -z$ of the base $\C$ extends to an involution of the total space. This preserves the symplectic form $\omega$, and a boundary-convex $\omega$-adapted almost complex structure $J$. We will denote this involution by $\iota$. 

\begin{lma}\label{lem:equal_HF}
Let $(F, M; \{ (v_i, V_i) \} )$ be a Lefschetz one-stabilisation. Then for all $i, j$, we have
$$
\text{dim} \, HF (V_i, V_j) = \text{dim} \, HF (v_i, v_j).
$$
%where $HF(a,b)$ denotes the ungraded Floer homology with $\Z /2$ coefficients. 
\end{lma}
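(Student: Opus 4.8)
The plan is to exhibit a concrete symplectic involution of $M$ whose fixed locus is (a copy of) $F$, and to which the setup of Seidel and Smith's $\Z/2$--equivariant Floer theory applies, then relate $HF(V_i, V_j)$ to $HF(v_i, v_j)$ via the resulting localisation spectral sequence together with a matching of dimensions. Concretely, by hypothesis $M$ is the total space of a Lefschetz fibration $\pi : M \to \mathbb{D}$ with $2k$ critical points, critical values at the $(2k)^{th}$ roots of unity, vanishing cycles $v_{\sigma(1)}, \ldots, v_{\sigma(k)}, v_{\sigma(1)}, \ldots, v_{\sigma(k)}$ along the straight vanishing paths, and each $V_{\sigma(i)}$ the matching cycle over the matching path joining the $i^{th}$ and $(i+k)^{th}$ critical points through $0$. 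The first step is to build the involution: since the vanishing cycles repeat with period $k$, there is a fibrewise symplectomorphism realising the rotation of the base by $\pi$ (the deck transformation of the double cover branched nowhere, i.e.\ multiplication by $-1$ on $\mathbb{D}$), which lifts to a symplectomorphism $\iota$ of $M$ of order two. Its fixed point set is the fibre $\pi^{-1}(0) = F$, and each matching cycle $V_{\sigma(i)}$, being built symmetrically over a matching path through the origin, is $\iota$--invariant with $\iota|_{V_{\sigma(i)}}$ the antipodal-type involution whose fixed locus is exactly $v_{\sigma(i)} \subset F$.

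Next I would invoke Seidel--Smith: for the involution $\iota$ with fixed locus $F$, and for the pair of $\iota$--invariant Lagrangians $V_i, V_j$ (which can be isotoped $\iota$--equivariantly to meet only inside $F$, where they restrict to $v_i, v_j$), there is a localisation spectral sequence converging to $HF(V_i, V_j)$ whose first page is built from the equivariant Floer cohomology of the fixed locus, which here is $HF(v_i, v_j)$ tensored up over the relevant coefficient ring. In particular this gives the inequality $\dim HF(V_i, V_j) \le \dim HF(v_i, v_j)$ (after the appropriate rank count over $\Z/2$, absorbing the Borel/parameter factor). For the reverse inequality I would argue geometrically on the surface side via Lemma \ref{lem:Imin=HF}: intersection points of $V_i$ and $V_j$ all lie in $F = \pi^{-1}(0)$ and are precisely intersection points of $v_i$ and $v_j$, so a model computation (or a direct count using that the $V_i$ are matching cycles over straight paths, which are "unlinked" and hence geometrically minimal in the fibre) shows $\dim HF(V_i,V_j) = \Imin(v_i, v_j) = \dim HF(v_i,v_j)$ directly, forcing the spectral sequence to degenerate. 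Either way, the two dimensions coincide.

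The main obstacle I anticipate is the careful construction of $\iota$ as a genuine \emph{symplectic} involution on the smoothed total space (matching Seidel--Smith's hypotheses on the behaviour near the boundary and on the stable normal bundle of the fixed locus), and checking that $V_i, V_j$ can be made $\iota$--invariant and $\iota$--equivariantly transverse with all intersections pushed into $F$ while keeping the restriction to $v_i, v_j$ — this is where the "matching cycle over a straight path through $0$" structure does the real work. A secondary subtlety is bookkeeping the coefficient ring in the localisation theorem so that the rank comparison is clean over $\Z/2$; since we use no gradings and only care about total dimension, this should reduce to the statement that the localised equivariant Floer cohomology has the same $\Z/2$--rank as $HF(v_i,v_j)$. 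Everything after that — propagating the one-stabilisation statement to the iterated case — is immediate from the definition of $(n-l)$--stabilisation.
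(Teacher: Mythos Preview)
The paper states this lemma without proof; it is treated as a standard fact about matching cycles. The expected argument is direct and does not use equivariant Floer theory at all: arrange the matching paths for $V_i$ and $V_j$ to be straight segments through $0$, so that $V_i \cap V_j \subset \pi^{-1}(0) = F$ equals $v_i \cap v_j$; for an almost complex structure making $\pi$ holomorphic, any Floer strip with boundary on $(V_i, V_j)$ projects to a holomorphic map into $\mathbb{D}$ with boundary on the two matching paths and both asymptotics at $0$, hence is constant in the base by the open mapping theorem. Thus all Floer strips lie in $F$, and $CF(V_i,V_j) \cong CF(v_i,v_j)$ as chain complexes, giving the equality on the nose.

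Your proposal has a genuine error. The Seidel--Smith localisation spectral sequence runs the other way: its $E_1$ page is $HF(V_i,V_j)\otimes \Z/2[[q]]$ and it converges (after inverting $q$) to $HF(v_i,v_j)\otimes \Z/2((q))$, so the inequality it produces is $\dim HF(V_i,V_j) \ge \dim HF(v_i,v_j)$, not $\le$. This is exactly how the paper uses it in Theorem~\ref{thm:SeidelSmith}. With the corrected direction, your ``reverse'' step would need $\dim HF(V_i,V_j) \le \dim HF(v_i,v_j)$, and your appeal to $|V_i\cap V_j| = |v_i\cap v_j|$ only gives $\dim HF(V_i,V_j) \le \dim CF(v_i,v_j)$; you cannot conclude $\dim CF(v_i,v_j)=\dim HF(v_i,v_j)$ in general, since $F$ need not be a surface and Lemma~\ref{lem:Imin=HF} does not apply. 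To close that gap you would have to analyse the differentials --- i.e.\ carry out the projection argument above --- at which point Seidel--Smith is superfluous.
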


\begin{proof}
The author learnt this from \cite[Section 18]{Seidel_book}. Intuitively, $V_i$ and $V_j$ have the same intersection points as $v_i$ and $v_j$, all lying on the central fibre $F$, i.e.~the fixed locus of $\iota$. Choosing appropriate Floer data, the open mapping theorem implies that all pseudoholomorphic discs contributing to the differential of the Floer complex $CF_M(V_i, V_j)$ must have imagein $F$. This in turn implies that they precisely agree with the discs contributing to the differential of the Floer complex $CF_\Sigma (v_i, v_j)$. The equality is then immediate.
\end{proof}

%We start by recording some useful features of Lefschetz stabilisations the follow from Picard-Lefschetz theory. 

\begin{lma}\label{lma:equivariant_DT}
Let $(F, M; \{ (v_i, V_i) \})$ be a Lefschetz one-stabilisation. Then for any $i$, the Dehn twist $\tau_{V_i}$ has a representative such that:
\begin{itemize}
\item $\tau_{V_i}$ is induced by an automorphism of the base $\C$ that fixes the critical values of the fibration set-wise.  In particular, for any matching cycle $S$, $\tau_{V_i}(S)$ is also a matching cycle.

\item $\tau_{V_i}$ commutes with the involution $\iota$; in particular, it fixes the central fibre $F$ set-wise. 

\item When restricted to the fixed locus of $\iota$, i.e.~the central fibre $F$, we have that $\tau_{V_i}|_F = \tau_{v_i}$. 
\end{itemize}
\end{lma}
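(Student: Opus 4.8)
The plan is to combine Seidel's construction of the Dehn twist along a matching cycle with the $\Z/2$--symmetry of the one-stabilisation. Recall that after the deformation made above the critical values of $\pi$ lie at the $(2k)$--th roots of unity, that $c_i$ and $c_{i+k} = -c_i$ are antipodal, and that the matching path $\gamma_i$ defining $V_i$ is the straight diameter through $c_i$; in particular $z \mapsto -z$ preserves $\gamma_i$ (reversing it) while fixing $0$, and correspondingly $\iota$ preserves $V_i$ and restricts to $v_i = V_i \cap \pi^{-1}(0) = V_i \cap F$ on the fixed locus $F = \operatorname{Fix}(\iota) = \pi^{-1}(0)$.

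For the first bullet I would invoke Seidel's realisation of the matching cycle Dehn twist (see \cite{Seidel_book} and \cite{Seidel_suspending}). Near $V_i$ the fibration is standard: a neighbourhood of $V_i$, together with $\pi$, is identified with the $A_1$--model $\{z_0^2 + \dots + z_{n+1}^2 = \epsilon\} \xrightarrow{\,z_{n+1}\,} \C$, in which $V_i$ is the real $S^{n+1}$ and the two critical values sit at $z_{n+1} = \pm\sqrt{\epsilon}$; in this model $\tau_{V_i}$ admits a representative that is fibered over a diffeomorphism $\theta_i$ of the base, namely the half-twist supported in a thin neighbourhood of $\gamma_i$ that exchanges $c_i$ and $c_{i+k}$ and is the identity near every other critical value. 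A symplectomorphism fibered over such a $\theta_i$ carries fibres to fibres and respects the vanishing cycle data, hence takes the matching cycle over a path $\delta$ to the matching cycle over $\theta_i(\delta)$; this is the first bullet and its consequence for matching cycles.

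For the other two bullets I would carry out this construction $\iota$--equivariantly. Since $\gamma_i$ is $z \mapsto -z$--invariant and $\iota$ covers $z \mapsto -z$ while preserving the adapted almost complex structure, the diffeomorphism $\theta_i$ can be chosen to commute with $z \mapsto -z$ (build its defining cut-off to be invariant under $(x,y) \mapsto (-x,-y)$ in coordinates in which $\gamma_i$ is the real axis), and an $\iota$--invariant fibered Weinstein neighbourhood of $V_i$ exists since the $\Z/2$--action is finite; with these choices the resulting representative of $\tau_{V_i}$ commutes with $\iota$, which is the second bullet. In particular it preserves $F$ and fixes $v_i$ set-wise, and its restriction $\tau_{V_i}|_F$ is a compactly supported symplectomorphism of $F$ supported near $v_i$. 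One identifies this restriction with $\tau_{v_i}$ by a computation in the $A_1$--model --- for instance using that $\tau_{V_i}$ is the half-twist over $\gamma_i$, whose restriction to $\pi^{-1}(0)$ realises a square root of the monodromy $\tau_{v_i}^2$ around a small loop enclosing $c_i$ and $c_{i+k}$, namely $\tau_{v_i}$ itself --- together with the fact that away from the neighbourhood of $v_i$ both $\tau_{V_i}|_F$ and $\tau_{v_i}$ are the identity. This gives the third bullet; all of the identities above are to be read up to compactly supported Hamiltonian isotopy.

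The step I expect to be the main obstacle is getting all three properties to hold simultaneously for one representative: the first bullet is about the fibered model of the matching cycle Dehn twist, while the last two are about its compatibility with the symmetry $\iota$ and with the central fibre. Thus the technical heart of the argument is to check that Seidel's fibered construction of $\tau_{V_i}$ can be performed $\iota$--equivariantly, given the $\Z/2$--symmetry of the whole configuration, and then to pin down the restriction to $F$ of that single equivariant representative as the intrinsic Dehn twist $\tau_{v_i}$ --- the latter being, in essence, the content of \cite{Seidel_suspending}.
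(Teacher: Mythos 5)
Your proposal is correct and follows essentially the same route as the paper, which simply invokes the standard fibered model of a Dehn twist along a matching cycle (Seidel's book, Figure 16.3) and observes that for an antipodally symmetric matching path the construction can be made $\iota$--equivariant with the stated restriction to the central fibre. The only point to be slightly careful about is the third bullet: ``a square root of the monodromy $\tau_{v_i}^2$'' does not by itself single out $\tau_{v_i}$, so the identification $\tau_{V_i}|_F = \tau_{v_i}$ should rest on the explicit computation in the local $A_1$--model (equivalently, on \cite{Seidel_suspending}), exactly as you indicate.
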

This is a standard model for a Dehn twist in a matching cycle -- see e.g. \cite[Figure 16.3]{Seidel_book}. (Note $\tau_{V_i}$ has compact support: it isn't strictly the lift of an automorphism of the base $\C$, but rather agrees with that lift outside of a neighbourhood of the horizontal boundary.) For an illustration, see Figure \ref{fig:DT}.

\begin{figure}[htb]
\begin{center}
\includegraphics[scale=0.25]{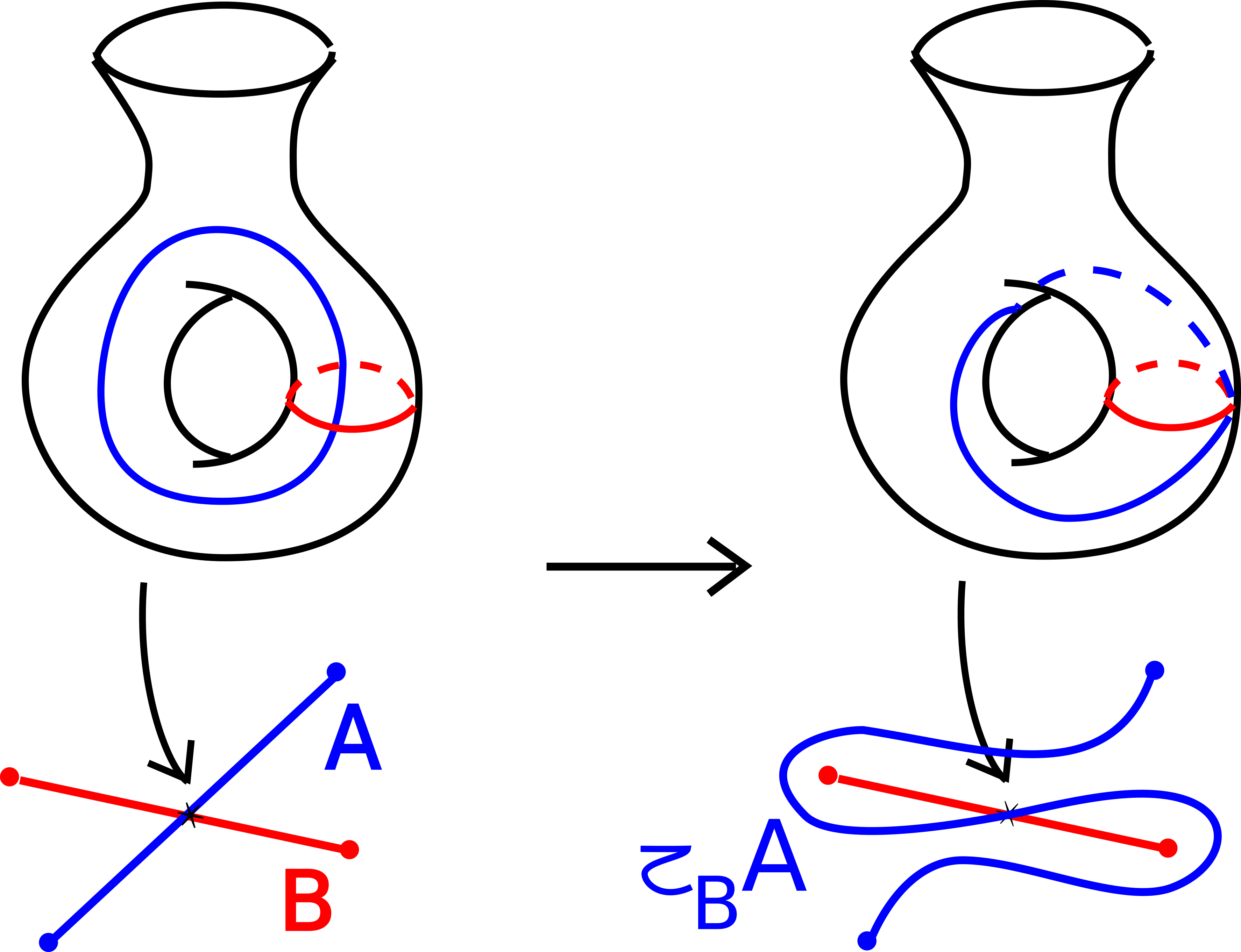}
% or [scale=0.85]
\caption{$\iota$--equivariant model for a Dehn twist.}
\label{fig:DT}
\end{center}
\end{figure}

For a Lefschetz stabilisation of any length, we will hereafter assume that we have picked representatives for Dehn twists in each of the Lagrangian spheres that satisfy the conditions in Lemma \ref{lma:equivariant_DT}.

A key ingredient will be the following consequence of Seidel-Smith's work  \cite{SeidelSmith}:

%\blue{Our stabilisations carry involutions that satisfy these hypotheses; can iteratively apply Seidel Smith... -- fill out a bit.}
%We recall the following case of Seidel-Smith's work  \cite[Theorem 1]{SeidelSmith}:

\begin{thm} \label{thm:SeidelSmith}  \cite[Theorem 1]{SeidelSmith}.
Fix a Lefschetz $(n-1)$--stabilisation $(\Sigma, M; \{ (v_i, V_i) \} )$
as in the set-up for Theorem \ref{thm:main}. 
Let 
$$
\Phi = \prod_j \tau^{\alpha_j}_{V_{i_j}}
$$
be a word in the Dehn twists in the $V_i$, with $\alpha_j \in \mathbb{Z}$, and 
$$
\phi = \prod_j \tau^{\alpha_j}_{v_{i_j}}
$$
be the same word in the Dehn twists in the $v_i$.
Then, viewing $\Phi$ and $\phi$ as symplectomorphisms of $M$ and $\Sigma$ respectively, there is an inequality of the dimensions of the Floer cohomologies 
$$
\dim HF(\Phi (V_i), V_j) \geq \dim HF( \phi (v_i), v_j)
$$
for all $i, j =1, \ldots, k$. 
\end{thm}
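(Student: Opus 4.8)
The plan is to deduce this from Seidel--Smith's localisation theorem for $\Z/2$--equivariant Floer cohomology applied to the involution $\iota$ on $M$. First I would reduce to the case of a Lefschetz \emph{one}-stabilisation and then iterate: if $(\Sigma, M; \{(v_i, V_i)\})$ is an $(n-1)$--stabilisation, pass through the intermediate Liouville domains $\Sigma = M_1, M_2, \ldots, M_n = M$ of the sequence of one-stabilisations, so it suffices to prove, for each one-stabilisation $(F, M'; \{(u_i, U_i)\})$ with associated involution $\iota$, that $\dim HF(\Psi(U_i), U_j) \geq \dim HF(\psi(u_i), u_j)$ where $\Psi = \prod_j \tau_{U_{i_j}}^{\alpha_j}$ and $\psi$ is the corresponding word downstairs. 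Chaining these inequalities gives the result.

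The key point is that, by Lemma~\ref{lma:equivariant_DT}, each $\tau_{U_i}$ has a representative commuting with $\iota$, restricting on the fixed locus $F$ to $\tau_{u_i}$, and sending matching cycles to matching cycles. Hence $\Psi$ is $\iota$--equivariant and $\Psi|_F = \psi$. Moreover each $U_i$ is a matching cycle and thus $\iota$--invariant, with $U_i \cap F = u_i$; since $\Psi(U_i)$ is again a matching cycle, it too is $\iota$--invariant with $\Psi(U_i) \cap F = \psi(u_i)$. So the pair $(\Psi(U_i), U_j)$ is a pair of $\iota$--invariant Lagrangians in $M'$, with fixed parts $(\psi(u_i), u_j)$ in $F$. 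Seidel--Smith's framework requires more than set-theoretic invariance: one needs the Lagrangians to be exact, to carry the $\Z/2$--equivariant structure (Pin/$\operatorname{stable-normal-trivialisation}$ data in their setting — here, since we work with $\Z/2$ coefficients and no gradings, the obstruction-theoretic input is mild), and one needs a regular $\iota$--invariant almost complex structure. Granting this — and the involution $\iota$ preserves an adapted $J$ by construction — their Theorem~1 yields a spectral sequence from $HF_{\Z/2}^*(\Psi(U_i), U_j)$ (or more precisely from the Tate/Borel construction, with the localisation theorem giving the fixed-point comparison) converging to $HF^*$ of the fixed parts, and in particular the Smith-type inequality $\dim HF(\Psi(U_i), U_j) \geq \dim HF(\psi(u_i), u_j)$. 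This is exactly the cited \cite[Theorem 1]{SeidelSmith}, so the bulk of the argument is checking that our geometric setup meets its hypotheses.

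The main obstacle I anticipate is verifying the hypotheses of Seidel--Smith's theorem precisely: namely (i) that $\Psi(U_i)$ and $U_j$, together with their fixed loci, sit inside $M'$ as \emph{exact} Lagrangians in a way compatible with the Liouville/convexity structure at the boundary — this uses that $\tau_{U_i}$ is compactly supported and that matching cycles are exact — and (ii) the transversality/regularity input: one must perturb to achieve a regular $\iota$--invariant $J$, and one must know the relevant moduli spaces of Floer trajectories (both equivariant and on the fixed locus $F$) are cut out transversally, so that the localisation spectral sequence is valid. Seidel--Smith set this up carefully in \cite{SeidelSmith} for matching cycles in Lefschetz fibrations — which is precisely our situation — so I would cite their construction rather than redo it, and the remaining work is bookkeeping: confirming that iterating one-stabilisations is harmless (each step only adds a pair of complex dimensions and the previous $M$ embeds as the central fibre, so all exactness and invariance data restrict correctly), and that the words $\Psi$ and $\psi$ genuinely correspond under the restriction-to-fixed-locus map, which is immediate from Lemma~\ref{lma:equivariant_DT}.
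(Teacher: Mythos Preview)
Your proposal is correct and follows essentially the same approach as the paper: reduce to a one-stabilisation, use the $\iota$--equivariance of the Dehn twists (Lemma~\ref{lma:equivariant_DT}) to see that $\Phi(V_i)$ and $V_j$ are $\iota$--invariant matching cycles with fixed parts $\phi(v_i)$ and $v_j$, apply \cite[Theorem~1]{SeidelSmith}, and iterate down the tower of stabilisations. One small correction: the key hypothesis in Seidel--Smith is the existence of a \emph{stably trivial normal structure} on $(M,\iota;\Phi(V_i),V_j)$ in the sense of \cite[Definition~18]{SeidelSmith}, and this is a genuine geometric condition that is not made ``mild'' by working over $\Z/2$ with no gradings --- rather, it is satisfied here precisely because the Lagrangians are matching cycles for the Lefschetz fibration, which the paper notes directly.
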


\begin{proof}
Fix $i$ and $j$. %First notice that by our assumption on Dehn twists, both $\Phi(V_i)$ and $V_j$ are matching cycles for the Lefschetz fibration on $M$. 
Consider the symplectic  involution $\iota$ on $M$. By our assumption on Dehn twists, both $\Phi(V_i)$ and $V_j$ are invariant set-wise under $\iota$. Moreover, as they are matching cycles for the Lefschetz fibration on $M$, one readily gets a stably trivial normal structure on $(M, \iota; \Phi(V_i), V_j)$ in the sense of \cite[Definition 18]{SeidelSmith}. 
Thus we can apply \cite[Theorem 1]{SeidelSmith} to the pair $(\Phi(V_i), V_j)$, which gives
$$
\dim HF ( \Phi(V_i) , V_j ) \geq \dim HF ( \Phi(V_i)^\inv , V_j^\inv ) 
$$

Say that $V_i^\inv = U_i$; observe that 
$$
 \Phi(V_i)^\inv = \prod_j \tau^{\alpha_j}_{U_{i_j}}   (U_i).
$$
Now proceed inductively.
\end{proof}

Examining Seidel and Smith's proof \cite[Section 3]{SeidelSmith}, we also see that $\dim HF(\Phi(V_i), V_j)$ and $\dim HF \phi((v_i), v_j)$ have the same parity. Intuitively, this is because the non-invariant generators for $HF(\Phi(V_i), V_j)$ appear in pairs due to the symmetry from the involution. We record the following consequence separately.

\begin{cor} \label{prp:dim_equalities}
Let $(\Sigma, M; \{ (v_i, V_i) \} )$, $\Phi$ and $\phi$ be as in Theorem \ref{thm:SeidelSmith}; suppose moreover that $\dim HF(\Phi(V_i), V_j))$ is equal to zero or one. Then there is an equality
$$
\dim HF ( \Phi(V_i) , V_j ) = \dim HF ( \phi(v_i) , v_j) = 0 \, \, \text{or} \, \, 1.
$$
\end{cor}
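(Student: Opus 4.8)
The plan is to combine the inequality from Theorem~\ref{thm:SeidelSmith} with the parity statement that Seidel and Smith's argument yields, namely that $\dim HF(\Phi(V_i),V_j)$ and $\dim HF(\phi(v_i),v_j)$ have the same parity. Write $d = \dim HF(\Phi(V_i),V_j)$ and $e = \dim HF(\phi(v_i),v_j)$. Theorem~\ref{thm:SeidelSmith} gives $d \geq e$, and the parity observation gives $d \equiv e \pmod 2$; hence $d - e$ is a non-negative even integer.

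Now I would simply case-split on the hypothesis that $d \in \{0,1\}$. If $d = 0$, then $0 \geq e \geq 0$ forces $e = 0$, so $d = e = 0$. If $d = 1$, then $e \in \{0,1\}$ by the inequality, but parity rules out $e = 0$ (since $1 \not\equiv 0 \pmod 2$), so $e = 1$ and $d = e = 1$. In both cases $\dim HF(\Phi(V_i),V_j) = \dim HF(\phi(v_i),v_j) \in \{0,1\}$, which is exactly the claim. So the corollary is a one-line deduction once the parity input is in hand.

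The only real content is justifying the parity claim, but the excerpt explicitly tells us this: ``Examining Seidel and Smith's proof \cite[Section 3]{SeidelSmith}, we also see that $\dim HF(V_i,V_j)$ and $\dim HF(v_i,v_j)$ have the same parity.'' The same reasoning applies verbatim to the pair $(\Phi(V_i),V_j)$ and $(\phi(v_i),v_j)$, since these are again matching cycles with a stably trivial normal structure in the sense of \cite[Definition 18]{SeidelSmith}, exactly as in the proof of Theorem~\ref{thm:SeidelSmith} — one runs the same inductive argument over the sequence of one-stabilisations. So I would phrase the proof as: invoke Theorem~\ref{thm:SeidelSmith} for the inequality, invoke the parity refinement of \cite[Section 3]{SeidelSmith} (applied inductively along the stabilisation tower just as in the previous proof) for the congruence, and conclude by the elementary case analysis above.

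I do not anticipate a genuine obstacle here — the statement is essentially a formal corollary. The one place requiring a modicum of care is making sure the parity refinement really does propagate through an arbitrary-length Lefschetz stabilisation and through the word $\Phi$ in the Dehn twists, rather than just through a single one-stabilisation; but this is handled by the same ``say that $V_i^{\inv} = U_i$ and proceed inductively'' device used in the proof of Theorem~\ref{thm:SeidelSmith}, since the parity statement, like the inequality, is preserved at each inductive step.
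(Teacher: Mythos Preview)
Your proposal is correct and matches the paper's approach exactly: the paper states the corollary immediately after recording the parity observation from \cite[Section 3]{SeidelSmith}, leaving the deduction implicit, and your write-up simply spells out that implicit step (inequality plus parity forces equality when the top side is $0$ or $1$). Your extra care in noting that the parity statement must be propagated through the stabilisation tower and the word $\Phi$ via the same inductive device as in Theorem~\ref{thm:SeidelSmith} is appropriate and does not deviate from the paper's reasoning.
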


We then get the following.

\begin{prp}\label{prp:dim_equalities_id}
Let $(\Sigma^2, M^{2n}; \{ (v_i, V_i) \} )$, $\Phi$ and $\phi$ be as in Theorem \ref{thm:SeidelSmith}, and assume that $\Phi = \id \in \pi_0 \, \Symp^c(M)$.  
Let $c_1$ and $c_2$ be  Lagrangian arcs or exact Lagrangian $S^1$'s. If they are both Lagrangian arcs, we assume that their end-points are distinct. Then we have 
$$
\dim HF (c_1, c_2) \geq \dim HF (\phi(c_1), c_2).
$$
Moreover, equality holds whenever the left-hand side is equal to zero or one.
\end{prp}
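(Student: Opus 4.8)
The plan is to bootstrap from the spheres of the configuration — for which the inequality is Theorem~\ref{thm:SeidelSmith} and the equality statement is Corollary~\ref{prp:dim_equalities} — to arbitrary Lagrangian arcs and circles, by enlarging both $\Sigma$ and the configuration so that $c_1$ and $c_2$ themselves become matching--cycle spheres.

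First I would reduce to the case where $c_1$ and $c_2$ are both exact Lagrangian circles. If $c_1$ is a Lagrangian arc, attach a Weinstein $1$-handle to $\Sigma$ along its two endpoints; if $c_2$ is also an arc, the hypothesis that the (four) endpoints are distinct lets us attach a second handle disjoint from the first. This produces a Weinstein surface $\Sigma'$ carrying exact Lagrangian circles $s_i = c_i\cup(\text{core})$ — exactness of $s_i$ being arranged by a small adjustment of the handle's Liouville structure — and, as recorded after Lemma~\ref{lem:Imin=HF}, $HF_{\Sigma'}(s_1,s_2)\cong HF_\Sigma(c_1,c_2)$. Since $\phi$ is a product of Dehn twists in the $v_i$, it is supported in the interior of $\Sigma$; extending it by the identity over the new handles gives a symplectomorphism of $\Sigma'$, still represented by the same word in the relevant Dehn twists, with $\phi(s_i)=\phi(c_i)\cup(\text{core})$, so $HF_{\Sigma'}(\phi(s_1),s_2)\cong HF_\Sigma(\phi(c_1),c_2)$. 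Hence it suffices to prove the statement on $\Sigma'$ for $s_1,s_2$; renaming, we may assume from now on that $c_1,c_2$ are exact circles on $\Sigma$.

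Next, enlarge the stabilisation: perform the analogous sequence of Lefschetz one-stabilisations starting instead from $(\Sigma, \{v_i\}\cup\{c_1,c_2\})$, obtaining a Lefschetz $(n-1)$--stabilisation $(\Sigma, M'; \{(v_i,V_i)\}_i\cup\{(c_1,C_1),(c_2,C_2)\})$. Adding vanishing cycles only attaches further handles, away from the $V_i$, so there is an exact open embedding $M\hookrightarrow M'$; the compactly supported symplectic isotopy from $\Phi$ to $\id$ on $M$ extends by the identity to $M'$, and, since $C_1$ is a sphere of dimension $n\geq 2$, the flux of the resulting Lagrangian isotopy from $C_1$ to $\Phi(C_1)$ lies in $H^1(C_1)=0$, so $\Phi(C_1)$ is Hamiltonian isotopic to $C_1$ in $M'$ and $\dim HF(\Phi(C_1),C_2)=\dim HF(C_1,C_2)$. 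Moreover $\Phi=\prod_j\tau^{\alpha_j}_{V_{i_j}}$ is still a word in the Dehn twists of the enlarged configuration, with the same word $\phi=\prod_j\tau^{\alpha_j}_{v_{i_j}}$ downstairs. Applying Theorem~\ref{thm:SeidelSmith} with $(C_1,C_2)$ in the role of $(V_i,V_j)$, and using $\dim HF(C_1,C_2)=\dim HF(c_1,c_2)$ (preservation of $\dim HF$ under a Lefschetz one-stabilisation, iterated $n-1$ times), we obtain
$$
\dim HF(c_1,c_2)=\dim HF(\Phi(C_1),C_2)\geq\dim HF(\phi(c_1),c_2).
$$
When the common left-hand side is $0$ or $1$, Corollary~\ref{prp:dim_equalities} upgrades this to an equality, which is the final assertion. (For $n=1$ the statement is immediate since $\Phi=\phi$.)

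I expect the main difficulty to be bookkeeping rather than anything conceptual: checking that the reduction to circles genuinely yields a Lefschetz $(n-1)$--stabilisation with exact $s_i$; that the enlarged construction can be carried out compatibly with the $\iota$--equivariant representatives of the $\tau_{V_i}$ fixed earlier; and that the exact open embedding $M\hookrightarrow M'$ carries $\Phi=\id\in\pi_0\Symp^c(M)$ to $\Phi=\id\in\pi_0\Symp^c(M')$ (so that the new representative of $\Phi$ is still trivial). With these routine points in hand, the conclusion is a direct appeal to the Seidel--Smith inequality and its parity refinement already available.
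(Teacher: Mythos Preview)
Your argument is correct and follows the same overall strategy as the paper: close up arcs to circles by attaching $1$--handles, enlarge the stabilisation so that $c_1,c_2$ become matching spheres $C_1,C_2$ in some $M'\supset M$, transport the isotopy $\Phi\simeq\id$ to $M'$, and then invoke Theorem~\ref{thm:SeidelSmith} and Corollary~\ref{prp:dim_equalities}. The one point of departure is how you pass from a merely \emph{symplectic} isotopy to invariance of $HF$. The paper enlarges once more, to a Weinstein domain $M''\supset M'$ with $H^1(M'')=0$ (obtained by throwing in further vanishing cycles to kill the surviving $H_1$--classes), so that the symplectic isotopy of $\Phi$ is automatically Hamiltonian in $M''$. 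You instead observe that since $C_1\cong S^n$ with $n\geq 2$, the Lagrangian isotopy $\Phi_t(C_1)$ has flux lying in $H^1(C_1)=0$ and is therefore Hamiltonian regardless of $H^1(M')$. Your route is shorter and avoids the auxiliary construction of $M''$; the paper's route has the mild advantage of producing a genuine ambient Hamiltonian isotopy of $\Phi$ itself (rather than just of its action on one Lagrangian), which is more than is needed here but fits with the remark immediately following the paper's proof of Theorem~\ref{thm:main}.
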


\begin{proof}
%We will want to apply Theorem \ref{thm:SeidelSmith}, and Proposition \ref{prp:dim_equalities}. 

Let us consider the case where both of the $c_i$'s are Lagrangian arcs. 
First, add two one-handles to $\Sigma$, one for the boundary of each of the $c_i$, so that the union of $c_i$ and the core of the corresponding handle is an exact Lagrangian $S^1$, say $s_i$. Call the new Liouville domain $\Sigma'$; note $\phi$ extends to a symplectomorphism of $\Sigma'$. As 
\[
HF (c_1, c_2) \cong HF (s_1, s_2) \quad \text{and} \quad   HF (\phi (c_1), c_2) \cong HF (\phi (s_1), s_2)
\]
it is enough to proved the claimed (in)equalities for the $s_i$ instead.

We can construct an $(n-1)$--stabilisation of $(\Sigma'; \{ v_i, s_i \} )$, say $(M', \{ V_i, S_i \})$ such that $M$ is a Liouville subdomain of $M'$. 
%\blue{Something about compatibility of order of the $V_i$, or of the Lefschetz fibrations?}
$\Phi$ is symplectically isotopic to the identity as a compactly supported symplectomorphism of $M$, so, a fortiori, as a symplectomorphism of $M'$. 
Notice that all classes in $H_1(M')$ are induced by classes in $H_1(\Sigma)$ which don't get canceled by any handles when stabilising. In particular, by constructing a Lefschetz stabilisation for $\Sigma$ equipped with a larger collection of exact Lagrangian $S^1$s, we can obtain a Weinstein domain $M''$ with $H^1(M'')=0$, and an open exact embedding $M' \subset M''$. (In fact, we can arrange for $M''$ to be simply connected.) $\Phi$ is symplectically isotopic to the identity as a compactly supported symplectomorphism of $M''$, and so (in $M''$) it must also be Hamiltonian isotopic to the identity.

Now notice that
\begin{multline*}
\dim HF_{M'} (\Phi (S_1), S_2) = \dim HF_{M''} (\Phi (S_1), S_2) = \dim HF_{M''} (S_1, S_2)  \\ = \dim HF_{M'} (S_1, S_2) = \dim HF (s_1, s_2)
\end{multline*}
where for the final equality we use Lemma \ref{lem:equal_HF}. Now the claimed (in)equalities follow from Theorem \ref{thm:SeidelSmith} and Corollary \ref{prp:dim_equalities}.

If  one  (respectively none) of the $c_i$'s is a Lagrangian arc instead, we just make one (respectively zero) handle attachment. %, and the rest of the argument is unchanged. 
\end{proof}

\begin{prp}\label{prp:fix_arc}
Let $(\Sigma, M; \{ (v_i, V_i) \} )$, $\Phi$ and $\phi$ be as in Theorem \ref{thm:SeidelSmith}, and assume that $\Phi = \id \in \pi_0 \Symp^c(M)$.  Let $c$ be a Lagrangian arc on $\Sigma$. Then $\phi$ fixes $c$ up to isotopy rel boundary.
\end{prp}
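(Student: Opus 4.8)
The plan is to detect the isotopy class of a Lagrangian arc by its intersection pattern with enough other arcs, and to feed these intersection numbers through Proposition~\ref{prp:dim_equalities_id}. Concretely, I would argue as follows. Suppose $\phi(c)$ is not isotopic rel boundary to $c$; since both are embedded arcs with the same endpoints, there is then some Lagrangian arc $d$ (with endpoints disjoint from those of $c$) that distinguishes them, i.e.\ $\Imin(\phi(c),d)\ne\Imin(c,d)$. The cleanest way to produce such a $d$ is to take $d=\phi(c')$ for a suitable auxiliary arc $c'$: if $\phi(c)\simeq c$ rel boundary failed, then applying $\phi^{-1}$, the arcs $c$ and $\phi^{-1}(d)$ would already be non-isotopic, so I should instead phrase the criterion symmetrically. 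The usual change-of-coordinates principle on surfaces (Alexander method) says: an essential arc $c$ on $\Sigma$ is determined up to isotopy rel boundary by the collection of numbers $\Imin(c,d)$ as $d$ ranges over arcs, and in fact finitely many $d$ suffice.

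So the first step is: by Lemma~\ref{lma:arc_detection}, choose a Lagrangian arc $d_0$ with $\Imin(c,d_0)=1$; more generally fix a finite collection of Lagrangian arcs $\{d_\alpha\}$, with endpoints away from those of $c$, whose intersection numbers with any arc detect its isotopy class (Alexander method). The second step is the key computation: for each $\alpha$,
\[
\dim HF(c,d_\alpha) \;\ge\; \dim HF(\phi(c),d_\alpha)
\]
by Proposition~\ref{prp:dim_equalities_id} applied with $c_1=c$, $c_2=d_\alpha$. Simultaneously, running the argument with $\phi^{-1}$ in place of $\phi$ (note $\Phi^{-1}=\id$ as well, and $\phi^{-1}$ is the corresponding word) and with $c_1=\phi(c)$, $c_2=\phi^{-1}$-image of $d_\alpha$... actually more simply: apply Proposition~\ref{prp:dim_equalities_id} to the pair $(c, \phi^{-1}(d_\alpha))$ to get $\dim HF(c,\phi^{-1}(d_\alpha))\ge \dim HF(\phi(c), d_\alpha)$, and to the pair $(\phi^{-1}(c), d_\alpha)$ — hmm, one wants the reverse inequality too. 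The point is that $\phi$ acts on $\Sigma$ by a symplectomorphism, so $\dim HF(\phi(c),d_\alpha)=\dim HF(c,\phi^{-1}(d_\alpha))$ tautologically; combined with the inequality from Proposition~\ref{prp:dim_equalities_id} applied to $(c,\phi^{-1}(d_\alpha))$ this gives one direction, and applying it to $(\phi(c),d_\alpha)$ — viewing $\phi(c)$ itself as "$c_1$" and using that $\Phi=\id$ — gives $\dim HF(\phi(c),d_\alpha)\ge \dim HF(\phi(\phi(c)),d_\alpha)$, which is not obviously what I want. The clean fix: since $\Phi=\id$ in $\pi_0\Symp^c(M)$ and $\phi$ is invertible, the \emph{same} hypotheses hold for the word giving $\phi^{-1}$; so Proposition~\ref{prp:dim_equalities_id} yields both $\dim HF(c_1,c_2)\ge \dim HF(\phi(c_1),c_2)$ and $\dim HF(c_1,c_2)\ge \dim HF(\phi^{-1}(c_1),c_2)$ for all admissible $c_1,c_2$. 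Taking $c_1=\phi(c)$ in the second gives $\dim HF(\phi(c),c_2)\ge \dim HF(c,c_2)$, which together with the first (with $c_1=c$) forces
\[
\dim HF(\phi(c),d_\alpha) = \dim HF(c,d_\alpha)\quad\text{for all }\alpha.
\]
By Lemma~\ref{lem:Imin=HF} these Floer dimensions compute $\Imin$, so $\Imin(\phi(c),d_\alpha)=\Imin(c,d_\alpha)$ for every $\alpha$ in the detecting family.

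The third step is to conclude via the Alexander method: two essential embedded arcs with the same endpoints and the same geometric intersection numbers against a detecting family of arcs are isotopic rel boundary; hence $\phi(c)\simeq c$ rel $\partial$. The main obstacle I anticipate is not any of the Floer-theoretic input — that is a black-box application of Proposition~\ref{prp:dim_equalities_id} and Lemma~\ref{lem:Imin=HF} — but rather pinning down the right combinatorial detection statement on the (possibly disconnected, possibly high-genus, boundary-having) surface $\Sigma$ and checking that Lemma~\ref{lma:arc_detection} (plus iterated handle attachments, exactly as in the proof of Proposition~\ref{prp:dim_equalities_id}) really does supply enough \emph{Lagrangian} arcs with the required disjointness of endpoints to separate isotopy classes. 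One must also handle the degenerate case where $c$ is boundary-parallel or otherwise inessential; there $\Imin(c,d)=0$ for all $d$ disjoint from a neighbourhood of $c$, and one argues separately (or simply notes that $\phi$, being supported away from $\partial\Sigma$ appropriately, fixes such $c$). I would present the essential case in detail and dispatch the inessential case in a sentence.
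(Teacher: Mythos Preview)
Your approach is correct but takes a different route from the paper's. You force the equality $\dim HF(\phi(c),d)=\dim HF(c,d)$ for \emph{all} arcs $d$ by running Proposition~\ref{prp:dim_equalities_id} once for $\phi$ and once for $\phi^{-1}$, and then appeal to a general detection principle (an arc is determined up to isotopy rel boundary by its geometric intersection numbers with all other arcs). The paper instead never leaves the regime $\Imin\in\{0,1\}$, where the equality clause of Proposition~\ref{prp:dim_equalities_id} applies directly and the $\phi^{-1}$ trick is unnecessary, and uses only three auxiliary arcs: two parallel push-offs $c_1,c_2$ of $c$ bounding a tubular neighbourhood $\nu$, so that $\Imin(\phi(c),c_i)=\Imin(c,c_i)=0$ forces $\phi(c)$ (up to isotopy) into either $\nu$, in which case we are done, or into $\Sigma\setminus\nu$; in the latter case Lemma~\ref{lma:arc_detection}, applied to $\phi(c)$, supplies an arc $a\subset\Sigma\setminus\nu$ with $\Imin(a,\phi(c))=1$ but $\Imin(a,c)=0$, contradicting Proposition~\ref{prp:dim_equalities_id}. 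The paper's argument is shorter and sidesteps exactly the combinatorial detection statement you flag as the main obstacle; your approach, on the other hand, yields the stronger conclusion that $\phi$ preserves \emph{all} geometric intersection numbers between arcs, which is not needed here but could be of independent use.
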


\begin{proof}
%Assume first that $c$ is any Lagrangian arc.
 Let $c_1$ and $c_2$ be `parallel' Lagrangian arcs, disjoint from $c$, such that $c_1 \cup c_2$ is the boundary of a tubular neighbourhood of $c$ in $\Sigma$, say $\nu$.
By Proposition \ref{prp:dim_equalities_id}, we have
$$
\Imin ( \phi(c), c_i) = \dim HF (\phi(c), c_i) = 0.
$$
For each $i$, this implies that after isotopy rel boundary, $\phi(c)$ is disjoint from $c_1$ and $c_2$. As $\phi(c)$ has the same boundary points as $c$, it follows that $\phi(c)$ must be contained in $\nu$, where there is a unique Lagrangian arc with those boundary points. 
%This implies that up to isotopy either $\phi(c) \subset \nu$ or $\phi(c) \subset \Sigma \backslash \nu$. If $\phi(c) \subset \nu$, we would be done. Assume  that $\phi(c) \subset \Sigma \backslash \nu$. By Lemma \ref{lma:arc_detection}, there exists a Lagrangian arc $a$ in $\Sigma$, with $a \subset \Sigma \backslash \nu$, such that $\Imin (a, \phi(c)) = 1$. As $\Imin (a,c) = 0$, we get a contradiction by Proposition \ref{prp:dim_equalities_id}. 
\end{proof}
%
%\begin{rmk}
%With a bit more care one can similarly show that $\phi$ fixes any exact Lagrangian $S^1$ up to isotopy, though we won't need that.
%\end{rmk}

%\blue{We could also arrange to fix exact Lag $S^1$'s -- put this as a remark?}

Our main theorem now follows:

\begin{proof}[Proof of Theorem \ref{thm:main}]
Let $(\Sigma, M^{2n}; \{ (v_i, V_i) \} )$, $\Phi$ and $\phi$ be as in Theorem \ref{thm:SeidelSmith}, and assume that $\Phi = \id \in \pi_0 \, \Symp^c(M)$. We want to show that $\phi = \id \in \pi_0 \, \Symp^c (\Sigma)$. As we are in real dimension two, it's enough to show that $\phi$ is smoothly isotopic (rel boundary) to the identity. We can pick a collection of pairwise disjoint embedded arcs such that their complement in $\Sigma$ is a finite union of discs. By Proposition \ref{prp:fix_arc}, any embedded arc in $\Sigma$ is fixed up to isotopy rel boundary, so we are done.
\end{proof}

\begin{rmk}
We have seen in our proof that for the conclusion of Theorem \ref{thm:main} to hold, it's enough for $\prod \tau^{m_j}_{V_{i_j}}$ to be Hamiltonian isotopic to the identity in some Weinstein domain $M'$ such that there's an exact open embedding $M \subset M'$.
\end{rmk}

\subsection{Restricting to compact objects}

The hypothesis of Theorem \ref{thm:main} is quite strong: $\Phi$ has to be symplectically isotopic, through compactly supported maps, to the identity. In plenty of settings, one could be given an a priori weaker hypothesis -- for instance, asking that $\Phi$ acts as the identity on a flavour of the Fukaya category. 

We expect that if $\Phi$ acts as the identity on a (suitably refined version of) the wrapped Fukaya category of $M$, then the conclusion of our theorem still holds -- but the technical framework to make this rigorous (e.g. a suitable follow-up work to \cite{GPS}) is not yet in place. Instead, we focus on the flavour of the Fukaya category with the least information, $\Fuk(M)$, as in \cite[Section 9]{Seidel_book}.

As we are only allowed to use information about compact objects, we need some further preliminaries. 

\begin{lma}
Let $\Sigma$ be an exact surface, and $v_1, \ldots, v_k$ a collection of embedded simple closed curves on $\Sigma$.
 Then there exists an exact surface $\tilde{\Sigma}$, an exact embedding $\Sigma \subset \tilde{\Sigma}$,
and a collection of exact embedded $S^1$s in $\tilde{\Sigma}$, say $w_1, \ldots, w_l$, such that for any product $\sigma = \prod \tau^{\pm 1}_{v_{i_j}}$ of Dehn twists in the $v_i$, if $\sigma(w_i)$ is isotopic to $w_i$ for each $i$, then $\sigma$ is the identity in $\pi_0 \, \Symp^c (\Sigma)$. 
\end{lma}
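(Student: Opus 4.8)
The plan is to reduce the statement to a purely topological fact about mapping class groups of surfaces with boundary, namely that a mapping class is determined by its action on a suitable finite collection of isotopy classes of simple closed curves together with arcs. First I would recall the standard fact (the "Alexander method", see e.g. Farb–Margalit) that if $\Sigma$ is a compact surface with boundary and $\mathcal{C}$ is a collection of essential simple closed curves and properly embedded arcs that fills $\Sigma$ (i.e. $\Sigma$ cut along $\mathcal{C}$ is a union of discs), pairwise in minimal position, then any $\sigma \in \Mod(\Sigma, \partial)$ fixing each element of $\mathcal{C}$ up to isotopy is isotopic to the identity rel $\partial$. The issue is that such a filling collection in general needs \emph{arcs}, not just closed curves, and the lemma must be phrased with closed curves $w_i$ because only closed curves become Lagrangian spheres under Lefschetz stabilisation. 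So the real content is: enlarge $\Sigma$ to $\tilde\Sigma$ so that every arc we need can be "closed up" into an embedded $S^1$ in $\tilde\Sigma$, and check that the hypothesis on $\sigma$ transfers correctly.

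The key steps, in order, would be: (1) Choose a finite collection of properly embedded arcs and simple closed curves in $\Sigma$ that fills $\Sigma$; this exists by a standard argument. (2) For each arc $a$ in this collection, attach a $1$-handle to $\Sigma$ connecting the two boundary points of $a$ (exactly as in the proof of Proposition~\ref{prp:dim_equalities_id}), and let $w$ be the union of $a$ with the core of that handle — an exact embedded $S^1$; closed curves in the filling collection are kept as they are. Doing this for all arcs simultaneously produces the exact surface $\tilde\Sigma$, the exact embedding $\Sigma \subset \tilde\Sigma$, and the collection $w_1, \ldots, w_l$. One has to be mildly careful to attach the handles in disjoint collar regions so the $w_i$ remain embedded and pairwise in controlled position; after the handle attachments $\sigma$ extends (by the identity on the handles) to a compactly supported symplectomorphism of $\tilde\Sigma$, still a product of Dehn twists in the images of the $v_i$. (3) Observe that $\sigma(w_i)$ is isotopic to $w_i$ in $\tilde\Sigma$ if and only if $\sigma(a_i)$ is isotopic to $a_i$ rel boundary in $\Sigma$ (for the arcs) resp. $\sigma(v_j) \simeq v_j$ (for the closed curves) — this is because $\sigma$ is supported in $\Sigma \subset \tilde\Sigma$ and the handle cores are fixed, so an isotopy on one side can be promoted to the other, just as in the Floer-cohomology identities of Proposition~\ref{prp:dim_equalities_id}. (4) Conclude: if $\sigma(w_i) \simeq w_i$ for all $i$, then $\sigma$ fixes every element of the original filling collection up to isotopy rel boundary, hence by the Alexander method $\sigma = \id$ in $\pi_0 \Symp^c(\Sigma) = \Mod(\Sigma, \partial)$.

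The main obstacle, I expect, is step~(3): making precise and correct the equivalence between "$\sigma$ fixes the closed-up curve $w_i$ in $\tilde\Sigma$" and "$\sigma$ fixes the arc $a_i$ rel boundary in $\Sigma$". One direction (arc fixed $\Rightarrow$ closed curve fixed) is immediate since $\sigma$ is the identity on the handles. The converse requires knowing that an ambient isotopy of $\tilde\Sigma$ taking $\sigma(w_i)$ to $w_i$ can be modified to preserve the decomposition $\tilde\Sigma = \Sigma \cup (\text{handles})$ and to fix the handle cores, so that it restricts to an isotopy of arcs in $\Sigma$ rel $\partial\Sigma$; this is a standard innermost-disc / irreducibility argument about surfaces, but it needs the $w_i$ to be chosen so that each intersects the cocore of its handle exactly once and is disjoint from the other handles, which one arranges in step~(2). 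A secondary, more cosmetic point is simply ensuring the filling collection in step~(1) can be taken with all the arcs having pairwise-disjoint endpoints on $\partial\Sigma$, which is harmless to arrange.
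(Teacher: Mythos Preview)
Your proposal is correct and follows essentially the same approach as the paper: pick a finite collection of arcs whose isotopy classes detect the identity in $\Mod(\Sigma,\partial)$, attach a one-handle for each arc, and take the $w_i$ to be the resulting closed curves. The paper's proof is in fact considerably terser than yours and does not spell out your step~(3) at all, so the care you take there (one intersection with the cocore, innermost-disc argument) is a genuine improvement in detail rather than a deviation in strategy.
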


\begin{proof}
It's enough for $\sigma$ to fix a finite collection of Lagrangian arcs in $\Sigma$; take $\tilde{\Sigma}$ to be the surface obtained by attaching one handles for each of these, and the $w_i$ to be the resulting collection of exact $S^1$s. 
\end{proof}

We note the further, more technical statement:

\begin{lma}\label{lma:non_cpt}
Let $\Sigma$ be an exact surface, and $v_1, \ldots, v_k$ a collection of embedded exact simple closed curves on $\Sigma$.
 Then there exists an exact surface $\tilde{\Sigma}$ together with an exact embedding $\Sigma \subset \tilde{\Sigma}$, a collection of exact embedded $S^1$s in $\tilde{\Sigma}$, say $w_1, \ldots, w_m$, and a subset $I \subset \{ 1, \ldots, m \}^2$ of pairs of indices  satisfying the following: 
\begin{itemize}
\item For any $(i,j) \in I$, we have that $i \neq j$, and $I_\text{min}(w_i, w_j)$ is 0 or $1$.
\item Suppose that $\sigma = \prod \tau^{\alpha_j}_{v_{i_j}}$ is a product of Dehn twists in the $v_i$ such that for all $(i,j) \in I$, we have $I_\text{min} (\sigma(w_i), w_j) = I_\text{min}(w_i, w_j)$.  Then $\sigma$ is the identity in $\pi_0 \, \Symp^c (\Sigma)$. 
\end{itemize}

\end{lma}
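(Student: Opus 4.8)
\emph{Proof proposal.}
The plan is to combine the reduction of the preceding lemma with the elementary observation that an embedded arc is determined, up to isotopy rel boundary, by its disjointness from two parallel push-offs --- a condition that records nothing more than $\Imin$ taking the value $0$. First I would invoke the argument of the preceding lemma to fix a finite collection of pairwise disjoint Lagrangian arcs $a_1,\dots,a_p$ in $\Sigma$, with pairwise distinct endpoints, such that $\sigma=\id$ in $\pi_0\,\Symp^c(\Sigma)$ whenever $\sigma(a_r)$ is isotopic to $a_r$ rel $\partial\Sigma$ for every $r$. For each $r$, choose two parallel Lagrangian push-offs $a_r^{+},a_r^{-}$ of $a_r$, disjoint from one another and from $a_r$, cobounding a closed disc neighbourhood $\nu_r\supset a_r$ of $a_r$ in $\Sigma$ with $\partial\nu_r\cap\operatorname{int}\Sigma=a_r^{+}\cup a_r^{-}$, and with all $6p$ endpoints distinct. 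Attaching a $1$--handle to $\Sigma$ along each of these $3p$ pairs of endpoints, in pairwise disjoint intervals of $\partial\Sigma$, produces an exact surface $\tilde\Sigma$ with an exact embedding $\Sigma\subset\tilde\Sigma$, and closing the arcs up with the handle cores (chosen so that the results are exact) gives exact embedded circles $\bar a_r,\bar a_r^{+},\bar a_r^{-}$. These $m=3p$ circles are the $w_i$, and I would take $I=\{(\bar a_r,\bar a_r^{+}),\,(\bar a_r,\bar a_r^{-}):r=1,\dots,p\}$. Since $a_r$ is disjoint from $a_r^{\pm}$, so is $\bar a_r$ from $\bar a_r^{\pm}$, so $\Imin(w_i,w_j)=0$ for every $(i,j)\in I$.

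Now suppose $\sigma=\prod\tau_{v_{i_j}}^{\alpha_j}$ satisfies $\Imin(\sigma(w_i),w_j)=\Imin(w_i,w_j)$ for all $(i,j)\in I$, i.e.\ $\Imin(\sigma(\bar a_r),\bar a_r^{\pm})=0$ for every $r$. Because $\sigma$ is supported in $\Sigma$ it fixes $\partial\Sigma$, so $\sigma(\bar a_r)$ is $\sigma(a_r)$ closed up with the same handle core; in particular it runs once over the handle attached to $a_r$, meeting that handle's cocore once, whereas $\bar a_r^{\pm}$ run over different handles, so $\sigma(\bar a_r)$ and $\bar a_r^{\pm}$ are non-isotopic in $\tilde\Sigma$. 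Combining $\Imin=\dim HF$ for Lagrangian arcs (Lemma~\ref{lem:Imin=HF}), the handle-attachment isomorphism $HF(c_1,c_2)\cong HF(s_1,s_2)$ recalled just before it, and the fact that $\dim HF$ computes $\Imin$ for non-isotopic embedded exact circles, I would deduce $\Imin(\sigma(a_r),a_r^{\pm})=\Imin(\sigma(\bar a_r),\bar a_r^{\pm})=0$ for all $r$. By additivity of geometric intersection numbers over the disjoint multiarc $a_r^{+}\cup a_r^{-}$ (equivalently, an innermost-bigon argument), $\sigma(a_r)$ can then be isotoped rel $\partial\Sigma$ to be disjoint from $a_r^{+}\cup a_r^{-}$. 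As this multiarc separates $\Sigma$ into $\nu_r$ and $\overline{\Sigma\setminus\nu_r}$, while the endpoints of $\sigma(a_r)$ --- being those of $a_r$ --- lie in $\nu_r$, the connected arc $\sigma(a_r)$ must lie in $\nu_r$; and since $\nu_r$ is a disc, $\sigma(a_r)$ is isotopic to $a_r$ rel $\partial\Sigma$. This holds for every $r$, so by the choice of the $a_r$, $\sigma=\id$ in $\pi_0\,\Symp^c(\Sigma)$.

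I expect the delicate points to be bookkeeping rather than genuine obstacles. Foremost is the passage between the arcs $a_r,a_r^{\pm}$ in $\Sigma$ and the circles $\bar a_r,\bar a_r^{\pm}$ in $\tilde\Sigma$: one must set up $\tilde\Sigma$ so that these circles interact only inside $\Sigma$ and stay pairwise non-isotopic --- they cross distinct handle cocores --- so that $\dim HF$ genuinely equals $\Imin$ there and hence $\Imin(\sigma(\bar a_r),\bar a_r^{\pm})=\Imin(\sigma(a_r),a_r^{\pm})$. The other thing to watch is the step upgrading ``disjoint from $a_r^{+}$ and disjoint from $a_r^{-}$'' to ``simultaneously disjoint from both''; this is standard, but is the one place where the choice of push-offs is used. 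Everything else is the preceding lemma together with the observation that two parallel push-offs detect an arc up to isotopy.
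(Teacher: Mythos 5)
Your proposal is correct and follows the same skeleton as the paper's proof: reduce to a finite filling collection of arcs $a_1,\dots,a_p$ via the Alexander method (as in the preceding lemma), detect each $a_r$ by auxiliary parallel arcs, and convert all arcs to exact circles by attaching one-handles so that the detection is phrased purely in terms of $\Imin$ of circles in $\tilde\Sigma$. The one genuine deviation is in the detection step. The paper, following the proof of Proposition \ref{prp:fix_arc}, uses \emph{three} auxiliary arcs per $a_i$: the two push-offs $b_{i,1}, b_{i,2}$ (with $\Imin = 0$) together with a third arc $b_{i,3}$ meeting $a_i$ once in a collar of the boundary (with $\Imin = 1$), the latter being used there to exclude the alternative that the image arc lies in $\Sigma\setminus\nu$. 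You dispense with the third arc by observing that $\sigma$, being a product of Dehn twists along interior circles, fixes a neighbourhood of $\partial\Sigma$, so the endpoints of $\sigma(a_r)$ are those of $a_r$ and lie in $\nu_r$; once $\sigma(a_r)$ is isotoped rel boundary off the multiarc $a_r^{+}\cup a_r^{-}$, which is the frontier of $\nu_r$ in the interior of $\Sigma$, connectedness traps it in the disc $\nu_r$, forcing it to be isotopic to $a_r$ rel endpoints. This is a sound simplification: it yields a smaller $\tilde\Sigma$ and a set $I$ consisting only of pairs with $\Imin = 0$, and it incidentally shows that the case \qq{$\phi(c)\subset\Sigma\setminus\nu$} in Proposition \ref{prp:fix_arc} is vacuous for compactly supported $\phi$. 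The two points you flag as delicate are precisely the ones the paper also leaves implicit: the transfer $\Imin(\sigma(\bar a_r),\bar a_r^{\pm}) = \Imin(\sigma(a_r),a_r^{\pm})$ between circles in $\tilde\Sigma$ and arcs rel boundary in $\Sigma$ (where non-isotopy of the circles, guaranteed by the distinct handle cocores, is genuinely needed, since $\dim HF$ and $\Imin$ disagree for isotopic exact circles), and the upgrade from separate to simultaneous disjointness from the two push-offs via an innermost-bigon argument; your treatment of both matches the paper's level of detail.
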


\begin{proof}
This essentially follows from the proof of Proposition \ref{prp:fix_arc}: first pick a collection of arcs $a_1, \ldots, a_l$ such that if they are all fixed by $\sigma$ (relative to their boundaries), then $\sigma$ is the identity in $\pi_0 \Symp^c \, (\Sigma)$. Then pick a further collection of arcs $b_{i,j}$, $i=1, \ldots, l$, $j=1,2,3$ as in the proof of Proposition \ref{prp:fix_arc}: $b_{i,1}$ and $b_{i,2}$ parallel to $a_i$, and $b_{i,3}$ intersecting $a_i$ transversally in a point and living in a collar neighbourhood of one of the boundary components. Now take the surface $\tilde{\Sigma}$ given by attaching a one-handle for each of the arcs $a_i$ or $b_{i,j}$, label the associated exact Lagrangian $S^1$s in $\tilde{\Sigma}$ by $w_1, \ldots, w_m$ for some choice of index ordering, and take the set $I$ to correspond to the collection $\{ (i, (i,j) ) \}$ for all $i=1, \ldots, l$, $j=1,2,3$. 
\end{proof}

\begin{rmk}
We have proceeded quite greedily in our proof: the $\tilde{\Sigma}$ that one obtains this way isn't close to being minimal in general. 
In particular, note that one expects analogues of Lemmas \ref{lma:arc_detection} and \ref{lem:Imin=HF}, and Proposition \ref{prp:fix_arc} for exact Lagrangian $S^1$s, which would further cut down on  the number of handle attachments one needs to make. 

%At least in good cases, we would expect that is enough to fix some finite collection of exact $S^1$s on $\Sigma$, together with one 
\end{rmk}

Given an exact Lagrangian sphere $V$ in $M$, the Dehn twist $\tau_{V}$ induces an automorphism of $\Fuk(M)$, defined up to quasi-isomorphism, and so an element of $\AutFuk(M)$. 

We get the following variation on Theorem \ref{thm:main}.

\begin{thm}\label{thm:main_noncompact}

Let $(\Sigma; \{ v_i \})$ be a real two-dimensional Liouville domain, together with a collection of exact $S^1$s $v_i$, $i=1, \ldots, k$. Then there exists another real two-dimensional Liouville domain $\tilde{\Sigma}$, together with an exact embedding $\Sigma \subset \tilde{\Sigma}$, and exact $S^1$s $v_i$, $i=k+1, \ldots, l+k$ on $\tilde{\Sigma}$ such that the following holds: 
let $(\tilde{M}^{2n}; \{ V_i \})$ be any Liouville domain and collection of Lagrangian spheres  such that $(\tilde{\Sigma}, \tilde{M}^{2n}; \{ (v_i, V_i) \}_{i=1, \ldots, l+k} )$ is an $(n-1)$--stabilisation. 
 Assume that there is a categorical relation between the Dehn twists in the $V_i$, for indices $i \in \{ 1, \ldots, k \}$, in the following sense: $\prod_j \tau^{m_j}_{V_{i_j}} = \text{id} \in \AutFuk(\tilde{M})$, some $m_j \in \mathbb{Z}$. 
 Then the same relation must also hold between the $\tau_{v_i}$ in $\pi_0 \, \Symp^c(\Sigma)$. 

%(Note that we are dropping the compact support assumption for symplectomorphisms of $M'$.) \red{Maybe still want some reasonable behaviour at infinity? E.g. contacto on boundary? Perhaps phrase in terms of autoeqs of compactly supported Fukaya category, and then note csq for non-compactly supported symplectos?}
\end{thm}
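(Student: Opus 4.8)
The plan is to re-run the proof of Theorem \ref{thm:main}, with two adjustments forced by the weaker, purely categorical hypothesis. First, $\AutFuk(\tilde M)$ only sees \emph{compact} Lagrangians, so isotopy classes of arcs on $\Sigma$ cannot be detected directly; instead one works with the closed exact curves $w_i$ supplied by Lemma \ref{lma:non_cpt}, whose Lefschetz stabilisations are \emph{compact} Lagrangian spheres $W_i$, hence objects of $\Fuk(\tilde M)$. Second, the hypothesis ``$\Phi$ is symplectically isotopic to the identity'' of Theorem \ref{thm:main} is replaced by ``$\Phi$ acts as the identity on $\Fuk(\tilde M)$'', which I will convert into the same Floer-dimension equalities that powered the original argument.

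First I would fix the surface: apply Lemma \ref{lma:non_cpt} to $(\Sigma; \{v_i\}_{i=1,\dots,k})$. This gives $\tilde\Sigma \supset \Sigma$, exact circles $w_1,\dots,w_m$ on $\tilde\Sigma$ (relabelled $v_{k+1},\dots,v_{k+m}$, so $l := m$), and a set $I$ of index pairs with $\Imin(w_i,w_j)\in\{0,1\}$. By construction each $w_a$ is a Lagrangian arc $a_a \subset \Sigma$ capped off by the core of a one-handle attached along $\partial\Sigma$, and for $(i,j)\in I$ the arcs $a_i,a_j$ have disjoint boundaries; so by the handle-attachment isomorphism recorded after Lemma \ref{lma:arc_detection} together with Lemma \ref{lem:Imin=HF}, $\dim HF_{\tilde\Sigma}(w_i,w_j) = \Imin(w_i,w_j)$ for every $(i,j)\in I$. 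These $\tilde\Sigma$ and $v_{k+1},\dots,v_{k+l}$ — which depend only on $(\Sigma;\{v_i\})$ — are the data asserted by the theorem.

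Now let $(\tilde\Sigma, \tilde M^{2n}; \{(v_i,V_i)\}_{i=1,\dots,k+l})$ be any $(n-1)$--stabilisation and set $W_a := V_{k+a}$, a compact exact Lagrangian sphere in $\tilde M$, hence an object of $\Fuk(\tilde M)$. Suppose $\Phi := \prod_j \tau_{V_{i_j}}^{m_j}$, with all $i_j\in\{1,\dots,k\}$, is the identity in $\AutFuk(\tilde M)$, and let $\phi := \prod_j \tau_{v_{i_j}}^{m_j}$ be the same word on $\tilde\Sigma$ (it is supported in the interior of $\Sigma$). Fix $(i,j)\in I$. Since $\Phi$ is trivial in $\AutFuk(\tilde M)$, the object $\Phi(W_i)$ is quasi-isomorphic to $W_i$ in $\Tw\Fuk(\tilde M)$, so $\dim HF_{\tilde M}(\Phi(W_i),W_j) = \dim HF_{\tilde M}(W_i,W_j)$. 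Iterating up the tower the dimension equality $\dim HF(V_p,V_q) = \dim HF(v_p,v_q)$ that holds at each Lefschetz one-stabilisation gives $\dim HF_{\tilde M}(W_i,W_j) = \dim HF_{\tilde\Sigma}(w_i,w_j) = \Imin(w_i,w_j)$, which is $0$ or $1$. Hence $\dim HF_{\tilde M}(\Phi(W_i),W_j)\leq 1$, so Theorem \ref{thm:SeidelSmith} and Corollary \ref{prp:dim_equalities}, applied to $\Phi$, $\phi$ and the spheres $W_i = V_{k+i}$, $W_j = V_{k+j}$, yield $\dim HF_{\tilde\Sigma}(\phi(w_i),w_j) = \dim HF_{\tilde M}(\Phi(W_i),W_j) = \Imin(w_i,w_j)$. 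Finally, as $\phi$ is supported away from the handles, $\phi(w_i)$ is again the capped arc $\phi(a_i) \cup (\text{core})$, with $\phi(a_i)$ and $a_j$ having disjoint boundaries; so the same handle-attachment isomorphism and Lemma \ref{lem:Imin=HF} give $\Imin(\phi(w_i),w_j) = \dim HF_{\tilde\Sigma}(\phi(w_i),w_j) = \Imin(w_i,w_j)$ for all $(i,j)\in I$. By Lemma \ref{lma:non_cpt}, $\phi = \id$ in $\pi_0\Symp^c(\Sigma)$, which is precisely the relation $\prod_j \tau_{v_{i_j}}^{m_j} = \id$ claimed.

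I expect the delicate step to be the first equality of that chain: converting ``$\Phi = \id$ in $\AutFuk(\tilde M)$'' into $\dim HF_{\tilde M}(\Phi(W_i),W_j) = \dim HF_{\tilde M}(W_i,W_j)$. This is exactly where it matters that the auxiliary curves were chosen \emph{closed}, so that the $W_i$ are compact spheres genuinely acted on by elements of $\AutFuk(\tilde M)$; it also requires keeping the two nested structures $\Sigma\subset\tilde\Sigma$ and the Lefschetz tower of $\tilde M$ consistently bookkept, so that the stably-trivial-normal-structure hypotheses underlying Theorem \ref{thm:SeidelSmith} hold verbatim for the relevant pairs of matching cycles. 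Everything else is a repackaging of the proof of Theorem \ref{thm:main}.
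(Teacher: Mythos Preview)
Your proposal is correct and follows essentially the same route as the paper's own proof: choose $\tilde\Sigma$ and the auxiliary closed curves via Lemma~\ref{lma:non_cpt}, use the categorical hypothesis to get $\dim HF(\Phi V_\alpha,V_\beta)=\dim HF(V_\alpha,V_\beta)$, then feed this into the Seidel--Smith inequality (with equality when the dimension is $0$ or $1$) and conclude by Lemma~\ref{lma:non_cpt}. The paper compresses this into a single displayed chain of (in)equalities, whereas you spell out the justifications --- in particular the passage between $\Imin$ and $\dim HF$ for the closed curves $w_i$ via the handle-attachment isomorphism and Lemma~\ref{lem:Imin=HF}, which the paper leaves implicit.
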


\begin{rmk}
We are \emph{not} assuming that the relation between the $\tau_{V_i}$ holds in the symplectic mapping class group $\pi_0 \Symp^c \, (\tilde{M})$. Roughly speaking, we have traded our compact support assumptions on $M$ for something weaker on the larger space $\tilde{M}$. 
\end{rmk}

\begin{proof}
Pick $\tilde{\Sigma}$ as in Lemma \ref{lma:non_cpt}, and $v_{i+k} = w_i$. Proceeding as before, for all $\alpha, \beta \in \{ k+1, \ldots, l \}$, 
\begin{multline*}
I_\text{min} (v_\alpha, v_\beta) = 
HF(V_\alpha , V_\beta) = HF \Big(\prod_j \tau^{m_j}_{V_{i_j}} V_\alpha, V_\beta \Big)
\\
 \geq 
HF \Big(\prod_j \tau^{m_j}_{v_{i_j}} v_\alpha, v_\beta \Big) 
= I_\text{min} \Big(\prod_j \tau^{m_j}_{v_{i_j}} v_\alpha, v_\beta \Big)
\end{multline*}
with equality whenever the left-hand side is equal to zero or one. Thus the hypotheses of Lemma \ref{lma:non_cpt} are satisfied, and we get that $\prod_j \tau^{m_j}_{v_{i_j}}=  \id  \in \pi_0 \Symp^c (\Sigma)$.
\end{proof}

\section{Some corollaries}\label{sec:corollaries}

\subsection{Free groups and right-angled Artin groups}

Given a Lefschetz stabilisation \\ $(\Sigma, M; \{ (V_i, v_i ) \})$,  if there are no relations between Dehn twists in (some of) the $v_i$, then there certainly can't be any between Dehn twists in the corresponding $V_i$. This allows us to `lift' certain free subgroups of classical mapping class groups to higher dimensions.

\begin{prp}
Suppose that $(\Sigma, M; \{ (v_i, V_i )\} )$ is a Lefschetz stabilisation, with $\Sigma$ of real dimension two. 
%Let $N = \text{max}_{i\neq j} I_\text{min} (v_i, v_j)$, and $n = \text{min}_{i\neq j} I_\text{min} (v_i, v_j)$. 
Suppose that for some subset $I$ of the indices of the $v_i$, and all $i, j, k \in I$ with $i \neq j \neq k$, 
$$
6 I_\text{min} (v_i, v_k) \leq I_\text{min} (v_i, v_j) I_\text{min} (v_j, v_k).
$$
Then the $\tau_{V_i}$, $i \in I$, generate a free subgroup $\mathbb{F}_{|I|}$ of the symplectic mapping class group of $M$. 
%Moreover, if there is a Liouville domain $M'$ with a (codimension zero) inclusion of Liouville domains $M \subset M'$, the $\tau_{V_i}$ also generate a free subgroup of the symplectic mapping class group of $M'$.
\end{prp}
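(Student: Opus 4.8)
The plan is to obtain this as a formal consequence of Theorem~\ref{thm:main} applied to the $(n-1)$--stabilisation $(\Sigma, M;\{(v_i,V_i)\})$, once one knows the corresponding two-dimensional statement. So the first step is to record that the numerical inequality
\[
6\,\Imin(v_i,v_k)\le \Imin(v_i,v_j)\,\Imin(v_j,v_k)
\]
is exactly the hypothesis under which a ping-pong argument shows that the Dehn twists $\tau_{v_i}$, $i\in I$, freely generate a free subgroup $\mathbb{F}_{|I|}$ of $\Mod(\Sigma,\partial)=\pi_0\Symp^c(\Sigma)$. Concretely, one runs ping-pong on the set of isotopy classes of essential simple closed curves on $\Sigma$ (equivalently, after completing, on projectivised measured laminations): for each $i\in I$ one takes as attracting set $X_i$ the curves $\gamma$ whose intersection number with $v_i$ dominates, in the appropriate weighted sense, its intersection numbers with the other $v_j$, $j\in I\setminus\{i\}$; the constant $6$ is precisely what is needed to force $\tau_{v_i}^{\,m}(X_j)\subset X_i$ for all $m\neq 0$ and all $j\neq i$. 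I would cite this rather than reprove it, referring to \cite{Keating} and the classical surface results it rests on; the point for us is only the output, namely that the $\tau_{v_i}$, $i\in I$, are a free basis of a free subgroup of $\pi_0\Symp^c(\Sigma)$.

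Granting the two-dimensional statement, the higher-dimensional conclusion is immediate. The assignment sending the $i$-th standard generator of $\mathbb{F}_{|I|}$ to $\tau_{V_i}\in\pi_0\Symp^c(M)$ extends (automatically, as the source is free) to a homomorphism whose image is the subgroup $\langle\tau_{V_i}:i\in I\rangle$, so it suffices to check the kernel is trivial. An element of the kernel is represented by a word $\prod_j\tau^{m_j}_{V_{i_j}}=\id$ in $\pi_0\Symp^c(M)$ with all $i_j\in I$; by Theorem~\ref{thm:main} the same word gives $\prod_j\tau^{m_j}_{v_{i_j}}=\id$ in $\pi_0\Symp^c(\Sigma)$, and since the $\tau_{v_i}$, $i\in I$, are free generators this forces the word to be trivial in $\mathbb{F}_{|I|}$. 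Hence the homomorphism is injective and the $\tau_{V_i}$, $i\in I$, generate a copy of $\mathbb{F}_{|I|}$ in the symplectic mapping class group of $M$. (The same argument incidentally covers the degenerate case $|I|=1$: a relation $\tau_{V}^{m}=\id$ would descend to $\tau_v^{m}=\id$ in $\Mod(\Sigma,\partial)$, which fails for $m\neq 0$.)

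The only real obstacle — and it lies entirely in the two-dimensional input — is to make sure the ping-pong hypothesis is stated with the same normalisation as the reference invoked (the constant $6$, and the handling of pairs $v_i,v_j$ that happen to realise their minimal intersection number), and that the ping-pong lemma is applied so as to exhibit the $\tau_{v_i}$ \emph{themselves} as a free basis, not merely to produce some free subgroup of $\Mod(\Sigma,\partial)$. Everything downstream of that is a direct application of Theorem~\ref{thm:main}, in the spirit of the remark preceding the proposition: absence of relations among the $\tau_{v_i}$ lifts to absence of relations among the $\tau_{V_i}$.
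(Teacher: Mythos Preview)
Your approach is essentially identical to the paper's: invoke the two-dimensional free-group result for the $\tau_{v_i}$ under the stated numerical hypothesis, then apply Theorem~\ref{thm:main} to transport the absence of relations to the $\tau_{V_i}$. The one correction is the citation for the surface input: the paper attributes this to Hamidi-Tehrani \cite[Theorem 7.2]{Hamidi-Tehrani}, not to \cite{Keating}, which concerns pairs of Dehn twists in higher dimensions rather than the multi-twist ping-pong on surfaces with the constant $6$.
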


\begin{proof}
This follows from a result of Hamidi--Tehrani \cite[Theorem 7.2]{Hamidi-Tehrani}, combined with our Theorem \ref{thm:main}. 
\end{proof}

Notice that by e.g.~using generalised plumbing constructions (for instance starting with a generalised graph with finitely many vertices and six edges between each pair of vertices, and performing a plumbing of $T^\ast S^n$s according to that graph), we can construct many configurations satisfying the hypotheses of the above proposition. 
Moreover, for any such surface $\Sigma$, we can find a further collection of exact Lagrangian $S^1$ whose (conjugacy) classes generate $\pi_1 (\Sigma)$. 
In particular, we get the following corollary.

\begin{cor}
For any $k \in \mathbb{N}$, and any $n \geq 2$, there are infinitely many simply connected $2n$-dimensional Weinstein domain $M$ whose symplectic mapping class group contains a free subgroup on $k$ elements, generated by Dehn twists. 
\end{cor}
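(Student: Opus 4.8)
The plan is to combine the two preceding remarks: the previous Corollary (with its proposition) already produces, in each dimension $2n \geq 4$, infinitely many $2n$-dimensional Weinstein domains $M$ whose symplectic mapping class group contains a free subgroup $\FF_k$ generated by Dehn twists in Lagrangian spheres; what remains is to arrange for $M$ to be simply connected. The preceding Proposition requires a Lefschetz stabilisation $(\Sigma, M; \{(v_i, V_i)\})$ where the $v_i$, for $i$ in some index set $I$, satisfy the inequality $6 I_\text{min}(v_i, v_k) \leq I_\text{min}(v_i, v_j) I_\text{min}(v_j, v_k)$. So first I would fix $k$ and $n$, and produce a concrete surface $\Sigma$ together with exact $S^1$s $v_i$, $i \in I$, $|I| = k$, satisfying this inequality — concretely, via the generalised plumbing of $T^\ast S^1$s along a graph with $k$ vertices and (say) six edges between every pair of vertices, as suggested in the text, so that $I_\text{min}(v_i, v_j) = 6$ for $i \neq j$ and the inequality $6 \cdot 6 \leq 6 \cdot 6$ holds.

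Next, since this $\Sigma$ is not simply connected (indeed $\pi_1(\Sigma)$ is free of positive rank), I would enlarge the configuration: choose finitely many additional exact embedded $S^1$s $w_1, \ldots, w_r$ on $\Sigma$ whose free homotopy classes generate $\pi_1(\Sigma)$ — these exist because any loop on a surface is freely homotopic to an immersed, hence (after surgery or by a small perturbation) embedded, curve, and one can pick disjointly embedded representatives of a generating set, or simply take one-handle–type core curves. Actually the cleanest route: take $\Sigma$ built as above so that $\pi_1(\Sigma)$ is free on explicit generators each represented by an embedded $S^1$; add those curves $w_j$ to the collection. Then form a Lefschetz $(n-1)$-stabilisation $(\Sigma, M; \{(v_i, V_i)\}_{i \in I} \cup \{(w_j, W_j)\}_j)$ — such stabilisations always exist, e.g. by iterating the Lefschetz one-stabilisation construction with the full collection of curves. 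By construction $M$ is the total space of an iterated Lefschetz fibration with fibre (stabilisations of) $\Sigma$ and spherical vanishing cycles; attaching a Weinstein handle along a Lagrangian sphere that is a lift of $w_j$ kills the corresponding $\pi_1$ class. More directly: the $\pi_1$ of a Lefschetz fibration total space is $\pi_1(\text{fibre})$ modulo the classes of the vanishing cycles. Since the vanishing cycles of the stabilising fibrations include (lifts of) the $w_j$, and these generate $\pi_1(\Sigma) = \pi_1(F)$, the total space $M$ is simply connected. (Alternatively, invoke the construction in the proof of Proposition \ref{prp:dim_equalities_id}, where a simply connected $M''$ with $M \subset M''$ open and exact is produced; one can take $M = M''$ from the start by including enough curves.)

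Finally, I would invoke the preceding Proposition: since the $v_i$, $i \in I$, satisfy the Hamidi-Tehrani inequality, the Dehn twists $\tau_{V_i}$, $i \in I$, generate a free subgroup $\FF_k$ of $\pi_0 \Symp^c(M)$, and by construction $M$ is a simply connected $2n$-dimensional Weinstein domain. Infinitude follows because one can vary the plumbing graph (e.g. increase the number of vertices beyond $k$ while still isolating a $k$-element subset $I$ with the inequality, or vary the edge multiplicities, or stabilise by yet more auxiliary curves), producing infinitely many pairwise non-symplectomorphic — indeed pairwise non-homotopy-equivalent, by varying the ranks of the homology groups — such $M$.

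The main obstacle I anticipate is the simple-connectivity step: one must be careful that adding the auxiliary curves $w_j$ genuinely kills $\pi_1$ in the higher-dimensional total space and does not inadvertently destroy either the freeness of the subgroup generated by the $\tau_{V_i}$ (it cannot, since Theorem \ref{thm:main} only needs a relation to hold, and there is no relation among the $\tau_{v_i}$, $i \in I$, even in the larger surface — indeed adding curves only enlarges the mapping class group and cannot create relations) or the Lefschetz-stabilisation structure. Tracking $\pi_1$ through the iterated Lefschetz fibrations, and checking that the classes of the vanishing cycles one introduces really do normally generate $\pi_1$ of the fibre, is the one place where genuine care is required; the rest is assembling results already in hand.
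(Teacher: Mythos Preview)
Your proposal is correct and follows essentially the same approach as the paper: build $\Sigma$ via a generalised plumbing with six edges between each pair of vertices so that the Hamidi-Tehrani inequality holds, then enlarge the collection by exact $S^1$s whose classes generate $\pi_1(\Sigma)$ so that any Lefschetz stabilisation on the full collection is simply connected, and finally vary the construction to obtain infinitely many examples. The paper states this corollary as an immediate consequence of exactly these two observations without further detail, so your write-up is in fact more thorough than the paper's own treatment.
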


\begin{rmk} If we allow the subgroups to be generated by powers of Dehn twists instead of Dehn twists, results along these lines were already known by \cite{Khovanov-Seidel}: given an $A_k$-chain of Lagrangian spheres, the corresponding Dehn twists generate as a subgroup of the symplectic mapping class group the braid group on $k+1$ strands. This contains a free subgroup on $k$ elements, generated by the elementary pure braids between the first and $i^{th}$ strands, $i=2, \ldots, k+1$. More generally, notice also that the free group on two generators (and so the pure braid group on three strands) contains as a subgroup a free group on countably many generators. 
\end{rmk} 

Using Theorem \ref{thm:main_noncompact}, we can also get the slightly stronger statement:

\begin{cor}
For any $k \in \mathbb{N}$, and any $n \geq 2$, there are infinitely many simply connected $2n$-dimensional Weinstein domains $M$ such that $\AutFuk(M)$, contains a free subgroup on $k$ elements, generated by Dehn twists.
\end{cor}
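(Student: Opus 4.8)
The plan is to combine the construction behind the previous two corollaries with Theorem~\ref{thm:main_noncompact} in place of Theorem~\ref{thm:main}. First I would fix $k$ and $n \geq 2$ and take the same kind of two-dimensional Liouville domain $\Sigma$ as in the preceding discussion: namely, start with a generalised plumbing graph with $k$ vertices and six edges between every pair of distinct vertices, and let $\Sigma$ be a surface carrying exact $S^1$s $v_1, \dots, v_k$ realising this intersection pattern, so that $6\, I_\text{min}(v_i, v_k) \leq I_\text{min}(v_i, v_j)\, I_\text{min}(v_j, v_k)$ for all distinct $i,j,k$. By Hamidi-Tehrani's criterion \cite[Theorem 7.2]{Hamidi-Tehrani} the Dehn twists $\tau_{v_1}, \dots, \tau_{v_k}$ generate a free group $\mathbb{F}_k$ in $\Mod(\Sigma, \partial) = \pi_0\,\Symp^c(\Sigma)$; in particular no nontrivial word $\prod_j \tau_{v_{i_j}}^{m_j}$ is the identity there.

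Next I would apply Theorem~\ref{thm:main_noncompact} to $(\Sigma; \{v_i\}_{i=1,\dots,k})$: this furnishes an enlargement $\tilde{\Sigma} \supset \Sigma$ with additional exact $S^1$s $v_{k+1}, \dots, v_{k+l}$, and the conclusion is that for any Lefschetz $(n-1)$-stabilisation $(\tilde{\Sigma}, \tilde{M}^{2n}; \{(v_i, V_i)\}_{i=1,\dots,k+l})$, any categorical relation $\prod_j \tau_{V_{i_j}}^{m_j} = \id \in \AutFuk(\tilde{M})$ with indices $i_j \in \{1, \dots, k\}$ forces $\prod_j \tau_{v_{i_j}}^{m_j} = \id \in \pi_0\,\Symp^c(\Sigma)$. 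Contrapositively, since no nontrivial such word vanishes downstairs, no nontrivial word in $\tau_{V_1}, \dots, \tau_{V_k}$ can be trivial in $\AutFuk(\tilde{M})$; that is, $\tau_{V_1}, \dots, \tau_{V_k}$ generate a copy of $\mathbb{F}_k$ inside $\AutFuk(\tilde{M})$. Finally I would ensure simple connectivity and that there are infinitely many such $M$: one can always first enlarge the collection of $S^1$s on $\tilde{\Sigma}$ by further curves whose classes kill $\pi_1$, as remarked in the text after the first corollary (``we can find a further collection of exact Lagrangian $S^1$ whose classes generate $\pi_1(\Sigma)$''), so that the stabilisation $M$ becomes simply connected; and the infinitude follows as before by iterating the stabilisation to arbitrarily high dimension, or by varying auxiliary choices, exactly as in the statement of the previous corollary.

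The main obstacle, as in the compact case, is bookkeeping rather than a new idea: one must check that the extra $S^1$s $v_{k+1}, \dots, v_{k+l}$ produced by Lemma~\ref{lma:non_cpt} (and any further curves added to kill $\pi_1$) do not interfere with the free-group criterion for the first $k$ curves — but this is automatic, since the hypothesis $6\, I_\text{min}(v_i,v_k) \le I_\text{min}(v_i,v_j) I_\text{min}(v_j,v_k)$ is only required for $i,j,k$ among the original indices, and the conclusion of Theorem~\ref{thm:main_noncompact} only concerns words in $\tau_{V_1}, \dots, \tau_{V_k}$. One should also note that "$\id \in \AutFuk(\tilde M)$" is genuinely weaker than "$\id \in \pi_0\,\Symp^c(\tilde M)$", which is precisely why Theorem~\ref{thm:main_noncompact} rather than Theorem~\ref{thm:main} is needed to get a statement about $\AutFuk$; the price, that the ambient domain is $\tilde{M}$ rather than $M$, is harmless here because we are free to choose $M := \tilde M$ from the outset.

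\begin{proof}
Fix $k$ and $n \geq 2$. As in the discussion following the previous two corollaries, choose a real two-dimensional Liouville domain $\Sigma$ carrying exact $S^1$s $v_1, \dots, v_k$ with
\[
6\, I_\text{min}(v_i, v_\ell) \leq I_\text{min}(v_i, v_j)\, I_\text{min}(v_j, v_\ell)
\]
for all distinct $i, j, \ell \in \{1, \dots, k\}$; such configurations arise, for instance, from a generalised plumbing graph on $k$ vertices with six edges between each pair of vertices. By \cite[Theorem 7.2]{Hamidi-Tehrani}, the Dehn twists $\tau_{v_1}, \dots, \tau_{v_k}$ generate a free subgroup $\mathbb{F}_k \leq \pi_0\,\Symp^c(\Sigma)$; in particular, no nontrivial word $\prod_j \tau_{v_{i_j}}^{m_j}$ with $i_j \in \{1, \dots, k\}$ equals the identity in $\pi_0\,\Symp^c(\Sigma)$.

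Apply Theorem~\ref{thm:main_noncompact} to $(\Sigma; \{v_i\}_{i=1, \dots, k})$. This yields a real two-dimensional Liouville domain $\tilde{\Sigma}$ with an exact embedding $\Sigma \subset \tilde{\Sigma}$ and additional exact $S^1$s $v_{k+1}, \dots, v_{k+l}$ on $\tilde{\Sigma}$ with the stated property. Enlarging the collection of $S^1$s on $\tilde{\Sigma}$ further if necessary by curves whose classes generate $\pi_1(\tilde{\Sigma})$, choose a Lefschetz $(n-1)$-stabilisation $(\tilde{\Sigma}, M; \{(v_i, V_i)\})$ with $M$ simply connected and $2n$-dimensional; this is possible exactly as in the proof of the previous corollary. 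Now suppose $\prod_j \tau_{V_{i_j}}^{m_j} = \id \in \AutFuk(M)$ with all $i_j \in \{1, \dots, k\}$. By Theorem~\ref{thm:main_noncompact}, the same word $\prod_j \tau_{v_{i_j}}^{m_j}$ equals the identity in $\pi_0\,\Symp^c(\Sigma)$, hence the word is trivial in $\mathbb{F}_k$. Therefore the Dehn twists $\tau_{V_1}, \dots, \tau_{V_k}$ satisfy no nontrivial relation in $\AutFuk(M)$, i.e.\ they generate a free subgroup $\mathbb{F}_k \leq \AutFuk(M)$.

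Finally, infinitely many such $M$ are obtained by iterating the Lefschetz stabilisation to arbitrarily high even dimension (or by varying the auxiliary choices in the construction above), exactly as in the preceding corollary.
\end{proof}
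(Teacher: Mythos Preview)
Your proof is correct and takes essentially the same approach as the paper, which states this corollary without an explicit argument, merely noting that it follows from the construction behind the preceding corollaries upon replacing Theorem~\ref{thm:main} with Theorem~\ref{thm:main_noncompact}. Your bookkeeping around the extra curves produced by Lemma~\ref{lma:non_cpt} and those added to kill $\pi_1$ fills in exactly the details the paper leaves implicit.
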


\begin{rmk}\label{rmk:Mess} (This is a suggestion of Jonny Evans.)
In a somewhat more restricted setting, we can also use Theorems \ref{thm:main} and \ref{thm:main_noncompact} to lift results of Mess \cite{Mess}: fix a free subgroup of the Torelli group of a genus two surface generated by Dehn twists in finitely many separating curves. Puncture (in multiple points) the surface away from representatives for the curves, so that the resulting punctured surface $\Sigma$ can be equipped  with an exact symplectic form such that each of the curves, say $v_1, \ldots, v_k$, is exact. Then the group generated by $\tau_{v_i}, \ldots, \tau_{v_k}$ is a free subgroup of the mapping class group of $\Sigma$, and we obtain free subgroups of mapping class groups of stabilisations of $(\Sigma; \{ v_i \})$. 
\end{rmk}

How about other groups? Let's first recall some definitions from geometric topology, largely following \cite{Koberda}.

\begin{dfn}
Given a graph $\Gamma$, with vertex set $V = \{ \zeta_i \}$, the right-angled Artin group associated to $\Gamma$ is
$$
A(\Gamma) = \langle \zeta_i \, | \, [\zeta_i, \zeta_j ] \text{ whenever there is an edge in }\Gamma 
\text{ between } \zeta_i \text{ and } \zeta_j \rangle.
$$
The set of classes $\{  \zeta_i \}$ is called a right-angled Artin system for $A(\Gamma)$. 
\end{dfn}

\begin{dfn}
Let $v_1, \ldots, v_k$  be a collection of embedded simple closed curves on a surface $\Sigma$. The coincidence graph of the $v_i$ is a graph with a vertex $a_i$ for each $v_i$, and an edge between $a_i$ and $a_j$ precisely when $I_\text{min} (v_i, v_j) =0$. 
\end{dfn}

We shall use the following result of Koberda:

\begin{thm} \cite[Theorem 1]{Koberda}
Let $v_1, \ldots, v_k$  be a collection of embedded simple closed curves on a surface $\Sigma$. Let $\tau_i = \tau_{v_i}$. Assume that the collection is irredundant: none of the $v_i$ is smoothly isotopic to another one.
%no two Dehn twists $\{ \tau_i, \tau_j \}$ generate a cyclic subgroup of $MGC(\Sigma)$. 
Then there exists an $N \in \mathbb{N}$ such that for all $n \geq N$, the set of mapping classes $\{ \tau_1^n, \ldots, \tau_k^n \}$ is a right-angled Artin system for a right-angled Artin subgroup of $Mod(\Sigma, \partial)$. Moreover, this subgroup is associated to the graph given by the coincidence correspondence of the $v_i$. 
\end{thm}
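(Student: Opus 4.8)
The plan is to sandwich the group $\langle \tau_1^n,\dots,\tau_k^n\rangle$ between two copies of the right-angled Artin group $A(\Gamma)$, where $\Gamma$ is the coincidence graph of the $v_i$: an elementary surjection $\alpha\colon A(\Gamma)\twoheadrightarrow\langle \tau_1^n,\dots,\tau_k^n\rangle$ valid for every $n$, and — for all large $n$ — a surjection $\beta$ in the opposite direction coming from the action on the homology of a suitable finite cover, arranged so that $\beta\circ\alpha=\mathrm{id}_{A(\Gamma)}$, which forces both maps to be isomorphisms.

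First I would dispose of irredundancy and of the easy inclusion. Irredundancy is genuinely needed: if $v_i$ were isotopic to $v_j$ then $\tau_i=\tau_j$, whereas $\Imin(v_i,v_j)=0$ would put an edge between $a_i$ and $a_j$, so $\langle \tau_i^n,\tau_j^n\rangle\cong\mathbb{Z}$ rather than the $\mathbb{Z}^2$ predicted by $\Gamma$. Assuming the $v_i$ pairwise non-isotopic, whenever $\Imin(v_i,v_j)=0$ I can realise $v_i$ and $v_j$ disjointly, so $\tau_{v_i}$ and $\tau_{v_j}$ have disjoint support and commute; hence for every $n$ the rule $\zeta_i\mapsto\tau_i^n$ defines the surjection $\alpha$. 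The entire difficulty is to show $\alpha$ is injective once $n$ is large enough.

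For that, I would pass to a finite cover $p\colon\widetilde\Sigma\to\Sigma$, adapted to the $v_i$ and chosen — using that surface groups are LERF (Scott) and residually finite — so that: (i) each $\tau_{v_i}$ lifts and, after replacing $n$ by a fixed multiple if necessary, $\tau_{v_i}^n$ lifts to an honest multitwist $\prod_s\tau_{\widetilde v_{i,s}}^{\,n}$ about the components $\widetilde v_{i,s}$ of $p^{-1}(v_i)$; (ii) for each $i$ the classes $x_{i,s}=[\widetilde v_{i,s}]\in H_1(\widetilde\Sigma;\mathbb{Q})$ are nonzero and linearly independent; and (iii) whenever $\Imin(v_i,v_j)\neq0$ some component of $p^{-1}(v_i)$ and some component of $p^{-1}(v_j)$ have nonzero algebraic intersection upstairs (their geometric intersection number is a positive multiple of $i(v_i,v_j)$, and a further cover can separate the strands so the signs do not cancel). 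On the corresponding finite-index liftable subgroup of $\Mod(\Sigma,\partial)$ one then looks at the action on $H_1(\widetilde\Sigma;\mathbb{Z})$: the lift of $\tau_{v_i}^n$ acts as a product of mutually commuting symplectic transvections along the $x_{i,s}$.

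Finally I would show that for $n$ large these homology actions form a right-angled Artin system for a RAAG on $\Gamma$ inside $\mathrm{Sp}\big(H_1(\widetilde\Sigma;\mathbb{Z})\big)$. When $v_i,v_j$ are disjoint all the $x_{i,s}$ are orthogonal to all the $x_{j,t}$ for the intersection form, so the two transvection products commute, matching the edges of $\Gamma$; for the non-adjacent pairs one runs ping-pong — a right-angled Artin refinement of the classical fact that two symplectic transvections along vectors pairing nontrivially generate a free group once raised to a high enough power — laying out the attracting and repelling domains using (ii) and (iii). Past some threshold $N$ this produces a surjection $\beta\colon\langle \tau_1^n,\dots,\tau_k^n\rangle\twoheadrightarrow A(\Gamma)$ with $\beta(\tau_i^n)=\zeta_i$, so $\beta\circ\alpha$ fixes every generator and is the identity of $A(\Gamma)$, whence $\alpha$ is an isomorphism (a short bookkeeping step then upgrades "all large multiples" to "all $n\geq N$"). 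The main obstacle is precisely this last part: engineering one cover that simultaneously arranges (i)--(iii), and then running a ping-pong argument that detects the whole graph $\Gamma$ — yielding a genuine RAAG rather than merely pairwise free subgroups — and in particular verifying that for every crossing pair the transvection products genuinely fail to commute.
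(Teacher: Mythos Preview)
The paper does not prove this theorem: it is quoted from \cite{Koberda} and used as a black box to feed Proposition~\ref{prp:embed_RAAG}, so there is no in-paper argument to compare your sketch against.

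For what it is worth, your outline also departs from Koberda's actual proof, and the departure is exactly where the gap sits. Koberda does not pass to homology of a finite cover; he runs a RAAG-adapted ping-pong on the space $\mathcal{PML}(\Sigma)$ of projective measured laminations, where each $\tau_{v_i}$ has genuine north--south dynamics (attracting and repelling fixed point both equal to the projective class of $v_i$), with uniform contraction on compacta away from the repeller. That hyperbolic behaviour is what lets one certify that every reduced word in $A(\Gamma)$ acts nontrivially. Your transvection products on $H_1(\widetilde\Sigma)$, by contrast, are unipotent: on projectivised homology they are parabolic, with an entire fixed hyperplane and only polynomial drift toward the attracting line. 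One can sometimes extract a free-group ping-pong from two such parabolics (as in $\mathrm{SL}_2(\mathbb Z)$), but promoting this to a RAAG on an arbitrary graph $\Gamma$ --- where commuting and non-commuting generators are interleaved in a single reduced word --- needs disjoint attracting and repelling domains of the sort that parabolic dynamics do not supply. You flag this step as ``the main obstacle,'' and rightly so: it is not a bookkeeping detail but the entire content of Koberda's theorem, and the linear route you propose is not known to carry it.
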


\begin{rmk}
Koberda assumes that $\Sigma = \Sigma_{g,p}$, a genus $g$ surface with up to $p$ punctures; the results holds a fortiori for a surface with boundary.
\end{rmk}

Consider a  Lefschetz stabilisation $( \Sigma, M; \{ (v_i, V_i )\} )$ with $\Sigma$ a two-dimensional Liouville domain.  Suppose that the exact Lagrangians $v_i$ and $v_j$ ($i \neq j$) are not smoothly isotopic, and that $I_\text{min} (v_i, v_j) = 0$. Using similar arguments to the proof of Lemma \ref{lem:Imin=HF}, we can arrange for them not to intersect after a Hamiltonian isotopy. Thus $V_i$ and $V_j$ can also be arranged to be disjoint after Hamiltonian isotopy, and $\tau_{V_i}^n$ and $\tau_{V_j}^n$ commute in $\pi_0 \Symp^c (M)$ for any $n \in \mathbb{N}$. As these are the only relations in Koberda's right-angled Artin groups, we can `lift' Koberda's theorem to higher dimensions.

\begin{prp}\label{prp:embed_RAAG}
Consider a  Lefschetz stabilisation $( \Sigma, M; \{ (v_i, V_i )\} )$ with $\Sigma$ a two-dimensional Liouville domain. Let $I$ be a subset of the indices of the $v_i$, without loss of generality $I = \{ 1, \ldots, l \}$, such that the collection $\{ v_1, \ldots, v_l \}$ is irredundant. 
Then there exists an $N \in \mathbb{N}$ such that for all $n \geq N$, the set of mapping classes $\{ \tau_{V_1}^n, \ldots, \tau_{V_l}^n \}$ is a right-angled Artin system for a right-angled Artin subgroup of $\pi_0 \, \Symp^c (M)$. This Artin group is the one associated to the coincidence graph  of the $v_i$. 

Moreover, we can construct a larger Weinstein domain $\tilde{M}$ and an exact embedding $M \subset \tilde{M}$ such that the same conclusion holds for $\{ \tau^n_{V_1}, \ldots, \tau^n_{V_l} \} $ as a subset of $\AutFuk(\tilde{M})$. 
\end{prp}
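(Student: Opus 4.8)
The plan is to combine Koberda's theorem with the two main theorems of the paper, treating the ``compact support'' statement and the ``$\AutFuk$'' statement in parallel to the way Theorems \ref{thm:main} and \ref{thm:main_noncompact} were derived. First I would set up the relevant Artin relations: since $\{v_1,\dots,v_l\}$ is irredundant, Koberda's theorem supplies an $N_0 \in \mathbb{N}$ such that for all $n \geq N_0$ the classes $\tau_{v_1}^n,\dots,\tau_{v_l}^n$ generate a right-angled Artin subgroup of $\Mod(\Sigma,\partial)$ on the coincidence graph $\Gamma$ of the $v_i$. I would then observe, as in the paragraph preceding the proposition, that whenever $v_i$ and $v_j$ are non-isotopic with $I_\text{min}(v_i,v_j)=0$ we may (by the flux/Hamiltonian-isotopy argument of Lemma \ref{lem:Imin=HF}) assume $v_i \cap v_j = \emptyset$, hence $V_i \cap V_j = \emptyset$ after Hamiltonian isotopy (the $V_i$ only meet over $\pi^{-1}(0)=F$ and are matching cycles, so disjointness downstairs propagates up the stabilisation tower), so $\tau_{V_i}^n$ and $\tau_{V_j}^n$ commute in $\pi_0\Symp^c(M)$ for every $n$. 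This shows the map sending the $i$-th RAAG generator to $\tau_{V_i}^n$ is a well-defined homomorphism $A(\Gamma) \to \pi_0\Symp^c(M)$; it remains to see it is injective for $n$ large.

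For injectivity I would argue by contradiction: suppose a reduced word $w$ in $A(\Gamma)$ maps to $\prod_j \tau_{V_{i_j}}^{n m_j} = \id \in \pi_0\Symp^c(M)$ but $w \neq 1$ in $A(\Gamma)$. Apply Theorem \ref{thm:main} to the stabilisation $(\Sigma, M; \{(v_i, V_i)\})$: the same word in the $\tau_{v_i}$ satisfies $\prod_j \tau_{v_{i_j}}^{n m_j} = \id \in \pi_0\Symp^c(\Sigma) = \Mod(\Sigma,\partial)$. But by Koberda this forces $w = 1$ in $A(\Gamma)$ (for $n \geq N_0$), a contradiction. Hence for $n \geq N := N_0$ the homomorphism is injective, and since the only relations in $A(\Gamma)$ are the commutations along edges of $\Gamma$, which we have matched exactly, $\{\tau_{V_1}^n,\dots,\tau_{V_l}^n\}$ is indeed a right-angled Artin system for an $A(\Gamma)$-subgroup of $\pi_0\Symp^c(M)$.

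For the final ``moreover'' clause I would repeat the argument using Theorem \ref{thm:main_noncompact} in place of Theorem \ref{thm:main}. Concretely: apply Lemma \ref{lma:non_cpt} to $(\Sigma; \{v_i\}_{i=1,\dots,l})$ to enlarge $\Sigma$ to $\tilde\Sigma$ with extra exact curves $w_1,\dots,w_m$ detecting triviality in $\pi_0\Symp^c(\Sigma)$, then take any $(n-1)$-stabilisation $(\tilde\Sigma, \tilde M; \{(v_i,V_i)\} \cup \{(w_j, W_j)\})$ with $M \subset \tilde M$ an exact open embedding (such exists by iterating the stabilisation construction on the larger curve collection, as in the proof of Proposition \ref{prp:dim_equalities_id}). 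The commutation relations among the $\tau_{V_i}^n$ already hold in $\pi_0\Symp^c(\tilde M)$ — hence in $\AutFuk(\tilde M)$ — for the same disjointness reason. If a reduced word $w \neq 1$ in $A(\Gamma)$ gave $\prod_j \tau_{V_{i_j}}^{n m_j} = \id \in \AutFuk(\tilde M)$, then Theorem \ref{thm:main_noncompact} yields $\prod_j \tau_{v_{i_j}}^{n m_j} = \id \in \pi_0\Symp^c(\Sigma)$, again contradicting Koberda for $n$ large. I expect the main subtlety to be the bookkeeping in this last step: one must ensure the index set $I$ of Lemma \ref{lma:non_cpt} (pairs with $I_\text{min} \in \{0,1\}$) and the coincidence-graph relations among the $v_i$, $i \le l$, are tracked simultaneously, and that enlarging the curve collection to build $\tilde M$ does not introduce new isotopies collapsing the $V_i$ — but this is exactly the content already established in Proposition \ref{prp:dim_equalities_id} and Lemma \ref{lma:non_cpt}, so no genuinely new geometric input is required.
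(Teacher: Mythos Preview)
Your proposal is correct and follows essentially the same approach as the paper: the argument is exactly the one sketched in the paragraph preceding the proposition, namely that the RAAG commutation relations lift because disjoint curves give disjoint matching cycles, while injectivity follows from Theorem~\ref{thm:main} (resp.\ Theorem~\ref{thm:main_noncompact} for the $\AutFuk$ statement) combined with Koberda's theorem. Your write-up simply unpacks the paper's one-paragraph sketch in more detail, including the explicit invocation of Lemma~\ref{lma:non_cpt} for the ``moreover'' clause.
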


By building a surface $\Sigma$ with a suitable collection of exact Lagrangian $S^1$s (for instance, using plumbings), we get the following.

\begin{cor}\label{cor:RAAGs}
Given any right-angled Artin group $A$, and any $m \geq 2$, there exist  infinitely many $2m$--dimensional simply connected Weinstein domains $M$ such that $A$ embeds into $\pi_0 \, \Symp^c (M)$; the generators of $A$ are given by powers of Dehn twists.
Moreover, we can arrange for the embedding to also hold into $\AutFuk(M)$. 
\end{cor}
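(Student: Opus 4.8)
The plan is to derive Corollary~\ref{cor:RAAGs} from Proposition~\ref{prp:embed_RAAG}. Fix a right-angled Artin group $A = A(\Gamma)$, with $\Gamma$ a finite graph, and an integer $m \geq 2$. The first step is to produce a two-dimensional Liouville domain $\Sigma$ carrying an irredundant collection of exact $S^{1}$s whose coincidence graph is exactly $\Gamma$. Write $V(\Gamma) = \{1, \dots, l\}$ and let $\Gamma^{c}$ be the complementary graph (an edge exactly where $\Gamma$ has none). Plumbing copies of $T^{\ast}S^{1}$ along $\Gamma^{c}$ produces a surface with zero-sections $v_{1}, \dots, v_{l}$ for which $v_{i}$ and $v_{j}$ are disjoint exactly when $\{i,j\}$ is an edge of $\Gamma$, and meet transversally in one point otherwise; hence $\Imin(v_{i}, v_{j}) = 0$ precisely when $\{i,j\} \in E(\Gamma)$, so the coincidence graph of $\{v_{i}\}$ is $\Gamma$. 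Irredundancy, i.e.\ that no $v_{i}$ is isotopic to another, is arranged by a harmless local modification if necessary (for instance connect-summing a small distinguishing handle near each zero-section, or appealing to the standard realisations of right-angled Artin groups by simple closed curves with prescribed coincidence graph). Finally, puncturing the surface at finitely many interior points away from the $v_{i}$ lets one choose an exact symplectic form for which every $v_{i}$ is exact, since each puncture enlarges $H^{1}$ and hence the space of admissible Liouville forms; call the result $\Sigma$.

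Next I would enlarge the collection by finitely many further exact $S^{1}$s $v_{l+1}, \dots, v_{l+r}$ (puncturing $\Sigma$ a little more if needed to exactify them) whose classes normally generate $\pi_{1}(\Sigma)$, and form the Lefschetz $(m-1)$-stabilisation $(\Sigma, M; \{(v_{i}, V_{i})\})$, so that $\dim M = 2m$. The first stabilisation builds its total space from $\Sigma \times D^{2}$ by attaching $2$-handles along the vanishing cycles, in particular along each $v_{i}$; its fundamental group is therefore $\pi_{1}(\Sigma)$ modulo the normal closure of the classes $[v_{i}]$, which is trivial. The remaining $m-2$ stabilisations attach handles of index $\geq 3$ and leave $\pi_{1}$ unchanged, so $M$ is a simply connected Weinstein domain of dimension $2m$.

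Now apply Proposition~\ref{prp:embed_RAAG} with $I = \{1, \dots, l\}$: since $\{v_{1}, \dots, v_{l}\}$ is irredundant, there is an $N$ such that for all $n \geq N$ the mapping classes $\{\tau_{V_{1}}^{n}, \dots, \tau_{V_{l}}^{n}\}$ form a right-angled Artin system for the right-angled Artin group on the coincidence graph of $\{v_{1}, \dots, v_{l}\}$, which is $A(\Gamma) = A$, inside $\pi_{0}\, \Symp^{c}(M)$, with all generators powers of Dehn twists. The ``moreover'' of Proposition~\ref{prp:embed_RAAG} upgrades this to an embedding into $\AutFuk(\tilde{M})$ for a larger Weinstein domain $\tilde{M} \supset M$, again of dimension $2m$ and still simply connected (the extra handles used to build $\tilde{M}$ have index $\geq 2$, so they cannot destroy simple connectivity); renaming $\tilde{M}$ as $M$ gives the last clause of the corollary. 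Finally, to obtain \emph{infinitely many} such $M$, vary the auxiliary data (for instance plumb in additional $T^{\ast}S^{1}$s disjoint from $v_{1}, \dots, v_{l}$, or add more punctures): this leaves the subcollection $\{v_{1}, \dots, v_{l}\}$ and hence the embedding it produces untouched, while making, say, $\dim H_{m}(M; \Z/2)$ grow without bound, so the resulting domains realise infinitely many distinct symplectomorphism types.

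The work is concentrated in the first two steps, and is bookkeeping rather than a genuine difficulty: one must simultaneously realise $\Gamma$ as a coincidence graph by an \emph{irredundant} exact family, keep that family exact after the punctures this forces, and introduce the auxiliary curves needed to kill $\pi_{1}(M)$ without disturbing the isotopy classes or the minimal intersection numbers among $v_{1}, \dots, v_{l}$. Each of these uses only constructions already present in the paper, so no genuinely new idea is needed beyond Proposition~\ref{prp:embed_RAAG}.
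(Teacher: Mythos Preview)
Your proposal is correct and follows essentially the same route as the paper, which simply says ``By building a surface $\Sigma$ with a suitable collection of exact Lagrangian $S^1$s (for instance, using plumbings), we get the following'' and leaves the details to the reader; you have supplied exactly those details (plumb along the complement graph $\Gamma^c$, add auxiliary curves to kill $\pi_1$, apply Proposition~\ref{prp:embed_RAAG}).

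One small point to tighten: your justification that $\tilde{M}$ remains simply connected is not quite right. The passage from $M$ to $\tilde{M}$ in Proposition~\ref{prp:embed_RAAG} goes via Lemma~\ref{lma:non_cpt}: one first attaches \emph{one}-handles to $\Sigma$ to get $\tilde{\Sigma}$, and then stabilises $(\tilde{\Sigma}, \{v_i\} \cup \{w_j\})$. So $\tilde{\Sigma}$ has strictly larger fundamental group than $\Sigma$, and the extra vanishing cycles $w_j$ need not normally generate it; the claim that ``the extra handles have index $\geq 2$'' does not apply at the surface level. The fix is exactly the one you already used for $M$: throw in yet more exact $S^1$s whose conjugacy classes generate $\pi_1(\tilde{\Sigma})$ before stabilising. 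Since Proposition~\ref{prp:embed_RAAG} is applied with $I = \{1,\ldots,l\}$ only, this leaves the RAAG embedding untouched.
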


\begin{rmk}
The generators of $A$ are of the form $\tau^n_{V}$, for any sufficiently large $n$. If $m$, the complex dimension of $M$, is even, then any Dehn twist $\tau_V$ has finite order (up to isotopy) as a compactly supported diffeomorphism \cite{Krylov}. Thus in those cases the RAAG also lies in the kernel of the forgetful map 
$$
\pi_0 \, \Symp^c (M) \to \pi_0 \, \text{Diff}^c (M).
$$
\end{rmk}

\subsection{Decision problems}

We can use Corollary \ref{cor:RAAGs} to apply some decision-theoretic results about RAAGs, due to Bridson, to symplectic mapping class groups. We give basic relevant definitions; for further background, see \cite{Miller}. 

Given a finitely generated group $G$, the conjugacy problem asks for an algorithm that will determine, given a pair of words, whether they are conjugate elements of $G$. The membership problem for a subgroup $H$ of $G$ asks for an algorithm that, given a word in the generators of $G$, will determine whether the corresponding element in $G$ lies in $H$ or not.

\begin{thm}\label{thm:Bridson2} \cite[Theorem 1.2]{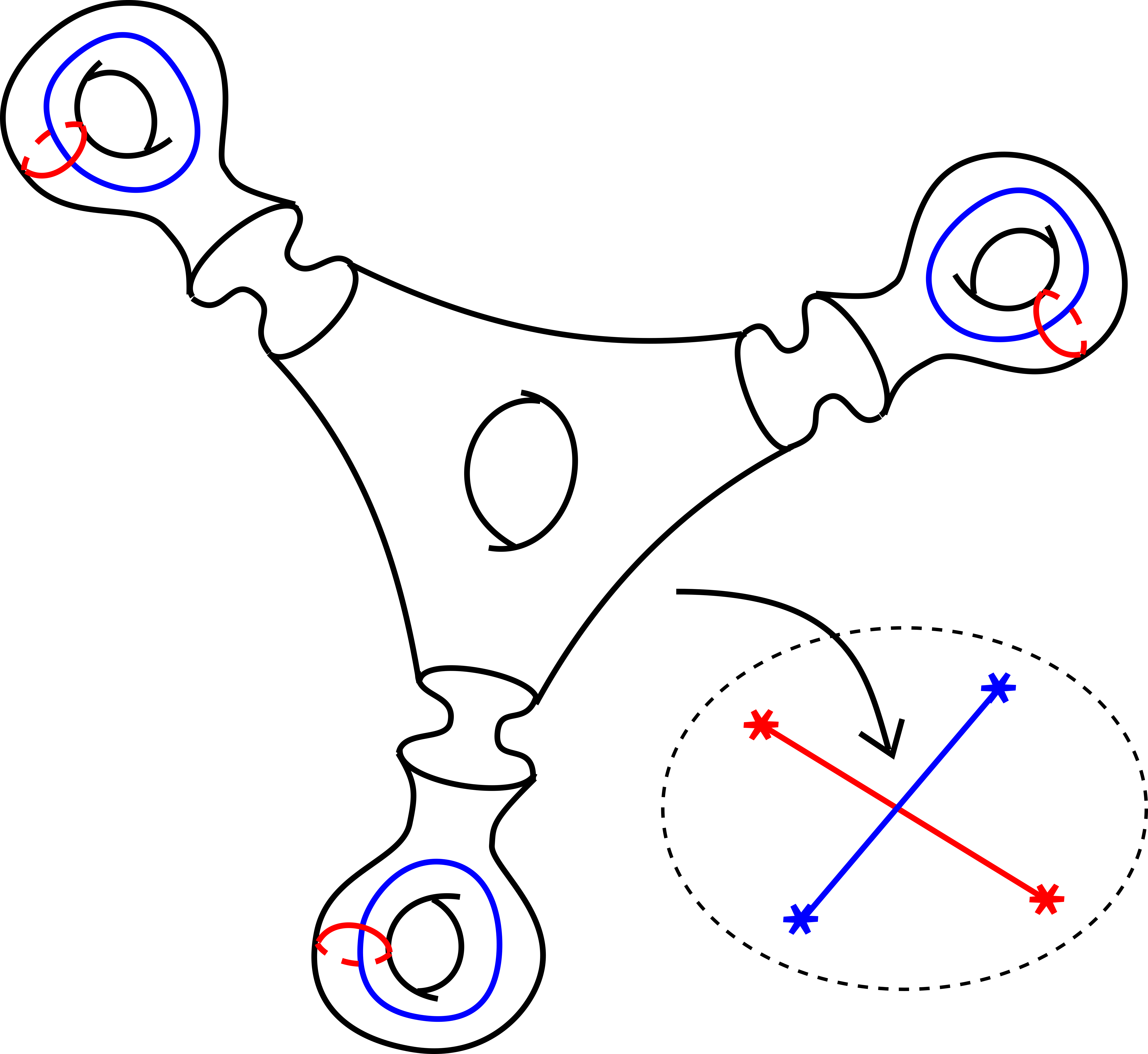}
There exists a right angled Artin group $A$ and a finitely presented subgroup $H$ of $A$ such that the conjugacy and membership problems are unsolvable for $H$.
\end{thm}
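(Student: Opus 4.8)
The plan is to realise both undecidable problems simultaneously inside a single fibre product, exploiting that a direct product of right-angled Artin groups is again a right-angled Artin group. Concretely, I would start from a short exact sequence $1 \to N \to \Gamma \to Q \to 1$ and form the symmetric fibre product
$$
P = \{ (\gamma_1, \gamma_2) \in \Gamma \times \Gamma \mid \gamma_1 N = \gamma_2 N \} \leq \Gamma \times \Gamma,
$$
so that deciding whether $(\gamma, 1) \in P$ is equivalent to deciding whether $\gamma \in N$, i.e. whether $\bar\gamma = 1$ in $Q$. Hence if $Q$ has unsolvable word problem, the membership problem for $P$ in $\Gamma \times \Gamma$ is unsolvable. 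If moreover $\Gamma$ embeds in a right-angled Artin group $A'$ as a virtual retract (so that membership of $\Gamma$ in $A'$ is decidable), then $\Gamma \times \Gamma$ embeds in the right-angled Artin group $A := A' \times A'$, and membership of $P$ in $A$ remains unsolvable. Taking $H := P$ would furnish the required pair.

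The two structural facts that make this work are: (i) finite presentability of $P$, which I would obtain from the $1$--$2$--$3$ theorem of Baumslag--Bridson--Miller--Short, which requires $N$ finitely generated, $\Gamma$ finitely presented, and $Q$ of type $F_3$; and (ii) the existence of the sequence $1 \to N \to \Gamma \to Q \to 1$ with $\Gamma$ embeddable in a right-angled Artin group. For (ii) I would invoke a special Rips construction in the sense of Wise: given any finitely presented $Q$ it produces such a sequence with $N$ finitely generated and $\Gamma$ virtually special, so that after passing to a finite-index subgroup $\Gamma$ embeds as a virtual retract of a right-angled Artin group. The last remaining input is a group $Q$ that is simultaneously of type $F_3$ and has unsolvable word problem; such groups are known to exist (e.g. by a finiteness-preserving embedding of a Novikov--Boone group into a group of type $F_\infty$).

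For the conjugacy problem I would argue in the same framework, following Miller and Bridson--Miller: conjugacy of elements of the form $(g,1)$ and $(g',1)$ inside $P$ can be made to encode the word and conjugacy problems of $Q$, so that unsolvability of the word problem of $Q$ forces the conjugacy problem of $H = P$ to be unsolvable as well. The main obstacle is not any single step but their coordination: one must run the Rips construction so that $\Gamma$ is right-angled-Artin-embeddable while simultaneously keeping $Q$ of type $F_3$ (so that the $1$--$2$--$3$ theorem genuinely applies and $H$ is finitely presented), and then verify that the \emph{same} subgroup $H$ witnesses unsolvability of both the membership problem in $A$ and its own conjugacy problem. Arranging the conjugacy encoding robustly while retaining all the finiteness constraints demanded by the $1$--$2$--$3$ theorem is the delicate part.
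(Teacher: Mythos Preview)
The paper does not prove this statement: it is quoted verbatim as \cite[Theorem 1.2]{Bridson} and used as a black box to deduce Corollary~\ref{cor:conjugacy_and_membership}. There is therefore no ``paper's own proof'' to compare against; your sketch is an outline of Bridson's argument itself rather than of anything in the present article.

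That said, your outline is broadly faithful to Bridson's strategy: the special Rips construction of Haglund--Wise to obtain a short exact sequence with right-angled-Artin-embeddable middle term, the fibre product $P \leq \Gamma \times \Gamma$, and the $1$--$2$--$3$ theorem for finite presentability of $P$. One point to be careful with: you speak of passing to a finite-index subgroup of $\Gamma$ to embed it in a RAAG, but the fibre product you built lives in $\Gamma \times \Gamma$, not in $\Gamma_0 \times \Gamma_0$; one must either arrange that $\Gamma$ itself (not just virtually) embeds in a RAAG, or reconstruct the fibre product over the restricted quotient and re-verify the $1$--$2$--$3$ hypotheses there. Bridson handles this by invoking the Haglund--Wise result that $C'(1/6)$ groups are special (not merely virtually special), so $\Gamma$ embeds directly. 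For the purposes of the present paper, however, none of this is required: you are free to cite the theorem as the author does.
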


\begin{cor}\label{cor:conjugacy_and_membership}
For any $n \geq 2$, we can construct infinitely many simply connected Weinstein domains $M$ of dimension $2n$ such that there are finitely presented subgroups of $\pi_0 \, \Symp^c (M)$ with unsolvable conjugacy and membership problems. Similarly with $\AutFuk (M)$. 
\end{cor}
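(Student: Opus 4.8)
The plan is to deduce Corollary~\ref{cor:conjugacy_and_membership} from Bridson's Theorem~\ref{thm:Bridson2} by transporting the situation along the embedding furnished by Corollary~\ref{cor:RAAGs}. First I would invoke Theorem~\ref{thm:Bridson2} to fix a right-angled Artin group $A$ together with a finitely presented subgroup $H \leq A$ for which both the conjugacy problem and the membership problem are algorithmically unsolvable. Then, for each $n \geq 2$, I would apply Corollary~\ref{cor:RAAGs} (with $m = n$) to this particular $A$: it produces infinitely many simply connected $2n$--dimensional Weinstein domains $M$ and an injective homomorphism $\rho : A \hookrightarrow \pi_0 \, \Symp^c(M)$ whose image is generated by suitable powers of Dehn twists in Lagrangian spheres. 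Since $\rho$ is injective and $H$ is finitely presented, $\rho(H)$ is a finitely presented subgroup of $\pi_0 \, \Symp^c(M)$; the same corollary also gives the analogous embedding into $\AutFuk(M)$, handling that half of the statement in parallel.

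The key remaining point is that unsolvability of these decision problems is inherited by $\rho(H) \leq \pi_0 \, \Symp^c(M)$ from $H \leq A$. For the conjugacy problem: since $\rho$ is an injective homomorphism, two elements $h_1, h_2 \in H$ are conjugate in $H$ if and only if $\rho(h_1), \rho(h_2)$ are conjugate in $\rho(H)$ (conjugacy is preserved and reflected by an injective homomorphism, when testing conjugacy \emph{within} the subgroup), so any algorithm solving the conjugacy problem in $\rho(H)$ would, after composing with the computable maps $h \mapsto \rho(h)$ on the finite generating sets, solve it in $H$ --- a contradiction. For the membership problem one argues similarly: deciding, for $g \in A$, whether $g \in H$ is equivalent to deciding, for $\rho(g) \in \rho(A)$, whether $\rho(g) \in \rho(H)$, and both $\rho(A)$ and $\rho(H)$ come with explicit finite generating sets and an explicit map from generators of $A$ to generators of $\pi_0 \, \Symp^c(M)$, so a membership algorithm for $\rho(H)$ inside $\pi_0\,\Symp^c(M)$ (or inside $\rho(A)$) would yield one for $H$ inside $A$. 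One should be slightly careful to state membership for $\rho(H)$ as a subgroup of the ambient $\pi_0\,\Symp^c(M)$ --- which is the natural formulation --- and note this is at least as hard as membership of $\rho(H)$ in $\rho(A)$, since $\rho(A)$ is itself a recursively presented subgroup.

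I do not expect a serious obstacle here: the entire argument is a routine transport of decision-theoretic pathology along an explicit embedding, and all the genuine content (the existence of the embedding, and Bridson's construction) is imported as a black box. The one item deserving a sentence of care is making sure the embedding $\rho$ is \emph{effective} in the sense that generators of $A$ map to an explicitly writable word-list in the generators of $\pi_0\,\Symp^c(M)$ (here, explicit powers $\tau^n_{V_i}$), so that a hypothetical algorithm on the symplectic side can be pulled back to an algorithm on the RAAG side; this is immediate from the proof of Corollary~\ref{cor:RAAGs}, where the generators of $A$ are named as specific $\tau^n_{V_i}$. With that observed, the proof is:

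\begin{proof}
Let $A$ and $H \leq A$ be as in Theorem~\ref{thm:Bridson2}, so that $H$ is finitely presented and the conjugacy and membership problems for $H$ are unsolvable. Fix $n \geq 2$ and apply Corollary~\ref{cor:RAAGs} with $m = n$: there are infinitely many simply connected $2n$--dimensional Weinstein domains $M$ together with an injective homomorphism $\rho : A \hookrightarrow \pi_0 \, \Symp^c(M)$ (and, likewise, $\rho' : A \hookrightarrow \AutFuk(M)$) sending the standard generators of $A$ to explicit powers $\tau^n_{V_i}$ of Dehn twists. Then $\rho(H)$ is a finitely presented subgroup of $\pi_0 \, \Symp^c(M)$. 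Since $\rho$ is an injective homomorphism carrying an explicit generating set of $A$ to an explicit word-list in the generators of $\pi_0 \, \Symp^c(M)$, any algorithm deciding conjugacy of pairs of elements within $\rho(H)$, respectively membership of elements of $\pi_0\,\Symp^c(M)$ in $\rho(H)$, would, precomposed with $\rho$, decide conjugacy within $H$, respectively membership in $H$ --- contradicting Theorem~\ref{thm:Bridson2}. Hence both problems are unsolvable for $\rho(H) \leq \pi_0 \, \Symp^c(M)$. The same argument applied to $\rho'$ gives the statement for $\AutFuk(M)$.
\end{proof}
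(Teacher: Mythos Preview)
Your argument is correct and matches the paper's approach: the paper states the corollary without proof, treating it as an immediate consequence of Theorem~\ref{thm:Bridson2} and Corollary~\ref{cor:RAAGs}, and what you have written is precisely the expected fleshing-out of that implication. Your extra care about the effectiveness of $\rho$ and the formulation of the membership problem is appropriate and not something the paper spells out.
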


Given a collection of finitely presentable groups, the isomorphism problem asks for an algorithm that, given a pair of presentations of groups in the collection, will determine whether the groups are isomorphic.

\begin{thm}\label{thm:Bridson1} \cite[Theorem 1.1]{Bridson}
There exists a right angled Artin group $A$ such that the isomorphism problem for the finitely presented subgroups of $A$ is unsolvable. 
\end{thm}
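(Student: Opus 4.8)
\textbf{Remark (proof strategy for Theorem \ref{thm:Bridson1}).}
The plan is to reduce the triviality problem for finitely presented groups, which is unsolvable by the Adyan--Rabin theorem, to the isomorphism problem for finitely presented subgroups of a single right-angled Artin group. Concretely, I would fix a recursive sequence of finite presentations $Q_n$ for which the set $\{ n : Q_n \cong 1 \}$ is not recursive, chosen moreover (as in the Bridson--Miller framework) so that each $Q_n$ is of type $F_\infty$; the point is then to attach to each $Q_n$ a finitely presented group $P_n$, realised as a subgroup of one fixed RAAG $A$, in such a way that the isomorphism type of $P_n$ faithfully records whether $Q_n$ is trivial.

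The construction of the $P_n$ proceeds in two steps. First I would apply the Rips construction in its special form (due to Wise): from the presentation $Q_n$ one produces a short exact sequence $1 \to N_n \to H_n \to Q_n \to 1$ in which $N_n$ is finitely generated and $H_n$ is word-hyperbolic, $C'(1/6)$, and virtually compact special. Using the special form is essential, since by the work of Wise and Agol each $H_n$, and hence $H_n \times H_n$, is virtually a subgroup of a RAAG; exploiting the uniformity of the Rips machine, one arranges all of the $H_n$ to embed into a single RAAG $A$. Second, I would take $P_n$ to be the fibre product
$$
P_n = \{ (h, h') \in H_n \times H_n : \phi_n(h) = \phi_n(h') \} \le H_n \times H_n \le A,
$$
where $\phi_n : H_n \to Q_n$ is the quotient map. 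Since $N_n$ is finitely generated, $H_n$ is finitely presented, and $Q_n$ is of type $F_3$, the $1$--$2$--$3$ Theorem of Baumslag--Bridson--Miller--Short guarantees that each $P_n$ is finitely presented, with a presentation computable from that of $Q_n$.

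Finally I would set up the reduction. When $Q_n \cong 1$ the map $\phi_n$ is trivial and $P_n = H_n \times H_n$, whereas when $Q_n$ is nontrivial the fibre product is a proper subgroup carrying a nontrivial $Q_n$-action on $N_n \times N_n$. Calibrating the gadgets $Q_n$ so that the two cases land in two computably separated isomorphism classes among the $P_n$, a hypothetical algorithm solving the isomorphism problem for finitely presented subgroups of $A$ would decide $\{ n : Q_n \cong 1 \}$, contradicting Adyan--Rabin.

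The hard part will be the middle step: arranging that the \emph{entire} infinite family of special hyperbolic groups $H_n$ embeds into \emph{one} fixed RAAG $A$, rather than into a sequence of RAAGs depending on $n$, while keeping everything recursive and simultaneously maintaining the $1$--$2$--$3$ hypotheses (type $F_3$ of $Q_n$) uniformly. The other delicate point is making the isomorphism reduction genuinely faithful, so that $P_n \cong P_m$ cannot hold for reasons unrelated to the triviality of the $Q_n$; this is precisely where the choice of the Adyan--Rabin gadgets must be tuned.
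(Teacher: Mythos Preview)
The paper does not prove Theorem~\ref{thm:Bridson1} at all: it is quoted verbatim as \cite[Theorem 1.1]{Bridson} and used as a black box to deduce Corollary~\ref{cor:isomorphism_pb}. So there is no ``paper's own proof'' to compare your proposal against; you are attempting to reconstruct Bridson's argument, which is outside the scope of this paper.

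That said, your sketch is broadly in the right spirit for how Bridson actually proves the result (Rips construction in its virtually special form, fibre products, the $1$--$2$--$3$ Theorem, embedding into a RAAG). You have also correctly identified the genuinely delicate point: arranging that the \emph{entire} recursive family of hyperbolic groups $H_n$ embeds into a \emph{single} fixed RAAG $A$. In Bridson's argument this is handled not by invoking Wise--Agol separately for each $H_n$, but by exploiting uniformity in the small-cancellation presentation so that all the $H_n$ are quotients of, or embed via, a single controlled object; your remark that ``this is the hard part'' is accurate, and as written your proposal does not actually resolve it. The other point you flag --- ensuring that $P_n \cong P_m$ faithfully reflects triviality of $Q_n$ --- is also real and is handled in Bridson's paper by a specific choice of seed sequence, not just an arbitrary Adyan--Rabin family. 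So your outline is a plausible roadmap but not yet a proof; for the purposes of the present paper, however, none of this is needed, since the theorem is simply cited.
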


\begin{cor}\label{cor:isomorphism_pb}
For any $n \geq 2$, we can construct infinitely many simply connected Weinstein domains $M$ of dimension $2n$ such that the isomorphism problem for subgroups of $\pi_0 \, \Symp^c(M)$ is unsolvable. Similarly with $\AutFuk(M)$. 
\end{cor}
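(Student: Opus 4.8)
The plan is to derive this as an immediate consequence of Corollary \ref{cor:RAAGs} together with Bridson's Theorem \ref{thm:Bridson1}, in exactly the same spirit as Corollary \ref{cor:conjugacy_and_membership}. First I would invoke Theorem \ref{thm:Bridson1} to fix a right-angled Artin group $A$ for which the isomorphism problem for the finitely presented subgroups of $A$ is unsolvable. Then, applying Corollary \ref{cor:RAAGs} with this $A$ and the given $n \geq 2$, I obtain infinitely many $2n$--dimensional simply connected Weinstein domains $M$ together with embeddings $A \hookrightarrow \pi_0 \, \Symp^c(M)$ and $A \hookrightarrow \AutFuk(M)$, with the generators of $A$ realised by powers of Dehn twists. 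Fix one such $M$.

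The next step is the reduction. Since the embedding $A \hookrightarrow \pi_0 \, \Symp^c(M)$ is injective, every finitely presented subgroup $H \leq A$ is also a finitely presented subgroup of $\pi_0 \, \Symp^c(M)$; so the collection of isomorphism types of finitely presented subgroups of $\pi_0 \, \Symp^c(M)$ contains all finitely presented subgroups of $A$. Consequently, an algorithm that, given two finite presentations each known to present a subgroup of $\pi_0 \, \Symp^c(M)$, decides whether the presented groups are isomorphic, would in particular decide isomorphism among finite presentations of subgroups of $A$, contradicting Theorem \ref{thm:Bridson1}. Hence the isomorphism problem for subgroups of $\pi_0 \, \Symp^c(M)$ is unsolvable, and the identical argument with $\AutFuk(M)$ in place of $\pi_0 \, \Symp^c(M)$ handles that case.

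There is no substantive obstacle here: all the geometric and group-theoretic work has already been carried out in Corollary \ref{cor:RAAGs} and in \cite{Bridson}. The only point deserving a line of care is that the displayed implication is a genuine reduction in the computability-theoretic sense — i.e.\ that a finite presentation witnessing a subgroup of $A$ counts verbatim, with no further computation, as a finite presentation witnessing a subgroup of $\pi_0 \, \Symp^c(M)$. This is immediate from injectivity of the embedding, since the isomorphism problem as formulated takes presentations as input without requiring that membership in the relevant class of subgroups be algorithmically certified. Thus the corollary follows by unwinding definitions.
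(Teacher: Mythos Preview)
Your proposal is correct and matches the paper's approach: the corollary is stated immediately after Theorem \ref{thm:Bridson1} with no proof given, precisely because it follows directly from that theorem together with Corollary \ref{cor:RAAGs} via the reduction you describe.
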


The reader might wish to compare this with Seidel's results on undecideability and symplectic cohomology \cite[Corollary 6.8]{Seidel_biased}.

\subsection{Virtually special groups}

A group $H$ virtually embeds into a group $\Gamma$ if $H$ has a finite index normal subgroup $H_0$ such that $H_0$ embeds into $\Gamma$.
Part of the importance of RAAGs in geometric topology comes from the fact that large classes of groups virtually embed into them.
For example, any virtually special group, in the sense of Haglund and Wise \cite{Haglund-Wise}, virtually embeds into a RAAG. Virtually special groups include, for instance, the fundamental group of any hyperbolic three-manifold \cite{Agol, Bergeron-Wise}, and any finitely generated Coxeter group \cite{Haglund-Wise_coxeter}.

In order to make use of this, we will generalise a construction of Bridson \cite[Section 5]{Bridson}. We start with more background from geometric topology.

\begin{dfn}
The wreath product $ \Gamma \wr G$ of groups is the semi-direct product $G \ltimes \prod_{g \in G } \Gamma_g$, where each $\Gamma_g$ is an isomorphic copy of $\Gamma$, and $G$ acts by left translation. 
\end{dfn}

\begin{thm}Kaloujnine-Krasner embedding \cite{KK}.
Suppose $H_0 \lhd H$ is a finite index normal subgroup, with quotient $G$, and that there exists an embedding $H_0 \hookrightarrow \Gamma$ for some group $\Gamma$. Then the natural map $H \to \Gamma \wr G$ is also an embedding. 
\end{thm}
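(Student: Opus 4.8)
The plan is to exhibit an explicit injective homomorphism, following the classical argument of Kaloujnine and Krasner; the only real work is bookkeeping to match the conventions used to define $\Gamma \wr G = G \ltimes \prod_{g \in G}\Gamma_g$ above. Write $p \colon H \to G$ for the quotient map, fix a set-theoretic section $s \colon G \to H$ of $p$ normalised so that $s(1) = e$, and let $j \colon H_0 \hookrightarrow \Gamma$ be the given embedding. For $h \in H$ and $g \in G$ the element $s(g)^{-1}\, h\, s(p(h)^{-1}g)$ maps under $p$ to $g^{-1}p(h)\,(p(h)^{-1}g) = 1$, hence lies in $H_0$; so $\phi_h(g) := j\big(s(g)^{-1}\, h\, s(p(h)^{-1}g)\big)$ defines a function $\phi_h \colon G \to \Gamma$, i.e.\ an element of $\prod_{g \in G}\Gamma_g$. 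Set $\psi(h) := \big(p(h), \phi_h\big) \in \Gamma \wr G$. This is the natural map of the statement; a different choice of $s$ only changes $\psi$ by conjugation, so it suffices to treat one such $\psi$.

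Next I would check that $\psi$ is a homomorphism. With $G$ acting on $\prod_g \Gamma_g$ by left translation, as in the definition above, the product in $\Gamma \wr G$ has first coordinate $p(h_1)p(h_2)$ and second-coordinate value at $g$ equal to $\phi_{h_1}(g)\,\phi_{h_2}\big(p(h_1)^{-1}g\big)$. Since $j$ is a homomorphism, this value equals $j\big(s(g)^{-1}h_1\, s(p(h_1)^{-1}g)\cdot s(p(h_1)^{-1}g)^{-1} h_2\, s(p(h_2)^{-1}p(h_1)^{-1}g)\big)$, and the two middle section terms cancel, leaving $j\big(s(g)^{-1}(h_1 h_2)\, s(p(h_1 h_2)^{-1}g)\big) = \phi_{h_1 h_2}(g)$. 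Hence $\psi(h_1)\psi(h_2) = \psi(h_1 h_2)$. Note that the factor set of the extension $1 \to H_0 \to H \to G \to 1$ never appears: it is exactly this telescoping that makes the wreath product the correct target.

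Finally, injectivity is immediate. If $\psi(h)$ is the identity of $\Gamma \wr G$, then $p(h) = 1$, so $h \in H_0$; evaluating the function coordinate at $g = 1$ gives $1 = \phi_h(1) = j\big(s(1)^{-1}h\, s(1)\big) = j(h)$, whence $h = e$ because $j$ is injective. I do not anticipate a genuine obstacle: the argument is entirely formal. The one point requiring attention is that the definition of $\Gamma \wr G$ in the excerpt has $G$ acting on the index set by left translation, so I must make sure the left/right conventions in the displayed homomorphism computation are the compatible ones (equivalently, replace $\phi_h$ by $g \mapsto \phi_h(g^{-1})$ if the opposite convention is in force); with the conventions aligned, the cancellation above goes through verbatim.
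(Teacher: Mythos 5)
Your argument is correct and complete: the twisted-coordinate formula $\phi_h(g) = j\bigl(s(g)^{-1}\,h\,s(p(h)^{-1}g)\bigr)$ is exactly the classical Kaloujnine--Krasner construction, the telescoping in the homomorphism check is right for the left-translation convention used to define $\Gamma \wr G$ here, and evaluating at $g=1$ gives injectivity. The paper itself offers no proof of this statement --- it is quoted from \cite{KK} --- so there is nothing to compare against beyond noting that yours is the standard argument, correctly adapted to the stated conventions.
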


Next, we present a variation of the construction of the proof of \cite[Proposition 5.1]{Bridson}.
Let $\Sigma$ be a real two-dimensional Liouville domain.  Let $G$ be a finite group. This has a realisation as a group of symmetries of a closed surface. Realise $G$ on such a surface, and equivariantly delete an open disc about each point in a free orbit. Let $S$ be the resulting surface with boundary. This can be equipped with the structure of an exact Liouville domain; moreover, by averaging over the action of $G$, we can assume that the Liouville form is $G$--equivariant. Now take $|G|$ copies of $\Sigma$, labeled, say, as $\Sigma_g$, for $g \in G$. Fix a component of $\partial \Sigma$. For each $g \in G$, perform a boundary connected sum between the corresponding component of $\partial \Sigma_g$ and the boundary component of $S$ labeled by $g$. Let $\Sigma_G$ be the resulting surface with boundary; it carries an induced $G$--action, and can be equipped with a $G$--equivariant Liouville form which agrees with the Liouville forms on $S$ and each of the $\Sigma_g$ outside of a neighbourhood of the one-handles used for the boundary connected sum. 

Next, we generalise this construction to higher dimensions, as follows. 

\begin{lma}\label{lma:equivariant_stabilisation}
Let $(\Sigma, \{ v_1, \ldots, v_k \} )$ be an exact real two-dimensional Liouville domain together with a collection of exact Lagrangian $S^1$s. For a finite group $G$, let $\Sigma_G$ be as above, and let $v^g_i$ be the copy of $v_i$ in $\Sigma_g \subset \Sigma_G$. Then in any dimension, there exist Lefschetz stabilisations of $(\Sigma_G, \{ v^g_i  | \, g \in G, i=1, \ldots, k  \} )$ that carry a $G$--action extending that on $\Sigma_G$. 
This action does not have compact support, but can be arranged to be strictly exact. 
\end{lma}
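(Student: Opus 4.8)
The plan is to build the Lefschetz stabilisations one step at a time, keeping track of the $G$-action throughout, and to arrange at each step that the new Lefschetz fibration is $G$-equivariant in a way that restricts to the identity on the base $\mathbb{D}$ (so that $G$ acts fibrewise). First I would recall that, by construction, $\Sigma_G$ carries a $G$-action with $G$-equivariant Liouville form, and that the curves $v^g_i$ are permuted among themselves: $h \cdot v^g_i = v^{hg}_i$. Since a one-stabilisation of $(\Sigma_G, \{v^g_i\})$ is the total space of a Lefschetz fibration over $\mathbb{D}$ with ordered vanishing cycles $v^{g}_{\sigma(1)}, \dots, v^g_{\sigma(k)}$ for each $g$ (repeated twice), the key point is that $G$ acts on the \emph{set} of vanishing cycles compatibly with the permutation structure, so one can choose the fibration so that $G$ acts on the total space covering the trivial action on $\mathbb{D}$. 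Concretely: take the fibre to be $\Sigma_G$, distribute the $2k|G|$ critical values around the boundary of $\mathbb{D}$ in blocks indexed by $g \in G$ so that the $g$-block and the $(hg)$-block are identified under $h$, choose the matching paths $G$-equivariantly, and build the total space as the standard model of a Lefschetz fibration with these data (Seidel's construction, \cite[Section 16]{Seidel_book}); the $G$-action on fibres, being by exact symplectomorphisms preserving the vanishing-cycle combinatorics, extends over the total space. One then takes matching cycles $V^g_i$ over the $G$-equivariant matching paths, and these are permuted as $h \cdot V^g_i = V^{hg}_i$.

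Next I would iterate: having produced a $G$-equivariant one-stabilisation $(M_1, \{V^g_i\})$, the fibre $M_1$ again carries a $G$-action preserving the exact symplectic structure and permuting the $V^g_i$, so the same construction applies verbatim to produce a $G$-equivariant one-stabilisation of $(M_1, \{V^g_i\})$, and so on up to the desired dimension. At each stage the $G$-action is fibrewise over the new base disc, hence commutes with the projections, so the composite is a genuine Lefschetz stabilisation in the sense of the definition, now carrying a $G$-action extending the one on $\Sigma_G$.

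Finally I would address the two parenthetical assertions. The action cannot have compact support: $G$ acts on $\Sigma_G$ partly through the surface $S$, which has a full collar neighbourhood of $\partial \Sigma_G$ on which the action is nontrivial, and this propagates to the (horizontal and vertical) boundary of each stabilisation; so $G$ moves points arbitrarily close to $\partial M$. To get strict exactness, I would average: the $G$-action constructed above preserves the symplectic form, and since each constituent piece is built from Weinstein/Liouville models with explicit primitives, one can average the Liouville form for $\Sigma_G$ over $G$ (as in the construction of $S$ itself) and then, at each Lefschetz-fibration stage, use Seidel's model in which the total-space Liouville form is assembled from the fibrewise one plus a standard contribution from the base; averaging the fibrewise primitive over $G$ and noting the base contribution is already $G$-invariant (the action covers $\id_{\mathbb{D}}$) yields a $G$-invariant primitive, i.e.\ the action is strictly exact. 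The main obstacle I expect is the bookkeeping in the first paragraph: verifying carefully that Seidel's total-space construction can be carried out $G$-equivariantly — i.e.\ that the choices of symplectic connection, parallel transport, and fibre-gluing can all be made $G$-invariant when the critical values and matching paths are arranged $G$-symmetrically and $G$ covers the trivial action on the base. Once that equivariant model is in hand, the iteration and the exactness statement are routine.
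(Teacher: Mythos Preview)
There is a genuine gap in the main construction. You assert both that the $G$--action on the total space covers the identity on the base disc (``$G$ acts fibrewise'', ``the action covers $\id_{\mathbb D}$'') and that it permutes the critical values (``the $g$-block and the $(hg)$-block are identified under $h$''). These are incompatible. If the base action is trivial, then the local monodromy around each critical value must be $G$--invariant; but the Dehn twist $\tau_{v_i^g}$ is conjugated by $h$ to $\tau_{v_i^{hg}}$, which is a different mapping class, so no Lefschetz fibration with the individual $v_i^g$ as vanishing cycles can carry a fibrewise $G$--action. Conversely, a nontrivial symplectic action of $G$ on the disc permuting the critical values would have to factor through a cyclic group, which fails for general finite $G$. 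So neither reading of your construction goes through as written.

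The paper's proof supplies the missing idea: exploit the fact that for fixed $i$ the curves $v_i^{g_1}, \ldots, v_i^{g_{|G|}}$ lie in pairwise disjoint subsurfaces $\Sigma_{g}$ of $\Sigma_G$, hence are pairwise disjoint. One first writes down the Lefschetz fibration with ordered vanishing cycles
\[
v_1^{g_1}, \ldots, v_1^{g_{|G|}},\, v_2^{g_1}, \ldots, v_2^{g_{|G|}},\, \ldots,\, v_k^{g_{|G|}}
\]
(repeated), and then deforms it so that, for each $i$, the $|G|$ critical values with vanishing cycles $v_i^{g_1}, \ldots, v_i^{g_{|G|}}$ coalesce. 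The resulting fibration has $2k$ singular values with \emph{generalised} vanishing cycles $\bigsqcup_{g} v_i^{g}$, each of which is $G$--invariant. Now the fibrewise $G$--action on $\Sigma_G$ genuinely extends over the total space with trivial action on the base, and one iterates. Your treatment of strict exactness and non-compact support is fine once this equivariant model is in hand; what is missing is precisely this merging-of-critical-values step that makes the vanishing cycles themselves $G$--invariant.
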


(Recall $f$ is strictly exact if $f^\ast \theta = \theta$, where $\theta$ is the Liouville form.)

\begin{proof}
Enumerate the elements of $G$ as $g_1, \ldots, g_{|G|}$. 
Consider the Lefschetz fibration with fibre $\Sigma_G$ and distinguished collection of vanishing cycles:
%\begin{multline}
$$
v_1^{g_1}, v_1^{g_2}, \ldots, v_1^{g_{|G|}},  v_2^{g_1}, v_2^{g_2}, \ldots, v_k^{g_{|G|}}, v_1^{g_1}, v_1^{g_2}, \ldots, v_1^{g_{|G|}}, 
v_2^{g_1}, v_2^{g_2}, \ldots, v_k^{g_{|G|}}.
$$
%\end{multline}
Note that for any $i$, the cycles $v_i^{g_1}, \ldots, v_i^{g_{|G|}}$ are pairwise disjoint. In particular, we can deform the Lefschetz fibration until the first $|G|$ critical values merge, the next $|G|$ of them also merge, etc, to get a fibration with $2k$ critical values, and generalised vanishing cycles of the form $v_i^{g_1} \sqcup v_i^{g_2} \sqcup \ldots \sqcup v_i^{g_{|G|}}$. See Figure \ref{fig:Bridson}.

\begin{figure}[htb]
\begin{center}
\includegraphics[scale=0.25]{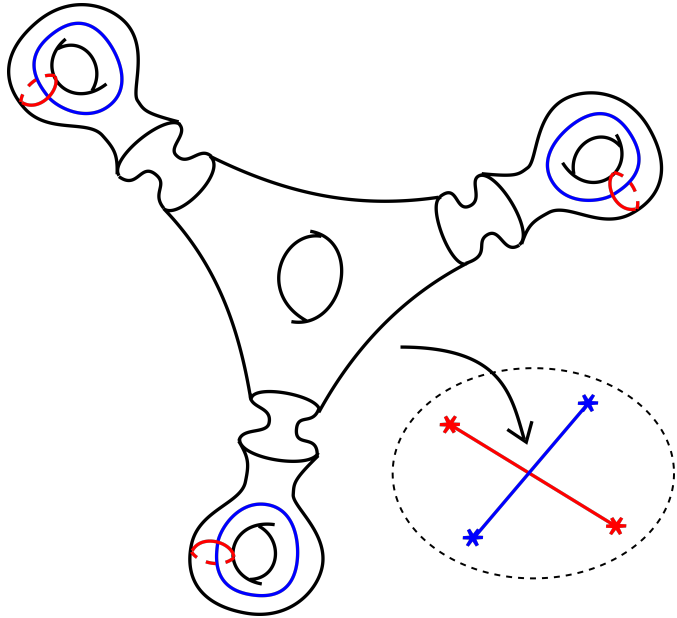}
% or [scale=0.85]
\caption{Fibration with total space $\Sigma_G$; each of the critical values corresponds here to a disjoint union of three vanishing cycles, and comes from ``pulling together'' three Morse critical points.}
\label{fig:Bridson}
\end{center}
\end{figure}

Now the $G$--action on the central fibre readily extends to the total space, fixing each fibre set-wise. Proceed iteratively to get equivariant Lefschetz stabilisations in arbitrary dimensions.
\end{proof}

\begin{lma}\label{lma:simply_connected}
Consider the Lefschetz stabilisation contructed in Lemma \ref{lma:equivariant_stabilisation}, say $M_G$. There exists a simply connected Weinstein domain $M'_G$ and an exact open embedding $M_G \subset M'_G$ such that the action of $G$ on $M_G$ extends to an action on $M'_G$, also by strictly exact symplectomorphisms.
\end{lma}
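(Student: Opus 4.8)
The plan is to make the simply-connectedness failure of $M_G$ visible as a finite collection of explicit loops, and then to kill each of them by an equivariant handle attachment. First I would observe that $\pi_1(M_G)$ is generated by loops of two types: loops coming from $\pi_1$ of the central fibre $\Sigma_G$, and loops coming from the base (but these latter are killed by the Lefschetz thimbles, as usual, since the base is a disc). So it suffices to kill the image of $\pi_1(\Sigma_G)$. Now $\pi_1(\Sigma_G)$ is built from $\pi_1(S)$, the $\pi_1(\Sigma_g)$, and the boundary-connect-sum one-handles; a generating set can be chosen so that $G$ permutes the generators coming from the $\Sigma_g$ pieces (carrying $\Sigma_{g}$ to $\Sigma_{g'g}$ by left translation) and acts on the generators of $\pi_1(S)$ through its realisation as a surface symmetry group.

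The key step is then: for each $G$-orbit of generating loops $\{\gamma, h\cdot\gamma, h^2\cdot\gamma,\dots\}$, represent $\gamma$ by an embedded exact Lagrangian arc in $\Sigma_G$ (using that any embedded arc can be rectified near the boundary, as in Lemma \ref{lma:arc_detection}), and arrange — by shrinking $\Sigma$ enough inside the realisation and choosing the disc-deletions in $S$ appropriately — that this arc can be moved to lie in a region where the $G$-action is free, so that $\gamma, h\cdot\gamma,\dots$ are represented by \emph{pairwise disjoint} arcs. Attaching a one-handle simultaneously along this whole disjoint $G$-orbit of arcs produces a new exact surface on which $G$ still acts (permuting the new handles) and on which the class of $\gamma$, and hence of each $h^j\cdot\gamma$, is now nullhomotopic. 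Doing this over a finite generating set of orbits yields a simply connected surface $\tilde\Sigma_G$ with an exact $G$-action and an exact embedding $\Sigma_G\subset\tilde\Sigma_G$. Then, exactly as in Lemma \ref{lma:equivariant_stabilisation}, I would take the equivariant Lefschetz stabilisation of $\tilde\Sigma_G$ (relative to the enlarged collection of vanishing cycles, ordered so that $G$ permutes them) of the appropriate length; call it $M'_G$. Since its central fibre $\tilde\Sigma_G$ is simply connected and the base is a disc, $M'_G$ is simply connected, and the $G$-action extends fibrewise as before. The inclusion $M_G\subset M'_G$ is the one induced on total spaces by $\Sigma_G\subset\tilde\Sigma_G$, hence exact and open; and the extended $G$-action is strictly exact because the handle-attachment and stabilisation constructions can be performed so that the Liouville form is unchanged outside neighbourhoods of the new handles, where equivariance is arranged by the averaging already used for $\Sigma_G$.

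The main obstacle I expect is the disjointness arrangement in the key step: one needs the different translates $h^j\cdot\gamma$ of a chosen arc to be realised by mutually disjoint embedded arcs, so that attaching a $G$-orbit of one-handles along them makes sense and remains $G$-equivariant. This requires choosing the original embedding of $\Sigma$ into each $\Sigma_g$, and the deleted discs in $S$, with enough room that every generating loop has a representative supported in a fundamental-domain-sized region on which $G$ acts freely — in other words, pushing the argument of \cite[Section 5]{Bridson} through in the exact symplectic category while tracking where the $G$-action is free. Once that is set up, the rest (nullhomotopy after handle attachment, extending the action to the stabilisation, exactness and strict exactness, simple-connectedness of the total space) is routine and parallels the proofs of Lemmas \ref{lma:equivariant_stabilisation} and the simply-connected remarks in the proof of Proposition \ref{prp:dim_equalities_id}.
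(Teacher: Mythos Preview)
Your proposal contains a genuine error at the key step. Attaching one-handles to a surface does \emph{not} kill homotopy classes of loops: a $2$-dimensional $1$-handle adds a free generator to $\pi_1$, it never introduces a relation. In particular, your claim that ``attaching a one-handle \ldots\ produces a new exact surface \ldots\ on which the class of $\gamma$ \ldots\ is now nullhomotopic'' is false, and so is the conclusion that $\tilde\Sigma_G$ is simply connected. A connected surface with nonempty boundary is simply connected only if it is a disc; no sequence of $1$-handle attachments to $\Sigma_G$ will produce that. (The one-handle attachments used elsewhere in the paper, e.g.\ in the proof of Proposition~\ref{prp:dim_equalities_id}, serve a completely different purpose: they close up arcs into $S^1$s, and certainly enlarge $\pi_1$.) Relatedly, representing a generator of $\pi_1(\Sigma_G)$ by a Lagrangian \emph{arc} does not make sense: generators are loops, not properly embedded arcs.

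The paper's argument is quite different and much simpler. It does not modify the fibre $\Sigma_G$ at all. Instead, it chooses exact embedded \emph{closed} curves $w_1,\ldots,w_h$ in $\Sigma$ whose conjugacy classes generate $\pi_1(\Sigma)$, and exact embedded closed curves $s_1,\ldots,s_l$ in $S$ whose conjugacy classes generate $\pi_1(S)$; it then takes all $G$-translates $w_p^g$, $s_r^g$ and throws them in as \emph{additional vanishing cycles} alongside the $v_q^g$, building the equivariant Lefschetz stabilisation of $(\Sigma_G,\{w_p^g,v_q^g,s_r^g\})$ exactly as in Lemma~\ref{lma:equivariant_stabilisation}. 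The point is that in the total space each vanishing cycle bounds a Lefschetz thimble, hence becomes nullhomotopic; since the $w_p^g$ and $s_r^g$ normally generate $\pi_1(\Sigma_G)$, the total space $M'_G$ is simply connected. The $G$-equivariance and strict exactness come for free from the construction of Lemma~\ref{lma:equivariant_stabilisation}, and the inclusion $M_G\subset M'_G$ is the obvious sub-Lefschetz-fibration with fewer critical points.
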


\begin{proof}
$M_G$, the total space of the fibration we've constructed in the proof of Lemma \ref{lma:equivariant_stabilisation}, is not simply connected: classes from $\pi_1(S)$, and possibly $\pi_1(\Sigma_g)$, survive. Let $w_1, \ldots, w_h$ be exact embedded $S^1$s in $\Sigma$ whose (conjugacy) classes generate $\pi_1(\Sigma)$, and $w_i^g \subset \Sigma_g$ as before. Similarly, let $s_1^e, \ldots, s_l^e$ be exact embedded $S^1$s in $S$ whose (conjugacy) classes generate $\pi_1(S)$, and set $s_i^g = g(s_i^e)$ for all $g \in G$ ($e$ denotes the identity in $G$). 

Now construct a $G$-equivariant Lefschetz stabilisation of 
$$
\big( \Sigma_G, \{ w_p^g, v_q^g, s_r^g \, | \, p=1, \ldots h, q = 1, \ldots, k, r=1, \ldots l, g \in G \} \big)
$$
as in the proof of Lemma \ref{lma:equivariant_stabilisation}.
\end{proof}

\begin{thm}\label{thm:virtual_RAAGs}
Suppose that some group $H$ virtually embeds into a right angled Artin group $\Gamma$. Then, for any $n \geq 2$, there exist infinitely many simply connected $2n$--dimensional Weinstein domains $M$ such that $H$ embeds into $\AutFuk(M)$. 
\end{thm}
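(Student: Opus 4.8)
The plan is to combine the Kaloujnine--Krasner embedding with the equivariant Lefschetz stabilisations of Lemmas \ref{lma:equivariant_stabilisation} and \ref{lma:simply_connected}, together with Corollary \ref{cor:RAAGs}, passing through wreath products. Write $H_0 \lhd H$ for a finite-index normal subgroup with $H_0 \hookrightarrow \Gamma$, and let $G = H/H_0$ be the (finite) quotient. By Kaloujnine--Krasner, $H$ embeds into $\Gamma \wr G = G \ltimes \prod_{g \in G} \Gamma_g$. Since $\Gamma$ is a RAAG, say $\Gamma = A(\Lambda)$, the product $\prod_{g\in G} \Gamma_g$ is itself the RAAG associated to $|G|$ disjoint copies of $\Lambda$; call it $\Gamma^{(G)} = A(\Lambda^{(G)})$. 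So it suffices to realise $\Gamma^{(G)} \rtimes G$ (a group containing $H$) inside $\AutFuk(M)$ for a simply connected Weinstein $M$ of each even dimension $\geq 4$.

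First I would build the surface. Take the plumbing-type construction used for Corollary \ref{cor:RAAGs}: pick a real two-dimensional Liouville domain $\Sigma$ carrying a collection of exact Lagrangian $S^1$s $v_1, \ldots, v_k$ whose coincidence graph is $\Lambda$ and whose classes generate $\pi_1(\Sigma)$, irredundantly. Now apply the $\Sigma_G$ construction: $|G|$ copies $\Sigma_g$ boundary-connect-summed onto the punctured surface $S$ realising $G$ as a group of symmetries, so that $G$ acts on $\Sigma_G$ by exact symplectomorphisms permuting the copies $\Sigma_g$ and hence permuting the $v_i^g$ according to the left-translation action of $G$ on itself. The coincidence graph of the full collection $\{ v_i^g \}$ on $\Sigma_G$ is exactly $\Lambda^{(G)}$ (curves in different copies $\Sigma_g$, $\Sigma_{g'}$ are disjoint; within a copy the graph is $\Lambda$), and they remain irredundant. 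Then invoke Lemma \ref{lma:equivariant_stabilisation} to obtain, in any even dimension, a Lefschetz stabilisation $M_G$ of $(\Sigma_G, \{ v_i^g \})$ carrying a strictly exact $G$-action extending that on $\Sigma_G$, and Lemma \ref{lma:simply_connected} to enlarge it to a simply connected Weinstein domain $M'_G \supset M_G$ on which $G$ still acts by strictly exact symplectomorphisms. (Infinitely many such $M'_G$ are produced by varying the auxiliary choices, e.g.\ adding further curves / handles, exactly as in Corollary \ref{cor:RAAGs}.)

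Next I would identify the subgroup of $\AutFuk(M'_G)$. By Proposition \ref{prp:embed_RAAG} applied to the irredundant collection $\{ v_i^g \}$ on $\Sigma_G$ and the stabilisation $M'_G$ (after a further enlargement if the proposition requires one, which we absorb into $M$), there is $N$ so that for $n \geq N$ the powers $\{ \tau_{V_i^g}^n \}$ form a right-angled Artin system for a copy of $\Gamma^{(G)} = A(\Lambda^{(G)})$ inside $\AutFuk(M)$. On the other hand, the strictly exact $G$-action on $M = M'_G$ induces an action of $G$ on $\Fuk(M)$ — here one uses that a strictly exact symplectomorphism sends exact Lagrangians to exact Lagrangians and acts on Floer complexes — hence a homomorphism $G \to \AutFuk(M)$. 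Because the $G$-action on $M_G$ permutes the spheres $V_i^g$ by $h \cdot V_i^g = V_i^{hg}$, conjugation by the image of $h \in G$ carries $\tau_{V_i^g}^n$ to $\tau_{V_i^{hg}}^n$; thus $G$ normalises $\Gamma^{(G)} \subset \AutFuk(M)$ and the combined subgroup is a quotient of the semidirect product $\Gamma^{(G)} \rtimes G$ with $G$ acting as it does on $\prod_{g} \Gamma_g$ by translation. To see it is actually isomorphic to the full semidirect product — equivalently that $G \to \AutFuk(M)$ is injective and meets $\Gamma^{(G)}$ trivially — one argues that a nontrivial $h \in G$ moves some $\Sigma_g$ to a different copy, hence moves $v_i^g$ to a non-isotopic curve $v_i^{hg}$, so the induced autoequivalence is nontrivial and cannot lie in the RAAG (which preserves the quasi-isomorphism classes of the $V_i^g$ only up to the twists it is generated by); a clean way is to note the action on the finite set of quasi-isomorphism classes $\{[V_i^g]\}$ detects $G$ while $\Gamma^{(G)}$ acts trivially on that set. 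Therefore $\Gamma^{(G)} \rtimes G \cong \Gamma \wr G$ embeds in $\AutFuk(M)$, and restricting along Kaloujnine--Krasner gives $H \hookrightarrow \AutFuk(M)$. Running this for each $n \geq 2$ and each of the infinitely many choices of $M$ finishes the proof.

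The main obstacle, and the step deserving the most care, is the last one: showing that the map $\Gamma^{(G)} \rtimes G \to \AutFuk(M)$ is injective, i.e.\ that the geometric $G$-action on $M'_G$ induces a \emph{faithful} action on the Fukaya category that moreover intersects the RAAG trivially. Faithfulness of the $G$-action on $\Fuk$ is not automatic from faithfulness on $M$, so one must exhibit, for each nontrivial $h\in G$, a concrete invariant (such as the permutation of the finite set of isotopy/quasi-isomorphism classes of the Lagrangians $V_i^g$, which survives into $\AutFuk$) that $h$ acts on nontrivially while all of $\Gamma^{(G)}$ acts trivially. One also has to be slightly careful that the enlargements of $M_G$ to $M'_G$ (Lemma \ref{lma:simply_connected}) and any further enlargement forced by Proposition \ref{prp:embed_RAAG} can be performed $G$-equivariantly and strictly exactly simultaneously — this is exactly why the auxiliary curves $s_i^g$, $w_i^g$ in Lemma \ref{lma:simply_connected} were taken in $G$-orbits, and the same device handles the Proposition \ref{prp:embed_RAAG} enlargement. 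Everything else is bookkeeping: assembling $\prod_g \Gamma_g$ as a RAAG, matching coincidence graphs, and quoting Corollary \ref{cor:RAAGs} / Proposition \ref{prp:embed_RAAG} verbatim.
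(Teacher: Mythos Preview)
Your approach is essentially the paper's: Kaloujnine--Krasner reduces the problem to embedding the wreath product $\Gamma \wr G$, and you build this via the $G$--equivariant stabilisation of $\Sigma_G$ from Lemmas \ref{lma:equivariant_stabilisation} and \ref{lma:simply_connected}, embedding $\prod_g \Gamma_g$ as a RAAG via Proposition \ref{prp:embed_RAAG}, and then letting $G$ act by the geometric symmetry. That skeleton is correct and matches the paper.

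The one genuine error is in your ``clean way'' to verify injectivity of $\Gamma^{(G)} \rtimes G \to \AutFuk(M)$. You claim that $\Gamma^{(G)}$ acts trivially on the finite set of quasi-isomorphism classes $\{[V_i^g]\}$. This is false: the generators $\tau_{V_i^g}^n$ do \emph{not} preserve that set --- if $v_i$ and $v_j$ intersect in $\Sigma$, then $\tau_{V_i^g}^n(V_j^g)$ is a new object, not quasi-isomorphic to any $V_k^{g'}$. So the permutation-of-classes invariant you propose does not separate $G$ from $\Gamma^{(G)}$.

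The paper's (terse) argument is purely group-theoretic and avoids this: once you know $\prod_g \Gamma_g \hookrightarrow \AutFuk(M)$ is injective, the $\Gamma_g$ are pairwise-commuting, pairwise-distinct, nontrivial subgroups of $\AutFuk(M)$. Any inner automorphism of $\prod_g \Gamma_g$ preserves each factor $\Gamma_g$, whereas conjugation by a nontrivial $h \in G$ sends $\Gamma_g$ to $\Gamma_{hg} \neq \Gamma_g$. Hence the image of $h$ cannot lie in the image of $\prod_g \Gamma_g$, which gives injectivity of the semidirect product. Alternatively, a corrected geometric version of your idea also works: each $\gamma \in \prod_g \Gamma_g$ has support in the disjoint union of the subdomains $\tilde{M}_g$ and preserves each one, so $\gamma(V_i^g)$ remains a Lagrangian in $\tilde{M}_g$; then $h\gamma(V_i^g) \subset \tilde{M}_{hg}$ is disjoint from $V_i^g$ whenever $h \neq e$, hence not quasi-isomorphic to it. Either of these replaces your flawed step; the rest of your proof goes through.
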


\begin{rmk}
Each element of $H$ can be realised as an exact symplectomophism of $M$ (recall an exact symplectomorphism $h$ satisfies $h^\ast \theta = \theta + df$, some function $f$ with support on the interior of $M$), though in our construction they do not all have compact support. 
\end{rmk}

\begin{proof}
Roughly speaking, this is a higher-dimensional version of Bridson's argument in \cite[Proposition 5.1]{Bridson}. 

Start with an exact surface $\tilde{\Sigma}$ and exact Lagrangians $v_1, \ldots, v_k$ such that for any Lefschetz stabilisation of $(\tilde{\Sigma}, \{ v_1, \ldots, v_k \})$, say $(\tilde{M}, \{ V_1, \ldots, V_k \} )$, the RAAG $\Gamma$ embeds into $\AutFuk (\tilde{M})$, constructed in Proposition \ref{prp:embed_RAAG}.

Say that $H_0 \lhd H$ embeds into $\Gamma$, and let $G = H / H_0$. 
Now consider $(\tilde{\Sigma}_G, \{ v^g_i \} )$ as above, together with its $G$--equivariant Lefschetz stabilisation given by  Lemma \ref{lma:equivariant_stabilisation}, say $(\tilde{M}_G, \{ V^g_i \} )$. For each $g \in G$, the Lefschetz stabilisation of $(\tilde{\Sigma}_g, \{ v_1^{g}, \ldots, v_k^g \} )$, say $(\tilde{M}_g, \{ V_1^{g}, \ldots, V_k^g \} )$
 naturally sits inside  $(\tilde{M}_G, \{ V^g_i \} )$; $\tilde{M}_g$ is a Stein subdomain of $\tilde{M}_G$, and the $\tilde{M}_g$ are disjoint for different $g \in G$. 

For each $g \in G$, let $\Gamma_g$ be an isomorphic copy of $\Gamma$.
We claim that the 
 direct product $\prod_{g \in G} \Gamma_g$ embeds into $\AutFuk (\tilde{M}_G)$. This can be viewed as a special case of Proposition \ref{prp:embed_RAAG}, as $\prod_{g \in G} \Gamma_g$ is itself a RAAG.
 By construction, if that map $\prod_{g \in G} \Gamma_g \to \AutFuk (\tilde{M}_G)$ isn't injective, then the map $\prod_{g \in G} \Gamma_g \to \prod_{g \in G} \pi_0 \, \Symp^c (\Sigma_g)$ isn't either -- however, we know that the latter map \emph{is} injective.

On the other hand, the action of $G$ on $\tilde{M}_G$ %induces a faithful action of $G$ on $\Fuk(\tilde{M}_G)$, which 
permutes the $V_i^g$ for fixed value of $i$; thus conjugation by $G$ permutes the $\Gamma_g \subset \AutFuk(\tilde{M}_G)$, and the canonical map $G \ltimes  \prod_{g \in G} \Gamma_g \to \AutFuk(\tilde{M}_G)$ is injective.  

To complete the proof, it suffices to pass to a simply connected Weinstein domain, say $\tilde{M}'_G$ with $\tilde{M}_G \subset \tilde{M}'_G$, as in Lemma \ref{lma:simply_connected}. 
\end{proof}

\begin{rmk}
One could use Theorem \ref{thm:virtual_RAAGs} to import further undecideability results from geometric group theory, see e.g.~\cite[Theorem B]{BridsonWilton}.
\end{rmk}

\bibliography{bib}{}
\bibliographystyle{alpha}

\end{document}